\documentclass[12pt,reqno]{amsart} 

\usepackage[T1]{fontenc}
\usepackage[utf8]{inputenc}

\usepackage{amsmath,amssymb,amsthm}
\usepackage{mathrsfs,bbm,amsfonts}
\usepackage{mathtools,latexsym}

\usepackage{graphicx,xcolor}
\usepackage{float}

\usepackage{enumerate}
\usepackage{cite}
\usepackage[margin=3cm, a4paper]{geometry}

\usepackage{microtype} 
\allowdisplaybreaks[2]

\usepackage[colorlinks=true,backref]{hyperref}

\usepackage{cite}
\usepackage[margin=3cm, a4paper]{geometry}

\newcommand{\EQ}[1]{\begin{align*}\begin{split} #1 \end{split}\end{align*}}
\newcommand{\EQn}[1]{\begin{align}\begin{split} #1 \end{split}\end{align}}

\def\HH{{\mathcal H}}
\def\F{\mathscr{F} }
\def\N{\mathbb{N}}
\def\R{\mathbb{R}}
\def\Z{\mathbb{Z}}

\def\C{\mathbb{C}}

\def\mS{\mathcal{S}}
\def\mW{\mathcal{W}}
\def\H2{H^2(\R)}
\def\L2{L^2(\R)}
\def\to{\rightarrow}

\def\wh{\widehat}

\newcommand{\dx}{\,\mathrm{d}x}
\newcommand{\dy}{\,\mathrm{d}y}
\newcommand{\ds}{\,\mathrm{d}s}
\newcommand{\dt}{\,\mathrm{d}t}
\newcommand{\dta}{\,\mathrm{d}\tau}
\newcommand{\dxi}{\,\mathrm{d}\xi}
\newcommand{\dxio}{\,\mathrm{d}{\xi}_{1}}
\newcommand{\deta}{\,\mathrm{d}\eta}

\def\norm#1{\left\|#1\right\|}

\def\absb#1{\big|#1\big|}

\def\brk#1{\left(#1\right)}

\def\mbrkb#1{\big[#1\big]}

\def\fbrk#1{\left\lbrace#1\right\rbrace}

\def\jb#1{\langle#1\rangle} \def\norm#1{\|#1\|}

\def\H1{H^1(\R)}

\newcommand{\les}{\lesssim} \newcommand{\ges}{\gtrsim}
\newcommand{\wt}{\widetilde}

\newcommand{\al}{\alpha} 
\newcommand{\ga}{\gamma}

\newcommand{\pd}{\partial}
\newcommand{\De}{\Delta}

\newcommand{\p}{\partial} \newcommand{\na}{\nabla}
\newcommand{\re}{\mathop{\mathrm{Re}}}
\newcommand{\im}{\mathop{\mathrm{Im}}}

   \newcommand{\I}{\infty}

\newcommand{\ra}{\rightarrow}

 \newcommand{\Del}[1]{}

\numberwithin{equation}{section}

\newtheorem{thm}{Theorem}[section]
\newtheorem{cor}[thm]{Corollary}
\newtheorem{lem}[thm]{Lemma}

\newtheorem{prop}[thm]{Proposition}
\newtheorem{assu}[thm]{Assumption}
\newtheorem{definition}[thm]{Definition}
\theoremstyle{remark}
\newtheorem{remark}[thm]{Remark}
\newtheorem*{exam*}{Examples}

\title[Zakharov]{The subsonic limit of the 3D Zakharov system}

\author{Rui Jia}
\address{(R. Jia) Center for Applied Mathematics\\
	Tianjin University\\
	Tianjin 300072, China}
\email{jiarui0305@tju.edu.cn}

\author{Jia Shen}
\address{(J. Shen) School of Mathematical Sciences and LPMC\\
	Nankai University\\
	Tianjin 300071, China}
\email{shenjia@nankai.edu.cn}

\author{Yifei Wu}
\address{(Y. Wu)  School of Mathematical Sciences\\
	Nanjing Normal University\\
	Nanjing 210046, China}
\email{yerfmath@gmail.com}

\begin{document}
	
\begin{abstract}
We obtain the optimal convergence rates in the subsonic limit of the three-dimensional Zakharov system for initial data belonging to the low-regularity Sobolev space
$\HH^s=H^s\times H^{s-1}\times H^{s-1}$. For the Schr\"odinger component,  we prove first-order convergence in $L^2$ for initial data in $\HH^3$, and second-order convergence under the compatibility condition for data in $\HH^4$. For the wave component, we obtain first-order convergence in $L^2$ for data in $\HH^3$ and second-order convergence for data in $\HH^4$.

The obtained rates are optimal and coincide with those predicted by the formal asymptotic expansion. No localization assumptions, smallness or high-order regularity hypotheses are required. This improves all previous results on the subsonic limit of the Zakharov system and resolves the optimality issue at the Sobolev regularity level.

The proof relies on a uniform local well-posedness theory that remains valid in the subsonic limit. A key ingredient is a refined normal form analysis combined with bilinear Strichartz estimates in atomic function spaces, which allows us to fully exploit the dispersive structure of the Zakharov system at low regularity and to overcome the derivative losses arising from the singular coupling. 

%
%
%

	\end{abstract}
	
	\maketitle

	\tableofcontents	
	\section{Introduction}
	We consider the three-dimensional Zakharov system: 
	\begin{align}
		\label{equ-Z-al}\tag{$\text{Zak}$}
		\left\{
		\begin{aligned}
			&2i\partial_{t}u^{(\al)}-\Delta u^{(\al)}=-n^{(\al)}u^{(\al)}, \\
			&\al^{-2}\partial_{tt}n^{(\al)}-\Delta n^{(\al)}= \Delta|u^{(\al)}|^2, \\
			& u^{(\al)}(0,x)=u_0^{(\al)}, \quad n^{(\al)}(0,x)=n_0^{(\al)}, \quad \partial_tn^{(\al)}(0,x)=n_1^{(\al)},
		\end{aligned}
		\right.
	\end{align}
	where $u^{(\al)}:\R^{1+3}\rightarrow\C$ is a complex-valued function representing the varying envelope of a highly oscillatory electric field, $ n^{(\al)}:\R^{1+3}\rightarrow\R$ is a real-valued function describing the fluctuation of the plasma ion density from its equilibrium state, and $\alpha>0$ is the ion sound speed. It is well known that the system \eqref{equ-Z-al} preserves mass and energy:
	\begin{align}
		\label{mass}
		M(t)&\coloneqq \|u^{(\al)}(t,\cdot)\|_{L_x^2}=M(0),
		\\
		\label{energy}
		E(t)&\coloneqq  \int_{\R^3}|\nabla u^{(\al)}(t)|^2+\frac{1}{2}|n^{(\al)}(t)|^2+\frac{1}{2}||\al\na|^{-1}\partial_t n^{(\al)}(t)|^2+n^{(\al)}(t)|u^{(\al)}(t)|^2 \dx = E(0).
	\end{align} 
	
	The Zakharov system \eqref{equ-Z-al} was introduced by Zakharov \cite{zakharov1972} as a classical model for Langmuir turbulence in an unmagnetized, completely ionized plasma, based on a two-fluid description of plasma dynamics. It couples the ion acoustic wave equation with the evolution equation for the slowly varying envelope of the Langmuir electric field through nonlinear interaction terms. In suitably scaled variables, the system can be derived from Maxwell's equations together with linearized hydrodynamic equations. A derivation based on a Lagrangian formalism was later given by Gibbons, Thornhill, Wardrop, and ter Haar \cite{Gibbons_Thornhill_Wardrop_Haar_1977}.

	In the subsonic limit $\alpha \rightarrow \infty$, the system \eqref{equ-Z-al} formally reduces to the focusing cubic nonlinear Schr\"odinger equation:
	\begin{align}
	\label{equ-NLS}\tag{NLS}
		\left\{
		\begin{aligned}
			&2i\partial_{t}v-\Delta v=|v|^2v,  \\
			& v(0,x)\coloneqq v_0.
		\end{aligned}
		\right.
	\end{align}
Physically, this corresponds to the plasma responding almost instantaneously to variations in the electric field. Mathematically, the subsonic limit poses significant challenges, requiring precise control of nonlinear interactions and careful handling of the differing scales of dispersive and acoustic components.

Schochet and Weinstein \cite{Weinstein1986} first proved the convergence of the subsonic limit in 2D and 3D rigorously. For initial data $(u_0^{(\al)},n_0^{(\al)},|\nabla|^{-1}n_1^{(\al)})\in H^{m+1}\times H^m\times H^{m}(\R^d)$ with high regularity $m\ge[d/2]+3$, they prove that under the convergence condition
\EQ{
u_0^{(\al)}\ra v_0\text{ in }H^{m+1}; \quad \|\na(n_0^{(\al)}+|u_0^{(\al)}|^2)\|_{H^{m-1}}\les\al^{-1},
} then $(u^{(\al)},n^{(\al)}+|u^{(\al)}|^2)\ra(v,0)$ as $\al\ra\I$, where $v$ is the solution of \eqref{equ-NLS}.
	
Next, the explicit convergence rate on $\al$ was first identified by Added and Added \cite{Added1988} for dimensions $d=1,2,3$. They also made the initial-layer correction explicit. More precisely, when
\EQ{
n_0+|u_0|^2=0,
}
which is the so-called compatibility condition on the initial data, one expects
\EQ{
n^{(\al)}+|u^{(\al)}|^2\ra0.
}
In contrast, when the initial data is incompatible, namely 
\EQ{
n_0+|u_0|^2\ne0,
}
then one should instead expect 
\EQ{
n^{(\al)}+|u^{(\al)}|^2 - Q\ra 0,
}
where the fitting corrector $Q$ denotes the solution of some linear wave equation. Under high-regularity Sobolev assumptions, together with a smallness condition on the initial data in the two- and three-dimensional cases, they proved that for incompatible initial data, the convergence rate of both Schr\"odinger and wave components 
\EQ{
(u^{(\al)}-v,n^{(\al)}+|u^{(\al)}|^2-Q)
}
converge at rate $\al^{-\frac12}$ when $d=1,2$, and at rate $\al^{-1}\log(\al)$ when $d=3$. For compatible initial data, the convergence rate improves to $\al^{-1}$. However, their paper did not determine whether the rates  $\al^{-1}\log(\al)$ and $\al^{-1}$ are optimal.

A natural question arising from the above results concerns the optimality of the convergence rates. The first systematic investigation in this direction was carried out by Ozawa and Tsutsumi\cite{Ozawa1992}. One of their key observations is that the Schr\"odinger and wave components exhibit fundamentally different convergence behaviors. More precisely, they observed that the two convergences
$$
u^{(\al)}\to v \quad \mbox{and } \quad  n^{(\al)}+|u^{(\al)}|^2\to Q \quad (\mbox{for some  fitting corrector}),
$$
exhibit different behavior. For Schwartz initial data, without any smallness assumption, and under the additional condition $n_1^{(\al)}\in \dot H^{-1}$ in dimensions $d=1,2,3$, they proved that the convergence rate for the Schr\"odinger component $u^{(\al)}\to v$ is $\al^{-1}$ in the incompatible case and improves to $\al^{-2}$ in the compatible case. Moreover, both rates were shown to be optimal.  While for the wave component,  $n^{(\al)}+|u^{(\al)}|^2\to Q$, regardless of whether the data are compatible or incompatible, the convergence rate was proved as  $\al^{-1}$ in $L^2$ and $\al^{-2}$ in some local-in-spatial space  
$$
H^{m,-s}=\{f\in \mathcal S': \brk{1+|x|^2}^{-s/2}\brk{1-\De}^{m/2}f\in L^2 \}\quad \mbox{for some large } m, s>0.
$$
Moreover, the $\al^{-2}$-rate was proved to be optimal.  However, it remains an open problem whether the optimal $\al^{-2}$ convergence rate also holds in the global $L^2$ topology. Later Kenig, Ponce, and Vega \cite{Kening1995} removed the auxiliary assumption  $n_1^{(\al)}\in \dot H^{-1}$ and established the optimal convergence rates for the  Schr\"odinger  component under suitable high-regularity, spatial decay, and smallness assumptions on the initial data. 

In summary, previous optimal-rate results were proved either for Schwartz data (see \cite{Ozawa1992}), or for high-regularity weighted Sobolev data (see \cite{Kening1995}). However, without quantitative rates, the convergence in the subsonic limit for Zakharov system can be achieved at very low regularity. Masmoudi and Nakanishi \cite{Masmoudi2005} proved that
\EQ{
(u^{(\al)}-v, n^{(\al)} + |u^{(\al)}|^2-Q^{(1)}-Q^{(2)})\ra 0\quad\text{ in }H^s \times H^{s-1}\text{ with }s>\frac32,
}
on the maximal existence time of the limiting Schr\"odinger equation \eqref{eq:NLS}, provided the initial data satisfies $\|P_{\al}(n_0^{(\al)},|\al\na|^{-1}n_1^{(\al)})\|_{H^{s-1}}\rightarrow0$. Afterwards, they \cite{Masmoudi2008} further improved the regularity to energy-level $H^1 \times L^2$, provided that
\EQ{
	\lim_{R\ra\I}\lim\sup_{\al\rightarrow\infty}\|P_{>R}(n_0^{(\al)},|\al\na|^{-1}n_1^{(\al)})\|_{L^2}=0.
}
Notice that in these qualitative results, the convergence of the initial data and the convergence of the solutions are measured at essentially the same Sobolev regularity level. However, when studying the quantitative rate of convergence, see \cite{Added1988,Ozawa1992,Kening1995}, the initial data require more regularity than the solution.

Moreover, the limit for Zakharov-type systems and other related models has also been extensively investigated, see \cite{Masmoudi2005, Masmoudi2008, Masmoudi2010,masmoudi2003nonrelativistic,machihara2001nonrelativistic, machihara2002nonrelativistic, masmoudi2002nonlinear,Lu2025} for instance. 
	

Despite substantial progress, it remains unknown whether optimal convergence rates for the subsonic limit can be established at the purely Sobolev level. More specifically, existing optimal-rate results rely either on weighted spaces, local-in-space norms for the wave component, or additional decay and smallness assumptions. The main goal of this paper is to remove all of these restrictions in the three-dimensional setting.
	
	Before stating our main result, we give some basic settings. For simplicity, we omit the superscript \(\alpha\) from the corresponding solution and write
	\begin{align}
		\label{equ-Z}\tag{$\text{Zak}_\al$}
		\left\{
		\begin{aligned}
			&2i\partial_{t}u-\Delta u=-nu, \\
			&\al^{-2}\partial_{tt}n-\Delta n= \Delta|u|^2, \\
			& u(0,x)=u_0, \quad n(0,x)=n_0, \quad \partial_tn(0,x)=n_1.
		\end{aligned}
		\right.
	\end{align} 
	We also consider the relationship between the convergence rate and the incompatibility condition $n_0+|u_0|^2\neq 0$. A fitting corrector is given by the function $w(t,x)$, where $w$ is the solution of the wave equation
		\begin{align}
		\label{equ-w}\tag{Wave}
		\left\{
		\begin{aligned}
			&\al^{-2}\partial_{tt}w-\Delta w= 0,  \\
			& w(0) \coloneqq w_0= n_0+|u_0|^2, \quad \pd_tw(0) \coloneqq w_1= n_1+\partial_t|u|^2(0).
		\end{aligned}
		\right.
	\end{align}
	Throughout this paper, we use the  notation:
	\EQn{\label{defn:initial-data}
		\HH^s\coloneqq & H^s \times H^{s-1}\times H^{s-1}(\R^3)\text{, for every $s\ge1$};\\
		\vec u_0\coloneqq  &(u_0, n_0, |\al\nabla|^{-1}n_1);\\\vec u(t)\coloneqq  & (u(t), n(t), |\al\nabla|^{-1}n_t(t)).
	}

Now, we first state the uniform local theory for solutions to the Zakharov system.
\begin{prop}[Uniform local theory of \eqref{equ-Z}]
\label{prop:lwpZ}
Let $l \geq 2$ be an integer. For any initial data $\vec u_0 \in \HH^{l+1}$, there exists $T$ that only depends on $\left\| \vec u_0 \right\|_{\HH^3}$ such that the Zakharov system \eqref{equ-Z} admits a unique solution  $\vec u \in C\big([0,T];\HH^{l+1}\big)$. Moreover, the solution satisfies the following estimate:
\begin{align}
\label{lwpZ}
\| \vec u\|_{L_t^\I([0,T];\HH^{l+1})}   \leq C\|\vec u_0\|_{\HH^{l+1}},
\end{align}
where $C >0$ depends only  on $\|u_0\|_{H^{l-1}}$ and is independent of $\al.$
\end{prop}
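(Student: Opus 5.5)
We would prove Proposition~\ref{prop:lwpZ} by reducing \eqref{equ-Z} to a first-order system and running a contraction argument in atomic spaces at the $\HH^3$ level. All constants must be uniform in $\al\ge1$. Higher regularity $\HH^{l+1}$ then follows by persistence on the same time interval.

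\textbf{Reformulation and spaces.} Set $N=n-i|\al\nabla|^{-1}\partial_t n$. Then
\[
i\partial_t N+\al|\nabla|N=\al|\nabla||u|^2,\qquad 2i\partial_t u-\Delta u=-(\re N)u .
\]
The wave propagator is $e^{it\al|\nabla|}$, and the Duhamel term carries the singular factor $\al|\nabla|$. We place $u$ in $U^2_{S}$/$V^2_S$-type spaces adapted to $e^{-it\Delta/2}$, and $N$ in $V^2$-type spaces adapted to $e^{it\al|\nabla|}$, with Sobolev weights $\langle\nabla\rangle^{3}$ and $\langle\nabla\rangle^{2}$ respectively. The aim is bilinear and trilinear estimates whose constants are independent of $\al$ on a time interval $[0,T]$, where $T$ depends only on $\|\vec u_0\|_{\HH^3}$.

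\textbf{Normal form.} The difficulty is the $\al$ in front of $|\nabla||u|^2$. We decompose $|u|^2=\sum P_{k_1}u\,\overline{P_{k_2}u}$ by dyadic frequencies, with output frequency $\xi$. The phase is
\[
\Phi=\al|\xi|\pm\tfrac12(|\xi_1|^2-|\xi_2|^2),\qquad \xi=\xi_1-\xi_2 .
\]
In the nonresonant region, where $|\xi_1|+|\xi_2|\ll\al$ or the Schrödinger phase is small compared to $\al|\xi|$, we have $|\Phi|\gtrsim\al|\xi|$. There we integrate by parts in time (a normal form). This exchanges $\al|\nabla|\,|u|^2$ for a boundary term $\al|\nabla|\Phi^{-1}(u,\bar u)$, which is bounded uniformly in $\al$, plus cubic terms obtained by substituting $\partial_t u=\frac{i}{2}(\Delta u+\re N\, u)$. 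The $\Delta u$ part is already absorbed into the phase, so only the cubic terms remain, and these are estimated by Strichartz and Sobolev at $H^3$. In the resonant region $|\xi_1|\sim|\xi_2|\gtrsim\al$, the loss of the factor $\al$ is instead compensated by high regularity, since $\al\le|\xi_1|$ there. For the Schrödinger equation we treat $(\re N)u$ directly. At regularity $H^3\times H^2$ this term causes no derivative loss because $H^2$ is an algebra in 3D.

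\textbf{Main obstacle and closing.} We expect the hardest step to be the boundary and cubic terms in the near-resonant transition zone $|\xi_1|\sim\al$, where $\Phi$ can nearly vanish. There we would use the bilinear Strichartz estimate for $e^{-it\Delta/2}\times e^{it\al|\nabla|}$ interactions in $U^2/V^2$. The transversality of a cone with speed $\al$ and a paraboloid gives a gain of a power of $\al^{-1}$ times frequency ratios, and this gain should offset the lost factor. With these bounds, Picard iteration closes in a ball of radius $2C\|\vec u_0\|_{\HH^3}$ for $T=T(\|\vec u_0\|_{\HH^3})$, using mass conservation to control the low frequencies.

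For $l>2$ we differentiate the system and obtain tame estimates that are linear in the top-order norm, with coefficients depending on lower norms. A Gronwall argument on the same interval $[0,T]$ then gives \eqref{lwpZ}, with $C$ depending only on $\|u_0\|_{H^{l-1}}$ through those lower norms (here we may need induction on $l$). Uniqueness and continuity in $\HH^{l+1}$ follow from the contraction and from the standard embedding $U^2\subset C_t$.
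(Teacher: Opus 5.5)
\textbf{There is a genuine gap.} The core step fails. No contraction scheme can close in $\HH^3$ on an $\alpha$-independent interval, whether plain Picard iteration or Picard iteration with normal forms in the nonresonant region. The reason is that the quadratic part of the data-to-solution map is not bounded uniformly in $\alpha$.

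The culprit is exactly the high--low interaction you place in the ``transition zone''. There it is genuinely resonant on a set large enough to matter. Writing $\eta=(\eta_1,\eta')\in\R\times\R^2$, take $n_0=n_1=0$ and $u_0=a+b$, where:
\begin{itemize}
\item $\hat b\ge0$ is a bump supported in $\{|\eta_1|\le\alpha^{-1},\,|\eta'|\le1\}$ with $\|b\|_{L^2}=1$;
\item $\hat a\ge0$ is a bump supported in $\{|\eta_1-2\alpha|\le\alpha^{-1},\,|\eta'|\le1\}$ with $\|a\|_{L^2}=\alpha^{-3}$.
\end{itemize}
Both are $O(1)$ in $H^3$. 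Expanding, your phase satisfies $|\Phi|\lesssim1$ (for the resonant sign) for all $\xi_1\in\mathrm{supp}\,\hat a$ and $\xi_2\in\mathrm{supp}\,\hat b$. So the wave Duhamel term $n^{(1)}$ built from the free flow is coherent on $[0,c]$. Since $a\bar b$ sits at frequency $\approx2\alpha$ with $\|a\bar b\|_{L^2}\sim\alpha^{-7/2}$,
\[
\|n^{(1)}(t)\|_{H^2}\gtrsim t\,\alpha\,\big\||\nabla|^{3}(a\bar b)\big\|_{L^2}\sim t\,\alpha^{1/2},\qquad 0<t\le c .
\]

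Any contraction scheme needs, for free $u$, a bound on this Duhamel term by $C\|u_0\|_{H^3}^2$ with $C$ independent of $\alpha$. This example violates it, and the normal form does not change that since $|\Phi|\lesssim1$ here. Quantitatively, the bilinear estimate for the pair (low-frequency Schr\"odinger wave, wave of speed $\alpha$) gains only $|\alpha-K|^{-1/2}\min\{K,M\}\sim\alpha^{-1/2}$. Against this, a full factor $\alpha$ is lost from $\alpha|\nabla|$ at frequency $2\alpha$. The example shows the net $\alpha^{1/2}$ loss is real, not an artifact of the method.

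The Schr\"odinger equation has the same problem. Your claim that $(\re N)u$ costs nothing because $H^2$ is an algebra is false. At the $H^3$ level, the piece $(\nabla^3N)\,u_{\mathrm{low}}$ loses a full derivative, since $N\in H^2$ only. This piece is also resonant at $|\xi|\approx2\alpha$. Take $u_0=b$, $n_1=0$, and $n_0$ real of the same shape at frequencies $\pm2\alpha e_1$ with $\|n_0\|_{H^2}\sim1$. Then the corresponding Schr\"odinger Duhamel term satisfies $\|u^{(1)}(t)\|_{H^3}\sim t\alpha^{1/2}$.

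The true solution stays bounded only because energy oscillates between the high Schr\"odinger mode and the acoustic mode on the time scale $\alpha^{-1/2}$. That is a non-perturbative effect. It is the Zakharov analogue of the Masmoudi--Nakanishi obstruction recalled in the introduction.

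\textbf{The missing idea} is a mechanism that captures this exchange exactly rather than perturbatively. The paper uses high-order energy identities with
\[
\mathcal E_k=\|D^{k+1}u\|_{L^2}^2+\tfrac12\|D^kn\|_{L^2}^2+\tfrac12\|\alpha^{-1}D^{k-1}\partial_tn\|_{L^2}^2+\int D^k(|u|^2)\diamond D^kn .
\]
One integrates $\int D^k(|u|^2)\diamond D^kn_t$ by parts in time. The top-order term $\re\int u\,D^k\bar u_t\diamond D^kn$ carries both the derivative loss and the resonant exchange. It then cancels exactly against the corresponding term from the Schr\"odinger energy. The remaining interaction terms contain neither $\alpha$ nor a derivative loss.

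The indefinite correction $\int D^k(|u|^2)\diamond D^kn$ is absorbed using the modified mass $\|D^ku\|_{L^2}^2$ together with Young's inequality. This gives a bootstrap bound on $[0,T(\|\vec u_0\|_{\HH^3})]$. Existence comes from the $\alpha$-dependent Ginibre--Tsutsumi--Velo theory transferred by scaling, continued to $[0,T]$ via this a priori bound. Your tame-estimate/Gronwall step for $l>2$ fits that framework, but its base must be the energy argument, not a contraction.
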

Note that a crucial point of this proposition is that the local existence time $T$ and the local estimate \eqref{lwpZ} are independent of $\al$. A similar conclusion is necessary for previous results on the subsonic problem. Compared with \cite{Weinstein1986,Added1988,Ozawa1992,Kening1995}, the novelty of Proposition 1.1 is that we prove an $\al$-uniform Sobolev local theory for large and low-regularity data without spatial weights. Compared with the result of Masmoudi and Nakanishi \cite{Masmoudi2008}, they prove the uniformly bounded Zakharov energy, but their result is qualitative and does not provide the high-order $\al$-uniform Sobolev bounds needed to extract convergence rates.

The main result on the rate of convergence is as follows:
\begin{thm}
\label{mainthm}
Suppose that $\vec u_0\in\HH^{3}$. Let $\vec u,v,w$ be the solutions to \eqref{equ-Z}, \eqref{equ-NLS}, and \eqref{equ-w}, respectively, with $v_0=u_0$. Then, there exists $ T = T\big(\|\vec u_0\|_{\HH^{3}}\big)$
which is independent of $\alpha$, and $\al_0>0$ such that for any $\al>\al_0$, the following estimates hold:
\begin{enumerate}
\item 
It holds that
\begin{align}
\label{thm:rconvergencerate1}
\quad \sup_{t \in [0, T]}  \|u(t)-v(t)\|_{L^2_x(\R^3)} \leq C(\|\vec u_0\|_{\HH^{3}}) \al^{-1}.
\end{align}
If we further assume $\vec u_0\in \HH^{4}, n_0+|u_0|^2=0$, and  $|\na|^{-1}n_1\in H^{ 1},$ then
\begin{align} 
\label{thm:rconvergencerate2}
\sup_{t \in [0, T]} \| u(t)-v(t)\|_{L^2_x(\R^3)} \leq C(\|\vec u_0\|_{\HH^{4}}, \||\na|^{-1}n_1\|_{H^{ 1 }}) \al^{-2}.
\end{align}	
\item It holds that
\begin{align}
\label{thm:qconvergencerate1}
\quad \sup_{t \in [0, T]} \|n(t) + |u(t)|^2 - w(t)\|_{L^2_x(\R^3)}\leq C(\|\vec u_0\|_{\HH^{3}}) \al^{-1}.
\end{align}
If we further assume $\vec u_0\in \HH^{4}$, then
\begin{align} 
\label{thm:qconvergencerate2}
\sup_{t \in [0, T]} \|n(t) + |u(t)|^2 - w(t)\|_{L^2_x(\R^3)} \leq C(\|\vec u_0\|_{\HH^{4}}) \al^{-2}. 
\end{align}
\end{enumerate}
\end{thm}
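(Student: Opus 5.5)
Throughout, write $N\coloneqq n+|u|^2$. With this notation the wave equation in \eqref{equ-Z} becomes
\[
\al^{-2}\pd_{tt}N-\De N=\al^{-2}\pd_{tt}|u|^2 ,
\]
and the Schr\"odinger equation becomes $2i\pd_t u-\De u=|u|^2u-Nu$. Set $z\coloneqq N-w$, where $w$ solves \eqref{equ-w}. Then $z$ has zero data and solves $\al^{-2}\pd_{tt}z-\De z=\al^{-2}\pd_{tt}|u|^2$. By Proposition~\ref{prop:lwpZ} with $l=2$ (resp. $l=3$), $\vec u$ is bounded in $\HH^3$ (resp. $\HH^4$) on a time interval $[0,T]$, uniformly in $\al$. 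Using the equation, $\pd_t u$ and $\pd_{tt}u$ then lie in $H^1$ and $H^{-1}$ uniformly in $\al$, since $\pd_{tt}u$ involves $\pd_t n=\al\,|\al\na|\cdot|\al\na|^{-1}n_t$. Note that $\pd_t n$ itself is of size $\al$, but only through the factor $|\al\na|^{-1}n_t$, which is bounded.

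\emph{Wave component.} I will write $z$ by Duhamel,
\[
z(t)=\int_0^t \frac{\sin(\al(t-s)|\na|)}{\al|\na|}\,\pd_{ss}|u|^2(s)\,\mathrm ds .
\]
Integrating by parts once in $s$ gives
\[
z=-\frac{\sin(\al t|\na|)}{\al|\na|}\pd_t|u|^2(0)+\int_0^t\cos(\al(t-s)|\na|)\,\pd_s|u|^2\,\mathrm ds .
\]
Integrating by parts again in the second term gives
\[
\int_0^t\cos(\al(t-s)|\na|)\,\pd_s|u|^2\,\mathrm ds=\frac{\sin(\al (t-s)|\na|)}{-\al|\na|}\pd_s|u|^2\Big|_0^t+\int_0^t\frac{\sin(\al(t-s)|\na|)}{\al|\na|}\pd_{ss}|u|^2\,\mathrm ds .
\]
This returns the original expression, so the plan is instead to do a normal form on the Schr\"odinger side. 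The point is that $\pd_t|u|^2=\im(\bar u\De u)-\dots$ is a divergence, namely $\pd_t|u|^2=\na\cdot \im(\bar u\na u)$. Hence $|\na|^{-1}\pd_t|u|^2\in L^2$, with norm bounded by $\|u\|_{H^1}\|u\|_{W^{1,\I}}$. Substituting this, every term in the formula for $z$ carries a factor $\al^{-1}$ and an $L^2$-bounded operator. The first term is $O(\al^{-1})$ directly. The remaining integral involves $\pd_t$ of the divergence form $\na\cdot\im(\bar u\na u)$; its time derivative involves $\pd_t u\in H^1$, hence it is bounded in $H^{-1}$ with $\HH^3$ data. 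This yields \eqref{thm:qconvergencerate1}.

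For \eqref{thm:qconvergencerate2}, I will exploit the oscillation $e^{\pm i\al s|\na|}$ once more. Integrating by parts in $s$ gains another $\al^{-1}$, at the cost of one more time derivative on $|u|^2$. The boundary terms are of size $\al^{-2}|\na|^{-2}\pd_t^2|u|^2$; these are bounded in $L^2$ only if $\pd_t^2|u|^2\in\dot H^{-2}$. This requires the $\HH^4$ bound and a double-divergence structure: $\pd_{tt}|u|^2=\na\cdot\pd_t\im(\bar u\na u)$, and $\pd_t\im(\bar u \na u)$ is itself a divergence of a stress tensor plus $n\na|u|^2$-type terms. The delicate piece is the term with $\pd_t n$, which is large (of size $\al$). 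I expect to handle it by a further integration by parts/normal form in which $\pd_t n$ is paired with the wave phase, using $\al^{-1}\pd_t n=|\na|\cdot|\al\na|^{-1}n_t$.

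\emph{Schr\"odinger component.} The difference $r=u-v$ satisfies
\[
2i\pd_t r-\De r=(|u|^2u-|v|^2v)-(w+z)u .
\]
The $z$-term is $O(\al^{-1})$ (resp. $O(\al^{-2})$) in $L^2$ by the above. The key term is $wu$, since $w$ is only $O(1)$ in the incompatible case. Its Duhamel contribution is
\[
\int_0^t e^{i(t-s)\De/2}\,(w u)(s)\,\mathrm ds ,
\]
where $w$ oscillates at frequency $\al|\xi|$. Since $\al|\xi|$ dominates the Schr\"odinger phase $|\eta|^2$ whenever $|\xi|\gtrsim\al^{-1}$ and $\al$ is large, a normal form (integration by parts in time) gains $\al^{-1}$, with the loss $|\xi|^{-1}$ absorbed by the fact that $|\al\na|^{-1}\pd_t w$ and $w$ are in $L^2$. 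This gives rate $\al^{-1}$.

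Under the compatibility condition, $w_0=0$ and $w_1=n_1+\pd_t|u|^2(0)$, with $|\na|^{-1}w_1\in H^1$. Hence $w=\frac{\sin(\al t|\na|)}{\al|\na|}w_1$ is already $O(\al^{-1})$, and the normal form gains another $\al^{-1}$. The low-frequency region $|\xi|\lesssim \al^{-1}$ contributes $O(\al^{-2})$ by volume counting in 3D (Bernstein on a set of measure $\al^{-3}$). Finally, Gronwall's inequality in $L^2$, using the uniform $L^\I$ bounds from the $\HH^3$ control, closes the estimate for $r$. The main obstacle is the second-order wave estimate: controlling the $\pd_t n$ terms there, where the bilinear/normal-form structure is needed to avoid the $\al$ loss.
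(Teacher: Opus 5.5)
\textbf{Gap.} The genuine gap is \eqref{thm:qconvergencerate2} for incompatible data. You flag it as the main obstacle but do not resolve it, and the remedy you suggest cannot work.

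After your second integration by parts, the bad contribution is, up to Riesz transforms and similar harmless factors, $\al^{-2}\int_0^t\cos(\al(t-s)|\na|)\big(|u|^2\p_s n\big)\,ds$. Writing $n=-|u|^2+w+z$, the piece $\p_s w$ has size $\al$ in $L^2$, so this term is only $O(\al^{-1})$.
\begin{itemize}
\item Integrating by parts in $s$ to take $\p_s$ off $w$ puts $\p_s\cos(\al(t-s)|\na|)=\al|\na|\sin(\al(t-s)|\na|)$ back on the propagator. This returns the $O(\al^{-1})$ expression you started from, which is the same circularity you already met.
\item The identity $\al^{-1}\p_t n=|\na|\,|\al\na|^{-1}n_t$ only restates that $\p_t n$ is of size $\al$.
\item A genuine normal form would have to divide by the full phase. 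Pair the Duhamel half-wave propagator with the half-wave component of $w$ of the same sign, and let $\xi_1$ be the frequency of $w$ and $\xi_2$ that of $|u|^2$. The phase is then $\al(|\xi_1+\xi_2|-|\xi_1|)+O(1)$, and it can vanish whenever $\xi_2$ is nearly orthogonal to $\xi_1$.
\end{itemize}
So there is no non-resonance to exploit, and Strichartz estimates alone give only $\al^{-2+\varepsilon}$.

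\textbf{Missing idea.} What is needed is the transversality between the speed-$\al$ waves and the slow Schr\"odinger waves. This is exactly the term \eqref{5-2} in the paper. After rescaling by $S_\al$ and substituting $n_\al=-\al|u_\al|^2+w_\al+\Psi_\al$, the paper does three things:
\begin{itemize}
\item it dualizes against $\ell^2_N U^2_{+|\na|}$;
\item it splits by H\"older into two $L^2_{t,x}$ products;
\item it applies the Schr\"odinger--wave bilinear estimate (Corollary \ref{lem:bi-linear-W}) to the pairs $(w_{\al,+},u_\al)$ and $(u_\al,g)$.
\end{itemize}
This gains a derivative on each low-frequency $u_\al$ factor, which pays for the missing power of $\al$ with no $\varepsilon$-loss.

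\textbf{What works.} The rest of your plan is sound, and more elementary than the paper's rescaled dyadic argument.
\begin{itemize}
\item Your proof of \eqref{thm:qconvergencerate1} works if you use the original Duhamel formula directly, since the once-integrated one has a $\cos$ term with no factor $\al^{-1}$. This gives $\|z(t)\|_{L^2}\le\al^{-1}\int_0^t\|\p_s\im(\bar u\na u)\|_{L^2}\,ds$.
\item Your claim that $\p_{tt}u$ is uniformly bounded in $H^{-1}$ is false because of the term $u\,\p_t n$, but you never use it.
\item Your normal form for $wu$ is the paper's splitting normal form, and Gronwall then closes \eqref{thm:rconvergencerate1}.
\item \eqref{thm:rconvergencerate2} does not actually depend on the gap, although you cite ``the above''. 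Under its hypotheses, $w_0=0$ and $\|w_1\|_{L^2}\les\|n_1\|_{L^2}+\|\p_t|u|^2(0)\|_{L^2}=O(1)$, so $\p_t w=\cos(\al t|\na|)w_1=O(1)$. Moreover, $\p_t z=\int_0^t\cos(\al(t-s)|\na|)\p_{ss}|u|^2\,ds=O(1)$ in $L^2$ for $\HH^4$ data. Hence $\p_t n=O(1)$ in $L^2$, and your twice-integrated formula already gives $z=O(\al^{-2})$ in that case.
\end{itemize}
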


	
\begin{remark} We  make several miscellaneous remarks regarding the main theorem.
\begin{enumerate} 
\item \textbf{Possible increase of $|\nabla|^{-1}n_1$.} In this paper, the convention $|\al\nabla|^{-1}n_1\in H^s,s>0$ means that
$\norm{|\al\nabla|^{-1}n_1}_{H^s}\le C,$
with a constant $C$ independent of $\al$. Therefore, the assumption $\vec u_0\in\HH^3$ implies that $\norm{|\nabla|^{-1}n_1}_{H^{2}}\le C\al$, allowing $|\nabla|^{-1}n_1 $ to grow as $\al\to+\I$. In this sense, our assumption is weaker than those in previous quantitative results, which require uniform-in-$\al$ control of either $n_1 \in H^s$ \cite{Kening1995} or $ |\nabla|^{-1}n_1 \in H^s$ \cite{Added1988,Ozawa1992}.

\item \textbf{Derivative loss.}  In the quantitative error estimates, the main derivative-loss terms arise from $|\al\na|^{-1}\p_{tt}|u|^2$ in the wave remainder. In the resonant region, the oscillation does not cancel the quadratic nonlinear effect. This is where the first-order $L^2$-estimate requires a three-derivative loss. The  resonant structure also shows that one more derivative gains one more power of $\al^{-1}$. In the non-resonant part, we use normal form reductions and transversal bilinear Strichartz estimates to obtain the desired convergence rate without further derivative loss. These two parts lead to the assumptions \(\vec u_0\in\HH^3\) for the first-order \(L^2\)-estimate and \(\vec u_0\in\HH^4\) for the second-order \(L^2\)-estimate.
\item \textbf{Initial data error.} We assume zero initial data error: \(v_0 = u_0\), \(w_0 = n_0 + |u_0|^2\), \(w_1 = n_1 + \partial_t |u|^2(0)\) for simplicity. The results remain valid if a suitable initial error estimate is imposed. For example, \eqref{thm:rconvergencerate1} and \eqref{thm:qconvergencerate1} hold under
$$
\|(u_0 - v_0,\, n_0 + |u_0|^2 - w_0,\, |\alpha\nabla|^{-1}(n_1 + \partial_t |u|^2(0) - w_1))\|_{H^{1/2} \times H^{1/2} \times H^{1/2}} \le C \alpha^{-1},
$$
where the initial data for \eqref{equ-Z} may implicitly depend on $\al$.	Moreover, the second-order convergence requires second-order initial errors.    

		\end{enumerate}

	\end{remark}

Now, we briefly explain the main ideas and innovations of our result. The proof has two main parts: the local theory uniformly in $\al$ and the error estimate.

\begin{enumerate}
\item \textbf{Local theory.}  
To obtain the convergence rate, we require uniform estimates for \eqref{equ-Z}, including both the existence time and spacetime norms independent of $\alpha$.

The main difficulty in establishing uniform local theory arises from the resonant frequency. In the related Klein-Gordon-Zakharov setting, Masmoudi and Nakanishi showed that it is impossible to obtain uniform bounds in any Sobolev space as $\alpha \to \infty$ via a simple fixed-point argument (Theorem 10.1 in \cite{Masmoudi2005}). In \cite{Masmoudi2008}, they derived local $H^1$ estimates using the energy structure on short time intervals. Moreover, Kenig-Ponce-Vega \cite{Kening1995} derived $\alpha$-independent local results using the standard iteration scheme, which requires high Sobolev regularity and decay assumptions on the initial data.

Inspired by the energy method in \cite{Masmoudi2008}, we use high-order energy identities to obtain $\mathit{a\ priori}$ control of the high-order Sobolev norms. However, the remainder cannot be closed using the energy structure alone, so we further invoke the mass structure with high-order derivatives to close the bootstrap argument.

\item \textbf{Error estimate: normal form.}  
In the error estimate, the first main challenge is to identify the number of derivatives required to achieve the desired first- or second-order convergence. In the wave component, the main term arises from the nonlinear term $\alpha^{-2}\partial_{tt}|u|^2$, particularly in the high-low frequency interaction:
\[
\alpha^{-1}\int_0^t e^{is \alpha |\nabla|} (|\nabla|^3 u_{high} u_{low}) \, ds.
\]
A direct application of Strichartz estimates yields a decay rate of $\alpha^{-1}$ for this term. However, the high-frequency part with the frequency support in $\fbrk{|\xi| \ll \alpha}$ poses an obstacle for higher-order decay. Fortunately, in this case, the non-resonant structure allows us to employ the normal form method to obtain additional decay.

\item \textbf{Error estimate: generating derivative terms.}  
The second main challenge is how to handle terms whose convergence order is insufficient. First, to identify the decay in $\al$ more clearly, we introduce a scaling transformation $S_\alpha f(t,x) = \alpha^{-1} f(\alpha^{-2} t, \alpha^{-1} x)$. Under this new variable, if we can generate an additional spatial derivative on the solution, then we can gain an additional $\al^{-1}$-decay. Then, we obtain additional decay on $\al$ using the following two strategies:

$\bullet$ Splitting normal form method.  
The main term of the Schr\"odinger component is
\begin{align}
\label{intro:Sterm}
\alpha^{-1} \int_0^t e^{\frac i2 s \Delta} S_\alpha w \, S_\alpha v \, ds.
\end{align}
Applying Strichartz estimates yields at most $\alpha^{-1/2+}$, which is insufficient to achieve the $\alpha^{-1}$ convergence. This necessitates us to exploit the oscillation integral property for this term.

After Fourier transformation, the term reads
\begin{align}
\label{intro:fourier}
\alpha^{-1} \int_0^t \int_{\R^3} e^{is\phi} \F\big(e^{is|\na|}S_\alpha w_{-}\big)(\xi_1)\, \F\big(e^{\frac i2s\De}S_\alpha v\big)(s, \xi-\xi_1)\dxio\ds,
\end{align} 
where $\phi=\frac{1}{2}|\xi|^2-|\xi_1|-\frac12 |\xi-\xi_1|^2$ and $S_\alpha w_-:=S_\al (w-i|\al\nabla|^{-1}w_t)$. The standard normal form transform is to write $e^{is\phi}=\frac{\partial_se^{is\phi}}{i\phi}$ and integrate by parts in $s$ when $\phi$ is non-degenerate. 

In this paper, we use a modification. We split the phase as $\phi = \phi_1 + \phi_2 + \phi_3$ with $\phi_1 = \frac12|\xi|^2$, $\phi_2 = -|\xi_1|$, $\phi_3 = -\frac12|\xi-\xi_1|^2$, and write $$e^{is\phi} = (i\phi_2)^{-1} \partial_s (e^{is\phi_2}) e^{is\phi_1} e^{is\phi_3}.$$
This splitting effectively gains an additional spatial derivative: In fact, from the viewpoint of Coifman-Meyer multilinear multiplier theorem, we have the heuristic observation
$$\phi_{2}^{-1}\p_s\big[e^{is\phi_1} \widehat{S_\al v}(s,\xi-\xi_1)\big]\sim |\xi_1|^{-1}|\xi|^{2}\widehat{S_\al v}(s,\xi-\xi_1)+|\xi_1|^{-1}\partial_s\widehat{S_\al v}(s,\xi-\xi_1),$$
then this gives $|\nabla|^{-1}$ for $S_\al w_{-}$ and $|\nabla|^2$ for $S_\al v$. Therefore, through the integration by parts in $s$, we can totally gain first-order derivative, thereby obtaining an extra factor of $\al^{-1}$. This idea is crucial for achieving the desired first-order convergence in the incompatible case and second-order convergence in the compatible case. We also note that the splitting normal form method is used in \cite{Bai2025rough} to control the derivative loss.

$\bullet$ Bilinear Strichartz estimates.  To obtain the optimal $\al^{-2}$-rate for wave component, the main term is
\[
\int_0^t e^{is|\nabla|} |\nabla| \big[(P_{\ll1} S_\alpha w)_{high} \, (P_{\ll1} S_\alpha u)_{low}(P_{\ll1} S_\alpha u)_{low}\big]   \ds.
\]
There are two difficulties: 
\begin{enumerate}
	\item Simply applying the Strichartz estimates for this integral provides at most $\alpha^{-2+}$ decay, with an arbitrarily small $\epsilon$-loss. Moreover, this estimate requires higher regularity $\vec u_0\in \HH^4$.
	\item 
	For this particular interaction frequency, the resonant structure prevents the use of the standard normal form, and the splitting normal form also fails.
\end{enumerate}
To overcome these difficulties, we exploit the transversality of the Schr\"odinger and wave components, namely the bilinear Strichartz estimate: for $N\ll1$,
\begin{align}
\label{intr:bil}
\| [ e^{\pm i t |\nabla|} f_M ] [ e^{\frac12 i t \Delta} g_N ] \|_{L^2_{t,x}}\les M \| f_M \|_{L^2}\| g_N \|_{L^2}.
\end{align}
Under the $U^p$-$V^p$ framework, it suffices to control it using duality
$$\int_{0}^{\al^2 T}\int_{\R^3} (P_{\ll1}|\nabla|S_\alpha w)_{high} \, |(P_{\ll1} S_\alpha u)_{low}|^2 \, \phi_{high}\dx\dt,$$
where $e^{it|\na|}\phi\in U^{2}(\R;L^2_x)$. This integral is bounded by a product of two $L^2_{t,x}$ integral: $$\| (P_{\ll1}|\nabla|S_\alpha w)_{high} \,(P_{\ll1} S_\alpha u)_{low}\|_{L^2_{t,x}} \|  (P_{\ll1} S_\alpha u)_{low}\, \phi_{high}\|_{L^2_{t,x}}.$$ 
Applying the transferred version of \eqref{intr:bil} twice to each of two $L^2_{t,x}$ integrals, we can effectively generate each $S_\al u$ an additional spatial derivative. This allows us to cover the loss of $\al^{\varepsilon}$.

\end{enumerate}

	\subsection{Organization of the paper}
	In Section \ref{sec:pre}, we give some notations, definitions, and useful results. In Section \ref{sec:local}, we prove the uniform $\mathit{a\ priori}$ estimate and local well-posedness for the solution of the system \eqref{equ-Z}. In Section \ref{sec:proofofmain}, we give the error estimate and finish the proof of Theorem \ref{mainthm}. 
	
		\vspace{0.5cm} 
	
	\section{Preliminaries}
	\label{sec:pre}
	
	\vspace{0.5cm}
	
\subsection{Notation}
	\label{sec:notations}
	For any $z\in\C$, we define $\re z$ and $\im z$ as the real and imaginary parts of $z$, respectively. Let $\varepsilon$ denote a constant satisfying $0<\varepsilon\ll 1$.
	Denote $\langle x\rangle=\sqrt{1+|x|^2}$.
	Let $C,C_i>0,\ i\in\N$ denote constants which may vary from line to line. If a constant $C$ depends on $a$, then we may write $C(a)$. If $f\leqslant C g$, we write $f\lesssim g$ or $f=O(g)$.
	Let $\chi\in C_0^\I(\R^d)$ be a radial, real-valued, and smooth cut-off function such that $\chi \geqslant  0$ satisfies
	\EQ{
		\chi(\xi) \coloneqq \left\{ \aligned
		&1\text{, when $|\xi|\le 1$,}\\
		&0\text{, when $|\xi|\ge 2$.}
		\endaligned
		\right.
	}
	For dyadic $N\in 2^{\Z}, $ let $\chi_N(\xi)\coloneqq \chi(N^{-1}\xi). $ Then we define
	$$ \varphi_N(\xi)\coloneqq \chi_N(\xi)-\chi_{\frac{N}{2}}(\xi). $$
	We use $\hat{f} $ or $\F f$ to denote the Fourier transform of $f$:
	\EQ{
		\hat{f} (\xi)=\F f(\xi)\coloneqq  \int_{\R^d} e^{-ix\cdot\xi}f(x)\rm dx,
	}
	we also define 
	\begin{align*}
		\F^{-1} g(x)\coloneqq \frac{1}{(2\pi)^d} \int_{\R^d} e^{ix\cdot\xi}g(\xi)\dxi.
	\end{align*}
	Using the Fourier transform, we can define the fractional derivative $|\na|\coloneqq\F^{-1}|\xi|\F $ and $|\na|^s\coloneqq\F^{-1}|\xi|^s\F. $
	Moreover, we also need the following homogeneous Littlewood-Paley dyadic operators: for any $N\in2^\Z$,
	\begin{align*}
		f_N=P_N f\coloneqq &\F^{-1}\big(\varphi_N(\xi) \hat{f}(\xi)\big).
	\end{align*}
	Then by definition, we have $f=\sum_{N\in 2^{\Z}}f_N.$ Moreover, we also need the following: for any  $N\in2^\Z$,
	\begin{align*}
		f_{\leq N}=P_{\leqslant N} f\coloneqq&\F^{-1}\big(\chi_N(\xi) \hat{f}(\xi)\big), \\
		f_{\ll N}=P_{\ll N} f \coloneqq&\F^{-1}\big(\chi_N(2^{5}\xi) \hat{f}(\xi)\big), \\
		f_{\les N}=P_{\les N} f \coloneqq &\F^{-1}\big(\chi_N(2^{-5}\xi) \hat{f}(\xi)\big), 
	\end{align*}
	and 
	\begin{align*}
		f_{\sim N}= P_{\sim N} f\coloneqq f_{\les N}-f_{\ll N}.
	\end{align*}
	We also denote that 
	$f_{\geq N}=P_{\geq N} f\coloneqq f-f_{\leq N}, f_{\gg N}=P_{\gg N} f\coloneqq f-f_{\les N}, \mbox{ and } f_{\ges N}=P_{\ges N} f \coloneqq f-P_{\ll N} f.$

	Let $\mathscr{S}(\R^d)$ be the Schwartz space, $\mathscr{S}'(\R^d)$ be the tempered distribution space, and $C^\infty_0(\R^d)$ be the space of all the smooth compact-supported functions. Given $1\leq p\leq \infty$, $L^p(\R^d)$ denotes the usual Lebesgue space. For any  $s\in\R $ and $1< p<\infty,$ we define the homogeneous Sobolev space
		\begin{align*}
			\dot{W}^{s,p}(\R^d)\coloneqq \bigl\{f\in \mathscr{S}'(\R^d)/\mathscr{P}(\R^d):\|f\|_{\dot{W}^{s,p}(\R^d)}\coloneqq \||\na|^sf\|_{L^p(\R^d)}<+\infty\bigr\},
		\end{align*} 
		where $\mathscr{S}'(\R^d)/\mathscr{P}(\R^d)$ denotes a quotient space of  tempered distributions modulo polynomials.
		We denote that $\dot{H}^s(\R^d)\coloneqq \dot{W}^{s,2}(\R^d).$ For $s\geq 0$, the inhomogeneous spaces are defined by 
		\begin{align*}
			W^{s,p}(\R^d)=	\dot{W}^{s,p}(\R^d)\cap L^{p}(\R^d);\quad H^s(\R^d)=\dot{H}^s(\R^d)\cap L^2(\R^d).
		\end{align*} 
	Let $I\subset \R$ be a time interval. We often use the abbreviations $L^p=L^p(\R^3)$, $ H^s=H^s(\R^3)$, $ L^q_tL^r_x(I)=L^q_tL^r_x(I\times\R^3)$, and  $ L^q_tH^s_x(I)=L^q_tH^s_x(I\times\R^3)$. Moreover, we denote that
	$
	\|f\|_{X\cap Y}\coloneqq \|f\|_X+\|f\|_Y$ and $ \|(f,g)\|_{X\times Y}\coloneqq \|f\|_X+\|f\|_Y$.
	Next, we show the Triebel-Lizorkin Spaces $\dot{F}_{p}^{a, q}$ with the corresponding norm as follows,
	\EQ{
		\|u\|_{\dot{F}_{p}^{a, q}}=\|N^{a}P_Nu\|_{ L_x^p \ell_{N\in 2^{\mathbb{Z}}}^q}.
	}
	For any $1\leq p<\infty,$ define $\ell_N^p=\ell^p_{N\in2^\Z}$ by its norm
	\begin{align*}
		\|c_N\|_{\ell_N^p}^p\coloneqq \sum_{N\in2^\Z} |c_N|^p.
	\end{align*}
	In this paper, we use the following abbreviations:
	\begin{align*}
		\sum_{N:N\leq N_1}\coloneqq\sum_{N\in2^\Z:N\leq N_1} \mbox{ and }
		\sum_{N\leq N_1}\coloneqq\sum_{N,N_1\in2^\Z:N\leq N_1}.
	\end{align*}
We also use $\lceil x\rceil$ to represent the smallest integer greater than or equal to $x$.
	
	The following definition of ``admissible exponent pair'' for the Strichartz estimates will be used frequently.
	\begin{definition}[Admissible pair]
		(1) We say that the exponent pair $(q,r)\in\R^2$ is sharp $\sigma$-admissible if $q,r\geq2,(q,r,\sigma)\ne(2,\I,1)$ and 
		\begin{align*}
			\frac{1}{q}+\frac{\sigma}{r}=\frac{\sigma}{2}.
		\end{align*}
		For a given dimension $d,$ we say that a pair $(q,r)$ of exponents is wave-admissible if $d\geq2$ and $(q,r)$ is sharp $\frac{d-1}{2}$-admissible, and Schr\"odinger-admissible if $d\geq1$ and $(q,r)$ is sharp $\frac{d}{2}$-admissible.
		
		(2) For any $0\leq \gamma\leq1$, we say that the exponent pair $(q,r)\in\R^2$ is $\dot H^\gamma$-admissible, if $\frac{2}{q}+\frac{d}{r}=\frac{d}{2}-\ga$, $2\leq q\leq\I$, $2\leq r\leq\I$, and $(q,r,d)\ne(2,\I,2)$. If $\ga=0$, we say that $(q,r)$ is $L^2$-$admissible$.
	\end{definition}
	We define homogeneous spaces 
	$\mathcal{S}^{\gamma}(I), 
	\mathcal{W}^{\gamma}(I)$
	with norm:
	\begin{align*}
		\|f\|_{\mathcal{S}^{\gamma}(I)}&\coloneqq \||\na|^{\gamma}f\|_{L^{\infty}_tL^2_{x}(I)}+\||\na|^{\gamma}f\|_{L^{2}_tL^6_{x}(I)},\\
		\|g\|_{\mathcal{W}^{\gamma}(I)}&\coloneqq \||\na|^{\gamma}g\|_{L^{\infty}_tL^2_{x}(I)}.
	\end{align*} 
	We also define inhomogeneous spaces $X^{k}, Y^{l}$ as follows:
	\begin{align*}
		\|f\|_{X^k(I)}&\coloneqq \|\jb{\nabla}^kf\|_{L^{\infty}_tL^2_{x}(I)}+\|\jb{\nabla}^kf\|_{L^{2}_tL^6_{x}(I)},
		\notag\\
		\|g\|_{Y^l(I)}&\coloneqq \|\jb{\nabla}^{l}g\|_{L^{\infty}_tL^2_{x}(I)}.
	\end{align*}

\subsection{Atom space and bounded variation space} In this section, we present the definitions of $U^p$ and $V^p,$ and summarize their relevant properties for our purposes. The $U^p$-$V^p$ methodology was pioneered by Koch-Tataru \cite{Koch-Tataru2005-CPAM}, and we direct readers to the comprehensive treatments in \cite{Candy2018Ann.PDE,Hadac-Herr-Koch-Poincare,Koch-Tataru2005-Duke,Koch-Tataru-Visan:Book} for complete theoretical details and further applications of these spaces.
\begin{definition}
	Let $1<p<\infty$ and $ \mathcal{Z}$ be the set of finite partitions $-\infty<t_0<t_1<...<t_K\leq\infty.$ 

(1) We call the function $a$ is a $U^p$-atom, if there exists a finite partition $\{t_k\}_{k=0}^{K} \in \mathcal{Z}$ and a collection $\{\phi_k\}_{k=0}^{K-1} \subset L_x^2$ with $\sum_{k=0}^{K-1} \|\phi_k\|_{L^2_x}^p=1,$  such that  $a:\R\rightarrow L^2_x$ given by $a=\sum_{k=1}^K\mathbbm{1}_{[t_{k-1},t_k)}\phi_{k-1}.$ Furthermore, the atomic space $U^p$ is then defined as 
		\begin{align}
			\label{def:Up}
			U^p(\R;L^2)\coloneqq \bigl\{u=\sum_{j=1}^{\infty}\lambda_ja_j:a_j \mbox{ are } U^p\mbox{-atoms},\lambda_j \in \C \mbox{ with } \sum_{j=1}^{\infty}|\lambda_j|<\infty\bigr\},
		\end{align}
		with the induced norm
		\begin{align}
			\label{norm:up}
			\|u\|_{U^p(\R;L^2)}\coloneqq \inf\bigl\{\sum_{j=1}^{\infty}|\lambda_j|:u=\sum_{j=1}^{\infty}\lambda_ja_j:a_j \mbox{ are } U^p\mbox{-atoms},\lambda_j \in \C\bigr\}.
		\end{align}
		
		\
		
(2) We define the space $	V^p(\R;L^2)$ as the normed space of all functions $v:\R\rightarrow L^2_x$ such that
			\begin{align}
				\label{norm:Vp}
				\|v\|_{V^p(\R;L^2)}\coloneqq\sup_{\{t_k\}_{k=0}^{K} \in \mathcal{Z}}\big(\sum_{k=1}^{K}  \|v(t_k)-v(t_{k-1})\|_{L^2_x}^p\big)^{\frac{1}{p}}.
			\end{align}
			is finite, where we use the convention $v(t_K)=v(\infty)=0.$ $V_{rc}^p$ denotes the closed subspace of all right-continuous $V^p$ functions with $\lim_{t\rightarrow-\infty}v(t)=0.$
			
			\
			
(3)  We define $U_{\De}^2(\R;L^2_x)$ as the adapted normed space
			\begin{align}
			U_{\De}^2(\R;L^2_x)\coloneqq\bigl\{u:\|u\|_{U_{\De}^2(\R;L^2_x)}=\|e^{\frac{1}{2}it\De}u\|_{U^2(\R;L^2_x)}<\infty\bigr\}.
			\end{align}
			Similarly,  $V_{\De}^2(\R;L^2_x)$ denotes the adapted normed space
				\begin{align}
				V_{\De}^2(\R;L^2_x)\coloneqq\bigl\{u:\|u\|_{	V_{\De}^2(\R;L^2_x)}=\|e^{\frac{1}{2}it\De}u\|_{V^2(\R;L^2_x)}<\infty, e^{\frac{1}{2}it\De}u\in V_{rc}^2\bigr\}.
			\end{align}
			
			\
			
(4)  We define $U_{\pm|\na|}^2(\R;L^2_x)$ as the adapted normed space:
			\begin{align}
				U_{\pm|\na|}^2(\R;L^2_x)\coloneqq\bigl\{u:\|u\|_{	U_{\pm |\na|}^2(\R;L^2_x)}=\|e^{\pm it|\na|}u\|_{U^2(\R;L^2_x)}<\infty\bigr\}.
			\end{align}
Elements of $U_{\pm|\na|}^2(\R;L^2_x)$ can be regarded as  being close to solutions to the linear half-wave equation. Indeed, the atoms in $U_{\pm|\na|}^2(\R;L^2_x)$ are piecewise solutions to  $(i\partial_t \mp |\na|)w_{\pm}=0.$
		Similarly,  $V_{\pm|\na|}^2(\R;L^2_x)$ denotes the adapted normed space
			\begin{align}
				V_{\pm|\na|}^2(\R;L^2_x)\coloneqq\bigl\{u:\|u\|_{	V_{\pm|\na|}^2(\R;L^2_x)}=\|e^{\pm it|\na|}u\|_{V^2(\R;L^2_x)}<\infty, e^{\pm it|\na|}u\in V_{rc}^2\bigr\}.
			\end{align}
\end{definition}

In this paper, we will use restriction spaces on an interval $$I\subset \R:U^p(I;L^2),V^p(I;L^2),	U_{\De}^2(I;L^2_x),	V_{\De}^2(I;L^2_x),U_{\pm|\na|}^2(I;L^2_x), \mbox{ and } V_{\pm|\na|}^2(I;L^2_x).$$ See Remark 2.23 in \cite{Hadac-Herr-Koch-Poincare} for more details.

Note that for $1\leq p<q<\infty,$ the embeddings 
\begin{align*}
	U^p(\R;L^2_x)\hookrightarrow L^\infty_t(\R;L^2_x), 	V^p(\R;L^2_x)\hookrightarrow L^\infty_t(\R;L^2_x), 
\end{align*}
and $U^p\hookrightarrow V_{rc}^p\hookrightarrow U^q$ are continuous.

 We need the following classical linear estimate and duality formula:
\begin{lem}(see \cite{Hadac-Herr-Koch-Poincare})
	\label{Lem:linearforup}
	Let $I$ be an interval such that $0=\inf I.$ Then for any $f\in L^2_x$,
		\begin{align*}
		\|e^{-\frac{1}{2}it\De }f\|_{U^2_{\De}(I;L^2_x)}\les \|f\|_{L^2_x},\\
		\|e^{\mp it|\na| }f\|_{U^2_{\pm |\na|}(I;L^2_x)}\les \|f\|_{L^2_x},
	\end{align*}
and for $F(t,x)\in L^1_tL^2_x(I),$
\begin{align*}
\big\|\int_{0}^t e^{-\frac{1}{2}i(t-s)\De }F(s)\ds\big\|_{U^2_{\De}(I;L^2_x)}&=\sup_{\|g\|_{V^2_{\De}(I;L^2_x)}=1}\left|\int_{I}\int_{\R^d} F(t)\bar{g}(t)\dx\dt\right|,
	\\
\big\|\int_{0}^t e^{\mp i(t-s)|\na| }F(s)\ds\big\|_{U^2_{\pm|\na|}(I;L^2_x)}&=\sup_{\|g\|_{V^2_{\pm|\na|}(I;L^2_x)}=1}\left|\int_{I}\int_{\R^d} F(t)\bar{g}(t)\dx\dt\right|.
\end{align*}
\end{lem}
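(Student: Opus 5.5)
The two linear bounds and the two duality identities rest on the fact that the propagators are unitary groups, so I will reduce everything to statements about the unadapted spaces $U^2(I;L^2_x)$ and $V^2(I;L^2_x)$. I treat the Schr\"odinger case; the half-wave case is identical with $e^{\frac12 it\De}$ replaced by $e^{\pm it|\na|}$.

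\textbf{Linear bounds.} Let $u(t)=e^{-\frac12 it\De}f$. Then $e^{\frac12 it\De}u(t)=f$ for all $t\in I$. Extending by zero before $\inf I=0$, as in the restriction convention, this is the function $\mathbbm{1}_{[0,\infty)}f=\|f\|_{L^2}\cdot a$, where
\[
a=\mathbbm{1}_{[0,\infty)}\,f/\|f\|_{L^2}
\]
is a single $U^2$-atom with partition $t_0=0<t_1=\infty$. Hence $\|u\|_{U^2_{\De}(I;L^2_x)}\le\|f\|_{L^2_x}$, and the wave estimate follows the same way.

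\textbf{Duality identity.} Write
\[
u(t)=\int_0^t e^{-\frac12 i(t-s)\De}F(s)\ds, \qquad U(t)=e^{\frac12 it\De}u(t)=\int_0^t G(s)\ds, \quad G(s)=e^{\frac12 is\De}F(s)\in L^1_tL^2_x .
\]
I then invoke the $U^2$--$V^2$ duality theorem of Hadac--Herr--Koch (Theorem 2.8 and Proposition 2.10 in \cite{Hadac-Herr-Koch-Poincare}). For $U$ absolutely continuous with $U'=G\in L^1_tL^2_x$ it gives
\[
\|U\|_{U^2}=\sup_{\|h\|_{V^2}=1}\Big|\int \langle G(t),h(t)\rangle_{L^2_x}\dt\Big|,
\]
the supremum being over $h\in V^2_{rc}$, or equivalently over $V^2$ by density and right-continuity. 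Set $h=e^{\frac12 it\De}g$. Unitarity gives
\[
\langle e^{\frac12 it\De}F,\,e^{\frac12 it\De}g\rangle=\int_{\R^d} F\bar g\dx ,
\]
and the map $g\mapsto h$ is an isometry of $V^2_{\De}$ onto $V^2$. This yields the claimed formula.

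\textbf{Main obstacle.} The only nontrivial step is the abstract duality: the bilinear form $B(U,h)=\sum_k\langle U(t_k)-U(t_{k-1}),h(t_k)\rangle$ realizes $(U^2)^*=V^2$, and for $U'\in L^1$ it reduces to $\int\langle U',h\rangle\dt$. I will cite this rather than reprove it. Two small checks remain:
\begin{itemize}
\item the restriction to $I$ with $\inf I=0$ is compatible with this duality, which is Remark 2.23 of \cite{Hadac-Herr-Koch-Poincare};
\item the boundary convention $v(\infty)=0$ does not create an extra term, which holds because $U(0)=0$.
\end{itemize}
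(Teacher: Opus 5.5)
Your route is the one the paper takes. The paper gives no argument beyond citing Hadac--Herr--Koch, and your two steps are exactly how that citation is meant to be applied: the single-atom bound for the free flow, and conjugation by the unitary group to reduce the Duhamel term to their $U^2$--$V^2$ duality for absolutely continuous functions.

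One step needs correcting: the claim that $g\mapsto h=e^{\frac12 it\De}g$ maps $V^2_\De$ isometrically onto all of $V^2$.

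\begin{itemize}
\item \emph{The image is a proper subspace.} In this paper $V^2_\De$ is built on $V^2_{rc}$, which requires $\lim_{t\to-\infty}v(t)=0$ on top of the convention $v(\infty)=0$. Your remark about ``density and right-continuity'' handles only the right-continuous normalization, not the decay at $-\infty$.
\item \emph{This costs a genuine constant.} Take $U(t)=\min\{t_+/\epsilon,1\}\,\phi$ with $\|\phi\|_{L^2}=1$. Then $\|U\|_{U^2}=1$: it is at least $\|U\|_{L^\infty L^2}$, and at most $1$ by duality, since $\|h\|_{L^\infty}\le\|h\|_{V^2}$. On the other hand, any $h$ with $h(t)\to0$ as $t\to-\infty$ satisfies $\|h\|_{V^2}\ge\sqrt2\,\sup_t\|h(t)\|_{L^2}$; use the partition $t_0<t<\infty$ and let $t_0\to-\infty$. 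So the supremum of $|\int\langle U',h\rangle\,dt|$ over normalized $h$ in that subspace is at most $2^{-1/2}$.
\end{itemize}

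Consequently, your argument gives the stated equality only with Hadac--Herr--Koch's $V^2$, which imposes no condition at $-\infty$. With the paper's literal definitions it gives only
\[
\sup\le\|\cdot\|_{U^2}\le\sqrt2\,\sup .
\]
For the second inequality, replace $h$ by its right-continuous version cut off to $[-1,\infty)$; this agrees with $h$ a.e.\ on the support of $U'$, because $\inf I=0$. The same caveat applies to the lemma as literally stated. Since the paper only ever uses the upper bound, nothing downstream is affected.
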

		\subsection{Useful lemmas}
		Firstly, we introduce the following Bernstein estimates that will be used frequently.
		\begin{lem}[Bernstein estimates\label{lem:Bernstein}]
			For any $1\leq p \leq q \leq \infty$, $s\geq 0$, and $f\in L_x^p(\R^d)$,
			\begin{align*}
				\|P_{\geq N} f\|_{L_x^p(\R^d)}&\lesssim  N^{-s}\||\nabla|^{ s}P_{\geq N} f\|_{L_x^p(\R^d)},\\
				\||\nabla|^{ s}P_{\leq N} f\|_{L_x^p(\R^d)}&\lesssim N^s\|P_{\leq N} f\|_{L_x^p(\R^d)},\\
				\||\nabla|^{\pm s}P_N f\|_{L_x^p(\R^d)}&\sim N^{\pm s}\|P_N f\|_{L_x^p(\R^d)},\\
				\|P_{\leq N} f\|_{L_x^q(\R^d)}&\lesssim  N^{\frac dp-\frac dq}\|P_{\leq N} f\|_{L_x^p(\R^d)},\\
				\|P_N f\|_{L_x^q(\R^d)}&\lesssim  N^{\frac dp-\frac dq}\|P_N f\|_{L_x^p(\R^d)}.
			\end{align*}
		\end{lem}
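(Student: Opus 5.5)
The last statement is the Bernstein lemma (Lemma~\ref{lem:Bernstein}), so the proof is classical: write each Littlewood--Paley projection as a convolution with an $L^1$-bounded kernel, rescale, and apply Young's inequality.

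\textbf{Kernels.} Since $\chi$ and $\varphi_N$ are smooth, $P_{\le N}f = K_N * f$ with $K_N(x)=N^d \check\chi(Nx)$, where $\check\chi$ is Schwartz. Likewise $P_N f = \check\varphi_N * f$, and $\check\varphi_N(x) = N^d\check\varphi_1(Nx)$, so $\|\check\varphi_N\|_{L^1}=\|\check\varphi_1\|_{L^1}$ uniformly in $N$.

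\textbf{Young's inequality and $L^p$ boundedness.} For the last two estimates, choose a fattened cutoff $\tilde\chi$ equal to $1$ on the support of $\chi$. Then
\[
P_{\le N}f = \tilde K_N * P_{\le N} f, \qquad \tilde K_N(x)=N^d\check{\tilde\chi}(Nx).
\]
By Young's inequality with $1+\frac1q=\frac1r+\frac1p$,
\[
\|\tilde K_N\|_{L^r}=N^{d(1-\frac1r)}\|\check{\tilde\chi}\|_{L^r}=N^{\frac dp-\frac dq}\|\check{\tilde\chi}\|_{L^r}.
\]
This gives the fourth estimate, and the same argument with an annular fattening $\tilde\varphi_N$ gives the fifth. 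For the second estimate, write
\[
|\nabla|^s P_{\le N} f = \big(N^{d+s}\psi_s(N\cdot)\big) * P_{\le N} f, \qquad \psi_s=\mathcal F^{-1}\big(|\xi|^s\tilde\chi(\xi)\big).
\]
One has $\psi_s\in L^1$: the symbol is smooth away from the origin, and the $|\xi|^s$ singularity at $0$ only produces decay $|x|^{-d-s}$, which is integrable at infinity for $s>0$. Hence the operator norm is $\lesssim N^s$.

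\textbf{Comparability and high-frequency estimate.} For the third estimate, $|\xi|^{\pm s}\tilde\varphi(\xi)$ with $\tilde\varphi$ an annular fattening is smooth and compactly supported away from $0$. So $|\nabla|^{\pm s}P_N$ is convolution with $N^{\pm s}$ times a rescaled Schwartz kernel, which gives $\lesssim$ in both directions; composing the two directions gives $\sim$. For the first estimate, decompose
\[
P_{\ge N} f=\sum_{M\gtrsim N} P_M P_{\ge N} f
\]
and apply the third estimate to each piece: $\|P_M g\|_{L^p}\lesssim M^{-s}\|P_M|\nabla|^s g\|_{L^p}$. Then $\|P_M|\nabla|^s g\|_{L^p}\lesssim\||\nabla|^s g\|_{L^p}$ by the uniform $L^1$ bound on the Littlewood--Paley kernels, and the geometric sum $\sum_{M\gtrsim N}M^{-s}\lesssim N^{-s}$ converges for $s>0$. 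The case $s=0$ is just uniform $L^p$-boundedness of $P_{\ge N}=I-P_{< N}$.

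\textbf{Main obstacle.} The only delicate point is the $L^1$ bound for the kernel of $|\nabla|^s P_{\le N}$ caused by the non-smoothness of $|\xi|^s$ at the origin, needed for the second estimate. To handle it, split $P_{\le N}=\sum_{M\le N}P_M$ and apply the third estimate to each piece, giving
\[
\||\nabla|^sP_{\le N}f\|_{L^p}\lesssim \sum_{M\le N}M^s\|P_{\le N}f\|_{L^p}\lesssim N^s\|P_{\le N}f\|_{L^p},
\]
where the geometric sum converges for $s>0$; for $s=0$ there is nothing to prove.
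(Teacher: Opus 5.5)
The paper gives no proof of this lemma; it quotes it as a classical fact. Your argument is the standard proof and it is correct: rescaled Littlewood--Paley kernels, Young's inequality with fattened cutoffs, and dyadic summation to absorb the factors $|\xi|^{\pm s}$, with the geometric series converging for $s>0$ and the case $s=0$ trivial.

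The one point you leave implicit is why the dyadic decompositions converge at the endpoints $p\in\{1,\infty\}$. The cleanest fix is to work at the level of kernels. Write $\psi_s=\sum_{M\lesssim 1}\mathcal{F}^{-1}(|\xi|^s\varphi_M\tilde\chi)$, where each piece has $L^1$-norm $\lesssim M^s$. This gives $\psi_s\in L^1$ rigorously and justifies your decay heuristic at the same time.

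A cosmetic fix: the sum in your final display should run over $M\lesssim N$ (up to $M\le 4N$), not $M\le N$.
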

		\begin{lem}(Strichartz estimates, see \cite{Tao-stri})
			\label{lem:stri}
			For any functions $\phi(x)$ and $f(t,x), $\\
			(1) If $(q,r)$ and $(\tilde{q},\tilde{r})$ are Schr\"{o}dinger-admissible, 
			then
			\begin{align}
				\label{hom-Nls-str}
				\big\|e^{-\frac{1}{2}it\Delta}\phi(x)\big\|_{L_{t}^qL^{r}_{x}(I\times \R^d)}&\lesssim\|\phi(x)\|_{L^2_x(\R^d)},\\
				\label{inhom-Nls-str}
				\Big\|\int^{t}_{t_0}e^{-\frac{1}{2}i(t-s)\Delta}f(s,x)\ds\Big\|_{L_{t}^qL^{r}_{x}(I\times \R^d)}&\lesssim\|f(t,x)\|_{L_t^{\wt{q}'}L_x^{\wt{r}'}(I\times \R^d)}.
			\end{align}
			(2) If both $(q,r)$ and $(\tilde{q},\tilde{r})$ are wave-admissible,
			then
			\begin{align}
				\label{hom-wave-str}
				\big\|e^{\pm it|\na|}\phi(x)\big\|_{L_{t}^q\dot{W}^{\frac{1}{q}+\frac{d}{r}-\frac{d}{2}, r}_{x}(I\times \R^d)}&\lesssim\|\phi(x)\|_{L^2_x(\R^d)},\\
				\label{inhom-wave-str}
				\Big\|\int^{t}_{t_0}e^{\pm i(t-s)|\na|}f(s,x)\ds\Big\|_{L_{t}^q\dot{W}_{x}^{\frac{1}{q}+\frac{d}{r}-\frac{d}{2}, r}(I\times \R^d)}
				&\lesssim\|f(t,x)\|_{L_t^{\wt{q}'}\dot{W}_{x}^{\frac{d}{2}-\frac{1}{\tilde{q}}-\frac{d}{\tilde{r}}, \wt{r}'}(I\times \R^d)}.
			\end{align}
		\end{lem}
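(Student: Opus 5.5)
The plan is the standard $TT^*$ scheme driven by a dispersive estimate, treating the Schr\"odinger and the half-wave propagators in parallel. For \eqref{hom-Nls-str} I start from the explicit kernel of $e^{-\frac12 it\Delta}$, which is a complex Gaussian of modulus $\sim|t|^{-d/2}$. This gives the dispersive bound
$$\|e^{-\frac12 it\Delta}\phi\|_{L^\infty_x}\lesssim |t|^{-d/2}\|\phi\|_{L^1_x}.$$
Interpolating with the unitarity of $e^{-\frac12 it\Delta}$ on $L^2_x$ yields
$$\|e^{-\frac12 i(t-s)\Delta}\phi\|_{L^r_x}\lesssim |t-s|^{-d(\frac12-\frac1r)}\|\phi\|_{L^{r'}_x}.$$

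Writing $T\phi=e^{-\frac12 it\Delta}\phi$, boundedness $L^2_x\to L^q_tL^r_x$ is equivalent to boundedness of
$$TT^*F=\int e^{-\frac12 i(t-s)\Delta}F(s)\ds:\ L^{q'}_tL^{r'}_x\to L^q_tL^r_x.$$
For $q>2$ the exponent $d(\frac12-\frac1r)=\frac2q$ lies in $(0,1)$, so the one-dimensional Hardy--Littlewood--Sobolev inequality in $t$ closes this bound. A bilinear version of the same computation gives the retarded estimate \eqref{inhom-Nls-str}. One first treats the untruncated operator with two different admissible pairs $(q,r)$ and $(\tilde q,\tilde r)$, using the factorization $TT^*=T\circ T^*$. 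One then passes to the truncated integral $\int_{t_0}^t$ by the Christ--Kiselev lemma, which applies whenever $\tilde q'<q$.

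The endpoint $q=2$ (with $d\ge3$, so $r=\frac{2d}{d-2}$) is the step I expect to be the main obstacle. There HLS fails, and the Christ--Kiselev lemma does not cover the case $q=\tilde q'=2$. For it I would follow Keel--Tao. Decompose the bilinear form $\iint\langle T^*F(s),T^*G(t)\rangle\ds\dt$ dyadically according to $|t-s|\sim2^j$. On each piece, prove a family of off-diagonal estimates by interpolating the dispersive and energy bounds, with exponents perturbed around $(2,r)$. Then sum in $j$ using real (bilinear) interpolation, which converts the geometric gain and loss into a convergent sum. The same argument handles the truncated inhomogeneous endpoint directly.

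For the wave estimates \eqref{hom-wave-str} and \eqref{inhom-wave-str} I first localize in frequency with $P_N$. By scaling it suffices to take $N=1$. Stationary phase on the kernel of $e^{\pm it|\nabla|}P_1$, using the $d-1$ nonvanishing principal curvatures of the cone, gives
$$\|e^{\pm it|\nabla|}P_1\phi\|_{L^\infty_x}\lesssim \langle t\rangle^{-\frac{d-1}{2}}\|\phi\|_{L^1_x}.$$
The abstract Keel--Tao theorem with $\sigma=\frac{d-1}{2}$ then yields the frequency-localized estimates for sharp $\frac{d-1}{2}$-admissible pairs, including the endpoint when $d\ge4$; the case $(q,r,\sigma)=(2,\infty,1)$ is excluded by definition. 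Undoing the scaling produces the factor $N^{\frac d2-\frac1q-\frac dr}$, which is exactly the derivative weight $\dot W^{\frac1q+\frac dr-\frac d2,r}_x$. Finally I reassemble the dyadic pieces via the Littlewood--Paley square function, using Minkowski's inequality (legitimate since $q,r\ge2$) on the output side and its dual on the input side. This gives the homogeneous estimate, and the inhomogeneous estimate follows as in the Schr\"odinger case.
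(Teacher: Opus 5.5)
Your outline is the Keel--Tao argument: $TT^*$ with the dispersive bound, Hardy--Littlewood--Sobolev and Christ--Kiselev away from the endpoint, the dyadic bilinear interpolation at $q=2$, and frequency localization plus scaling with $\langle t\rangle^{-\frac{d-1}{2}}$ decay for the half-wave group. That is exactly what the paper relies on, since it gives no proof and simply cites \cite{Tao-stri}, so the proposal is correct and follows the same route. The one point to make explicit is that your Littlewood--Paley reassembly in part (2) needs $r<\infty$ and $\tilde r<\infty$, not just $q,r\ge 2$; this is implicit in the paper's definition of $\dot W^{s,p}$ only for $1<p<\infty$ and automatic for $d\ge 3$, and it is genuinely needed, because in $d=2$ the homogeneous estimate fails at $(q,r)=(4,\infty)$ (dyadic Knapp packets on $N\times N^{1/2}$ rectangles in a common direction all stay coherent at $x=(-t,0)$ for $|t|\lesssim 1$, so their $L^\infty_x$ norms add in $\ell^1$ rather than $\ell^2$).
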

		
		\begin{lem}[Schur's test]
			\label{lem:Schur}
			For any  $a>0$, let sequences $\{a_N\},\{b_N\}\in \ell^2_{N\in2^{\Z}},$ then we have
			\begin{align*}
				\sum_{N_1\leq N}\big(\frac{N_1}{N}\big)^a a_N b_{N_1}\les \|a_N\|_{\ell^2_{N\in2^{\Z}}}\|b_N\|_{\ell^2_{N\in2^{\Z}}}.
			\end{align*}
		\end{lem}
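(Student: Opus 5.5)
This is a discrete Schur test, and I would prove it by splitting into a scale sum and a shifted inner product. Write $N_1 = 2^{-k}N$ with $k\ge 0$ an integer. Since $N$ and $N_1$ both range over $2^{\Z}$, the constraint $N_1\le N$ is exactly $k\in\Z_{\ge0}$. The double sum then reorganizes as
\begin{align*}
\sum_{N_1\leq N}\Big(\frac{N_1}{N}\Big)^a a_N b_{N_1}=\sum_{k\ge0}2^{-ka}\sum_{N\in2^{\Z}}a_N\, b_{2^{-k}N}.
\end{align*}
This rearrangement is legitimate once absolute convergence is established, so I would first run the argument with $|a_N|$ and $|b_N|$ in place of $a_N$ and $b_N$.

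For fixed $k$, apply Cauchy--Schwarz to the inner sum over $N$. The map $N\mapsto 2^{-k}N$ is a bijection of $2^{\Z}$, so the shifted sequence has the same $\ell^2$ norm, and we get $\sum_N |a_N||b_{2^{-k}N}|\le \|a_N\|_{\ell^2_N}\|b_N\|_{\ell^2_N}$.

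It remains to sum the geometric series: $\sum_{k\ge0}2^{-ka}=(1-2^{-a})^{-1}<\infty$ precisely because $a>0$. This gives the bound with implicit constant $(1-2^{-a})^{-1}$, which depends only on $a$. The absolute version of the bound justifies the rearrangement, and applying it to the original signed or complex sequences yields the claimed inequality.

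There is no real obstacle. The only points needing care are the reindexing by the dyadic ratio and the fact that the constant depends on $a$, blowing up as $a\to0$. Equivalently, one could phrase the argument as Young's inequality on the group $\Z$, convolving $\ell^1$ against $\ell^2$.
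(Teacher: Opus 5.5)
Your proof is correct. Reindexing by the dyadic ratio $N_1=2^{-k}N$, applying Cauchy--Schwarz for each fixed $k$ (using that $N\mapsto 2^{-k}N$ is a bijection of $2^{\Z}$), and summing $\sum_{k\ge0}2^{-ka}=(1-2^{-a})^{-1}$ is the standard argument, with the absolute-value version justifying the rearrangement. The paper states this lemma without proof as a standard tool, so there is no proof in the paper to compare against.
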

		
		We also need the following Littlewood-Paley theory, see Remark 2.2.2 in \cite{Gra-14}.
		\begin{lem}[Square function estimate\label{lem:Little}]
			Let $1<p<\infty$. For any $a \in \R$, we have
			\EQ{
				\|f\|_{\dot{F}_{p}^{a, 2}}\sim \||\na|^{a}f\|_{L_x^p}.
			}
		\end{lem}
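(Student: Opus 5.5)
The statement in question is the square function estimate (Lemma~\ref{lem:Little}): $\|f\|_{\dot F^{a,2}_p}\sim\||\na|^a f\|_{L^p_x}$ for $1<p<\infty$. I will reduce to the case $a=0$ and then prove that case with Calder\'on--Zygmund vector-valued theory plus duality.

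\textbf{Reduction to $a=0$.} Apply the $a=0$ statement to $g=|\na|^a f$. Then
\[
\|N^aP_Nf\|_{L^p_x\ell^2_N}\sim\|P_N g\|_{L^p_x\ell^2_N},
\]
because $N^aP_N = \tilde P_N |\na|^a$, where $\tilde P_N$ is a Littlewood--Paley piece with multiplier $\varphi_N(\xi)(N/|\xi|)^{a}$. This is a smooth bump adapted to the annulus $|\xi|\sim N$, with the same symbol bounds as $\varphi_N$. So both directions below go through verbatim with $\tilde P_N$ in place of $P_N$. (Here $f$ is taken in $\mathscr S'$ modulo polynomials, consistent with the homogeneous spaces.)

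\textbf{Upper bound.} I will show $\|(P_Nf)_N\|_{L^p\ell^2}\les\|f\|_{L^p}$. Consider the operator $T:f\mapsto (P_Nf)_{N\in2^\Z}$, mapping scalar functions to $\ell^2$-valued functions.
\begin{enumerate}
\item On $L^2$ it is bounded by Plancherel, since $\sum_N\varphi_N^2\le 1$.
\item Its kernel $K(x)=(N^d\check\varphi(Nx))_N$ satisfies the vector-valued H\"ormander condition $\int_{|x|>2|y|}\|K(x-y)-K(x)\|_{\ell^2}\,dx\les1$. This is checked by summing the pieces $N|y|\le1$ (use the mean value bound $N|y|$) and $N|y|>1$ (use the tails $(N|x|)^{-M}$).
\item Vector-valued Calder\'on--Zygmund theory then gives weak $(1,1)$. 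Marcinkiewicz interpolation yields boundedness for $1<p\le2$.
\item For $2<p<\infty$, duality applied to the adjoint $(g_N)\mapsto\sum_N P_N g_N$ gives the range $p>2$, since that adjoint is also a CZ operator from $\ell^2$-valued to scalar functions.
\end{enumerate}

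\textbf{Lower bound.} Use duality with a fattened projection $\tilde P_N$ satisfying $\tilde P_NP_N=P_N$; for instance take $\tilde P_N=P_{N/2}+P_N+P_{2N}$. Then, with $p'$ the dual exponent,
\[
\langle f,g\rangle=\sum_N\langle P_Nf,\tilde P_Ng\rangle\le \|(P_Nf)\|_{L^p\ell^2}\,\|(\tilde P_Ng)\|_{L^{p'}\ell^2}.
\]
By the upper bound at exponent $p'$, the last factor is $\les\|g\|_{L^{p'}}$. Taking the supremum over $g$ with $\|g\|_{L^{p'}}\le1$ gives $\|f\|_{L^p}\les\|(P_Nf)\|_{L^p\ell^2}$. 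The identity $f=\sum_N P_N f$ used here holds modulo polynomials, which is the sense of the homogeneous space.

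\textbf{Main obstacle.} The only nontrivial step is the kernel estimate in the vector-valued CZ theory. It is standard, so I would simply cite Grafakos, as the paper does.
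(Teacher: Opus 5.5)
Your argument is correct, and it is the standard Littlewood--Paley proof: vector-valued Calder\'on--Zygmund bounds for $f\mapsto(P_Nf)_N$, duality for $p>2$ and for the reverse inequality, and absorbing $(N/|\xi|)^a$ into the bump when $a\neq0$. This is exactly what the paper's citation to Grafakos supplies; the paper gives no proof of its own. The one step stated too quickly is ``verbatim'' in the lower bound for $a\neq0$: the pieces with symbol $\varphi_N(\xi)(N/|\xi|)^a$ no longer sum to the identity, so pair them with the companion family with symbol $(|\xi|/N)^a(\varphi_{N/2}+\varphi_N+\varphi_{2N})(\xi)$, whose products with them sum to $1$ and which still obeys the upper bound at $p'$.
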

		
		
		The following is Coifman-Meyer's Multiplier Theorem, see \cite{Coifman-Meyer}.
		\begin{lem}
			\label{lem:Coif}
			Let $m\in  L^\infty(\R^{nd})$ be smooth away from the origin and satisfy that for any multi-indices
			$a=\{a_1,a_2,\cdots,a_n\}\in \Z^{nd}$ with  $0\leq a_j\leq 2d+1 $ and any $\xi_1,\cdots,\xi_n\in \R^d\setminus\{0\}$,
			$$
			\big|\partial_{\xi_1}^{a_1}\partial_{\xi_2}^{a_2}\cdots \partial_{\xi_n}^{a_n}m(\xi_1,\xi_2,\cdots,\xi_n)\big|
			\le C(a)(|\xi_1|+|\xi_2|+\cdots+|\xi_n|)^{-|a|}.
			$$
			Then for any $f_j,j=1,2,\cdots,n$,
			\begin{align*}
				&\left\|\int_{\xi=\xi_1+\cdots+\xi_n}  e^{i\xi\cdot x} m(\xi_1,\xi_2,\cdots,\xi_n)\widehat{f_1}(\xi_1)\widehat{f_2}(\xi_2)
				\cdots\widehat{f_n}(\xi_n)\,d\xi_1d\xi_2\cdots d\xi_n\right\|_{L^p(\R^d)}\\
				&\qquad \qquad \le C(p,p_1,\cdots,p_n)\|f_1\|_{L^{p_1}(\R^d)}\|f_2\|_{L^{p_2}(\R^d)}\cdots \|f_n\|_{L^{p_n}(\R^d)},
			\end{align*}
			where
			$$
			0< p< +\infty, 1<p_j\le+\infty \mbox{ for } j=1,2,\cdots,n, \frac1p=\frac1{p_1}+\frac1{p_2}+\cdots +\frac1{p_n}.
			$$
		\end{lem}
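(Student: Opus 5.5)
The plan is to reduce the multilinear estimate to a superposition of paraproducts with rapidly decaying coefficients, and then bound each paraproduct using the square function estimate (Lemma \ref{lem:Little}) together with the Hardy--Littlewood maximal function.

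\textbf{Step 1: Littlewood--Paley decomposition.} Decompose each $\widehat{f_j}$ as $\sum_{N_j}\varphi_{N_j}(\xi_j)\widehat{f_j}(\xi_j)$ and split the $n$-fold sum according to which frequency is largest, up to a fixed constant factor. By symmetry it suffices to treat the region where $N_1$ is maximal. Up to finitely many comparable cases, this leaves two configurations:
\begin{enumerate}[(a)]
\item the ``high--low'' case, with $N_j\ll N_1$ for all $j\ge2$;
\item the ``high--high'' case, where at least one other $N_j\sim N_1$.
\end{enumerate}
On a fixed dyadic block the symbol $m$ is supported where $|\xi_1|+\dots+|\xi_n|\sim N_1$. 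After rescaling by $N_1$, the derivative bounds for $|a_j|\le 2d+1$ show that $m(N_1\cdot)$ is a smooth function on a fixed annulus with uniform $C^{2d+1}$ bounds in each variable. Multiplying by a cutoff and expanding in a Fourier series on a cube containing the support gives
$$m(\xi_1,\dots,\xi_n)\,\tilde\varphi(\xi/N_1)=\sum_{k\in\Z^{nd}}c_k\prod_{j}e^{ik_j\cdot\xi_j/N_1}\tilde\varphi_j(\xi_j/N_1),$$
with $|c_k|\lesssim \prod_j\langle k_j\rangle^{-(2d+1)}$, uniformly in $N_1$. This summability in $k$ is exactly where the $2d+1$ derivatives are used.

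\textbf{Step 2: Bounding a single paraproduct.} For fixed $k$, each factor $e^{ik_j\cdot\xi_j/N_1}\tilde\varphi_j(\xi_j/N_1)\widehat{f_j}$ is a translated Littlewood--Paley piece. Its size is controlled pointwise by $\log^{C}(2+|k_j|)\,Mf_j$ (low-frequency factors) or by a shifted square function (high factor). The two configurations are handled as follows.
\begin{itemize}
\item In case (a), the output has frequency $\sim N_1$. By Lemma \ref{lem:Little} for $p>1$ (or the Hardy-space characterization for $p\le1$), we get
$$\|\cdot\|_{L^p}\lesssim\Big\|\Big(\sum_{N_1}|P_{N_1}f_1|^2\Big)^{1/2}\prod_{j\ge2}Mf_j\Big\|_{L^p}.$$
H\"older's inequality and the maximal theorem then give the bound whenever every $p_j>1$ (with $p_j=\infty$ trivial).
\item In case (b), we sum $\sum_{N_1}|P_{N_1}f_1|\,|P_{\sim N_1}f_2|\prod_{j\ge3}Mf_j$ directly, without orthogonality in the output. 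Cauchy--Schwarz in $N_1$ yields a product of two square functions, which is again bounded by H\"older.
\end{itemize}
The polynomial losses in $k$ are absorbed by the decay of $c_k$.

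\textbf{Main obstacle.} The hardest point is the range $p<1$, and more generally case (a) when the output is not in a space where the square function characterizes $L^p$. For $p\le1$, I would first try to prove the estimate into the Hardy space $H^p$: the high--low paraproduct has Fourier support in annuli, so its $H^p$ norm is controlled by the discrete square function, which is then bounded as above. If this becomes delicate, the fallback is multilinear interpolation. One proves the $L^{p}$ bounds for $p\ge1$ and the endpoint $L^{1}\times\cdots\to L^{1/n,\infty}$ weak-type estimate via a multilinear Calder\'on--Zygmund decomposition, using that the kernel of $m$ satisfies standard multilinear Calder\'on--Zygmund size and smoothness bounds (again from the $2d+1$ derivative bounds). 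Interpolation then gives the full range stated in Lemma \ref{lem:Coif}.
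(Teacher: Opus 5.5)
The paper does not prove this lemma; it only cites Coifman--Meyer. Your outline is the classical argument: Littlewood--Paley splitting, a Fourier-series expansion of the rescaled, localized symbol with $|c_k|\lesssim\prod_j\langle k_j\rangle^{-(2d+1)}$, and paraproduct bounds by square and maximal functions, using the $H^p$ characterization for $p\le 1$ in the high--low case. This works whenever every function carrying a top frequency has a finite exponent.

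\textbf{The gap is at the endpoint $p_j=+\infty$, which the statement explicitly allows.}

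\emph{Where it enters.} Your reduction ``by symmetry, $N_1$ is maximal'' relabels the functions, and the exponents move with them. So you must also treat the configuration in which the top-frequency function lies only in $L^\infty$.
\begin{itemize}
\item In case (a) your bound goes through $\|(\sum_N|P_Nf_1|^2)^{1/2}\|_{L^{p_1}}$.
\item In case (b) it goes through a product of two such square functions.
\end{itemize}
For $p_1=\infty$ neither is controlled by $\|f_1\|_{L^\infty}$, because the Littlewood--Paley square function maps $L^\infty$ only into $\mathrm{BMO}$. For example, $f(x)=|x|^{i\lambda}$ satisfies $P_Nf(0)=N^{-i\lambda}P_1f(0)$ with $P_1f(0)\neq 0$ for generic $\lambda$. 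Its square function therefore grows like $(\log(1/|x|))^{1/2}$ near the origin.

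\emph{Why your remarks do not cover it.} Your remark ``$p_j=\infty$ trivial'' applies only to the maximal-function factors. The fallback does not help either: multilinear Calder\'on--Zygmund theory plus interpolation from the weak-type bound $L^1\times\cdots\times L^1\to L^{1/n,\infty}$ yields only $1<p_j<\infty$. Interpolating between estimates with finite input exponents can never produce an $L^\infty$ input.

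\textbf{How to close it.} You need an ingredient beyond H\"older, square functions and maximal functions.
\begin{itemize}
\item \emph{One $L^\infty$ function at the top, $p>1$.} Dualize each paraproduct against $h\in L^{p'}$ and move the projection off $f_1$. The resulting sum is localized to annuli, so you can bound it in $H^1\subset L^1$ by square and maximal functions of the remaining factors. Then use $|\langle f_1,G\rangle|\le\|f_1\|_{L^\infty}\|G\|_{L^1}$.
\item \emph{General case, in particular $p\le 1$ where duality is unavailable.} One option is to use that $\sum_N|P_Nb|^2\,dx\,\delta_{t=N^{-1}}$ is a Carleson measure of norm $\lesssim\|b\|_{\mathrm{BMO}}^2\le\|b\|_{L^\infty}^2$. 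Combine this with tent-space and Carleson-embedding estimates for the nontangential maximal function of the low-frequency product. Alternatively, use the restricted weak-type multilinear interpolation of Muscalu--Tao--Thiele. This is how the full range $1<p_j\le\infty$, $0<p<\infty$ is obtained in the literature.
\end{itemize}

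Every application of the lemma in this paper, for instance the boundary term \eqref{est:I2-b}, places the $L^\infty$ factor at low frequency. There your argument suffices.

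A minor point: the pointwise bound for a translated Littlewood--Paley piece is $\lesssim\langle k_j\rangle^{d}Mf_j$, not logarithmic in $k_j$. The logarithm is an operator-norm bound for shifted maximal functions. The polynomial loss is still absorbed by the $\langle k_j\rangle^{-(2d+1)}$ decay, which is exactly why $2d+1$ derivatives suffice.
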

		
		The Coifman-Meyer multiplier theorem reduces to the Mihlin-H\"ormander
		multiplier theorem when \(n=1\) and \(1<p<\infty\).

	\begin{lem}(Bi-linear estimates for Schr\"odinger-wave)
	\label{lem:bi-W-N}
	Let $I\subset \R $ and $M,N \in 2^{\Z}$ with $N\nsim1$. Suppose that $f, g\in L^2(\R^3), $ then,
	\begin{align*}
	\|[e^{\pm it|\na|}f_M][e^{-\frac{1}{2}it\De}g_N]\|_{L^{2}_{t,x}(I)} \les  |1-N|^{-\frac{1}{2}}\min\{N,M\}\|f_M\|_{L^2}\|g_N\|_{L^2}.
	\end{align*}
\end{lem}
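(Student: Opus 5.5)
We argue via the space--time Fourier transform and a Cauchy--Schwarz/measure argument of Bourgain type, so the claimed bound reduces to a uniform estimate on the thickness of a level set of the combined phase. By time-translation invariance of both sides it suffices to take $I=\R$. Write $\widehat{f_M}$ supported in $\{|\xi_1|\sim M\}$ and $\widehat{g_N}$ in $\{|\xi_2|\sim N\}$. The space--time Fourier transform of $[e^{\pm it|\na|}f_M][e^{-\frac12 it\De}g_N]$ at $(\tau,\xi)$ is, up to constants,
\[
\int_{\R^3}\delta\big(\tau-h_\xi(\xi_1)\big)\,\widehat{f_M}(\xi_1)\,\widehat{g_N}(\xi-\xi_1)\dxio,\qquad h_\xi(\xi_1)\coloneqq \mp|\xi_1|+\tfrac12|\xi-\xi_1|^2,
\]
with signs adjusted to the conventions of the paper. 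By Cauchy--Schwarz in $\xi_1$ against the measure $\delta(\tau-h_\xi)$ and Plancherel in $(\tau,\xi)$, we get
\[
\|[e^{\pm it|\na|}f_M][e^{-\frac12 it\De}g_N]\|_{L^2_{t,x}}^2\les \sup_{\tau,\xi}\,\mathcal M(\tau,\xi)\;\|f_M\|_{L^2}^2\|g_N\|_{L^2}^2,
\]
where
\[
\mathcal M(\tau,\xi)\coloneqq\int_{E_\xi}\delta\big(\tau-h_\xi(\xi_1)\big)\dxio,\qquad E_\xi\coloneqq\{|\xi_1|\sim M,\ |\xi-\xi_1|\sim N\}.
\]
It therefore suffices to prove $\mathcal M(\tau,\xi)\les |1-N|^{-1}\min\{N,M\}^2$ uniformly in $(\tau,\xi)$.

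\textbf{Transversality.} We have $\nabla h_\xi(\xi_1)=\mp\frac{\xi_1}{|\xi_1|}-(\xi-\xi_1)$. Since $|\xi-\xi_1|\in[N/2,2N]$ and $N\nsim 1$ (say $N\ge 2^{5}$ or $N\le 2^{-5}$, with the dyadic cutoffs fixed accordingly), we get $|\nabla h_\xi|\ge \big||\xi-\xi_1|-1\big|\ges|1-N|$ on $E_\xi$. This is the only place the hypothesis $N\nsim1$ enters: the wave group velocity has unit length, and the Schr\"odinger group velocity has length $\sim N$.

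\textbf{Slicing.} Cover $E_\xi$ by at most six sets $E_\xi^{j,\pm}$ on which $\pm\partial_{j}h_\xi\ge c|1-N|$, which is possible since some coordinate of the gradient has size at least $|\nabla h_\xi|/\sqrt3$. On $E^{j,\pm}_\xi$ we apply Fubini, integrating first in the variable $\xi_1^{(j)}$ with the other two coordinates frozen. Along each such line, the $\delta$-integral equals $\sum 1/|\partial_j h_\xi|$ over the roots of $h_\xi=\tau$ in the slice. Each connected component of the slice contributes at most one root by monotonicity, so this is at most $C|1-N|^{-1}$, provided the number of components is $O(1)$. The frozen coordinates range over the projection of $E_\xi$. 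This projection lies both in a disc of radius $2M$ (from $|\xi_1|\le 2M$) and in a disc of radius $2N$ (from $|\xi_1-\xi|\le 2N$), so it has area $\les\min\{N,M\}^2$. Combining gives $\mathcal M\les |1-N|^{-1}\min\{N,M\}^2$, and taking square roots yields the lemma.

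\textbf{Main obstacle.} The delicate step is the uniform bound on the number of connected components of a line slice of $E_\xi^{j,\pm}$, equivalently on the number of roots of $h_\xi=\tau$ along a line. My first attempt is semi-algebraic: along a line, $h_\xi$ is a quadratic polynomial plus $\mp$ the square root of a quadratic. Squaring shows that $h_\xi=\tau$ has at most $4$ roots, and the sign conditions $\pm\partial_jh_\xi>0$ and the annular constraints cut the line into $O(1)$ intervals. This gives a bound independent of $\xi,\tau,M,N$. If this becomes messy, the fallback is to smooth the cutoffs and use the coarea formula instead. That requires bounding the surface area of $\{h_\xi=\tau\}\cap B(0,2M)\cap B(\xi,2N)$ by $\les\min\{N,M\}^2$, which again reduces to the same semi-algebraic complexity bound via Crofton's formula.
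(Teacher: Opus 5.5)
Your proposal is correct and is essentially the paper's argument: the same transversality bound $\big|\mp\tfrac{\xi_1}{|\xi_1|}-(\xi-\xi_1)\big|\ge\big||\xi-\xi_1|-1\big|\gtrsim|1-N|$, the same splitting according to which coordinate of the gradient dominates, and the same Fubini step with the two transverse variables confined to a disc of radius $\lesssim\min\{M,N\}$. You phrase it as Cauchy--Schwarz against the $\delta$-measure, where the paper uses duality and the change of variables $(\xi,\eta)\mapsto(a,\xi',\nu)$. Your root count (the squared equation is a genuine quartic, so there are at most four roots on any line) already resolves your \emph{main obstacle}, so you do not need to count components or fall back on the coarea formula; it also supplies the bounded-multiplicity fact that the paper's change of variables tacitly assumes, and your intersection-of-discs remark gives $\min\{M,N\}$ in one pass rather than the paper's two.
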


The proof of Lemma~\ref{lem:bi-W-N} is classical, and we give it in
Appendix~\ref{sec:appendix}. For a general multi-scale bi-linear restriction
theory for general phases, see Candy~\cite{Candy2018Ann.PDE}. In this paper,
we use the corresponding $U^2$-space version, which follows easily from the
transference principle:
\begin{cor}
\label{lem:bi-linear-W}
Let $I\subset \R $  and $M,N\in 2^{\Z}$ with $N\ll 1$. Suppose that  $\hat{w}_{\pm}(t,\cdot)$ is supported on $\{\eta:|\eta|\sim M\} $ and $\hat{v}(t,\cdot)$ is supported on $\{\xi:|\xi|\sim N\}$. Then,
\begin{align}
\|P_Mw_{\pm}P_Nv\|_{L^2_{t,x}(I)}\les \min\{N,M\}\|P_Mw_{\pm}\|_{U_{\pm |\na|}^{2}(I;L^2)}\|P_Nv\|_{U_{\De}^{2}(I;L^2)}.
\end{align}
\end{cor}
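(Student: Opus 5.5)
We deduce the corollary from Lemma~\ref{lem:bi-W-N} by the standard transference principle for $U^2$ spaces (cf.\ \cite{Hadac-Herr-Koch-Poincare}). The first step is to record that in the regime $N\ll1$ we have $|1-N|^{-1/2}\sim 1$. Lemma~\ref{lem:bi-W-N} then gives, uniformly in the interval $I$,
\[
\|[e^{\mp it|\na|}f_M][e^{-\frac12 it\De}g_N]\|_{L^2_{t,x}(I)}\les \min\{N,M\}\|f_M\|_{L^2}\|g_N\|_{L^2}
\]
for free solutions. Here the sign convention is chosen so that free solutions lie in $U^2_{\pm|\na|}$ and $U^2_{\De}$, by Lemma~\ref{lem:linearforup}; the sign is irrelevant to the estimate.

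The second step reduces to atoms. By the atomic definition \eqref{def:Up}, together with the facts that the left side is a bilinear form and the $L^2_{t,x}$ norm satisfies the triangle inequality, it suffices to prove the estimate when $e^{\pm it|\na|}P_Mw_{\pm}$ and $e^{\frac12 it\De}P_Nv$ are $U^2$-atoms. Write them as
\[
a=\sum_j \mathbbm{1}_{[t_{j-1},t_j)}\phi_{j-1}, \qquad b=\sum_k \mathbbm{1}_{[s_{k-1},s_k)}\psi_{k-1},
\]
with $\sum_j\|\phi_j\|_{L^2}^2=\sum_k\|\psi_k\|_{L^2}^2=1$. The Fourier supports are preserved by inserting $P_M$ and $P_N$, since these commute with the propagators.

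Next, refine the two partitions into a common partition $\{r_l\}$. On each subinterval $J_l=[r_{l-1},r_l)$, the product is $[e^{\mp it|\na|}\phi_{j(l)}][e^{-\frac12 it\De}\psi_{k(l)}]$, a product of free solutions. Since the $J_l$ are disjoint, the square of the $L^2_{t,x}(I)$ norm equals the sum over $l$ of the squared local norms. Applying the free estimate on each $J_l\cap I$ gives the bound
\[
\min\{N,M\}^2\sum_l \|\phi_{j(l)}\|_{L^2}^2\|\psi_{k(l)}\|_{L^2}^2 .
\]
Each pair $(j,k)$ occurs for at most one $l$, so this sum is at most $\sum_{j,k}\|\phi_j\|^2\|\psi_k\|^2 = 1$. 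This is the key point: the product structure makes the $\ell^2$ summation close with the $U^2$ (not $U^1$) atoms.

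The final step sums over atomic decompositions of both functions, using bilinearity and the triangle inequality, and takes the infimum over those decompositions. This yields the corollary on $\R$. The version on a general interval $I$ follows from the definition of restriction spaces, extending by an element of near-minimal norm.

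The main point needing care is the atom-level computation above, specifically the disjointness/$\ell^2$ counting. Everything else is formal.
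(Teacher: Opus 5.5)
Your argument is correct and is the transference principle the paper invokes: the paper gives no details beyond citing it, and your atom-level computation (common refinement, disjointness in time, and counting each pair $(j,k)$ at most once in the $\ell^2$ sum) is the standard Hadac--Herr--Koch proof specialized to $p=2$. One small point: since $\varphi_M^2\neq\varphi_M$, localize the atoms with a slightly fattened projection $\tilde P_M$ satisfying $\tilde P_M P_M=P_M$, which is bounded on $L^2$, rather than with $P_M$ itself; this is harmless because the free bilinear estimate only uses Fourier support.
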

		
The Kato-Ponce inequality will be frequently used in this paper. The result was originally proved in \cite{Kato-Ponce} and then extended to the endpoint case in \cite{BoLi-KatoPonce, Li-KatoPonce}.
		\begin{lem}[Kato-Ponce's inequality] \label{lem:kato-Ponce}
			For $s>0$, $1<p\le \infty$, $1<p_1,p_3< \infty$, and $1<p_2,p_4 \le \infty$ satisfying $\frac1p=\frac1{p_1}+\frac1{p_2}$, and $\frac1p=\frac1{p_3}+\frac1{p_4}$, the following inequality holds:
			\begin{align*}
				\big\||\nabla|^s(fg)\big\|_{L^p}\le C\big( \||\nabla|^sf\|_{L^{p_1}}\|g\|_{L^{p_2}}+ \||\nabla|^sg\|_{L^{p_3}}\|f\|_{L^{p_4}}\big),
			\end{align*}
			where the constant $C>0$ depends on $s,p,p_1,p_2,p_3,p_4$.
		\end{lem}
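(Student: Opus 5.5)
The plan is the standard paraproduct argument: decompose the product into Littlewood--Paley interaction pieces, move $|\nabla|^s$ onto the highest-frequency factor, and bound each piece with the Coifman--Meyer theorem (Lemma~\ref{lem:Coif}). Since $\frac1p=\frac1{p_1}+\frac1{p_2}$ with $p_1<\infty$, we automatically have $1<p<\infty$, and the range of Lemma~\ref{lem:Coif} covers $p_2=\infty$ and $p_4=\infty$.

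Write
\[
fg=\sum_{N}P_Nf\,P_{\ll N}g+\sum_{N}P_{\ll N}f\,P_Ng+\sum_{N_1\sim N_2}P_{N_1}f\,P_{N_2}g=:\Pi_1+\Pi_2+\Pi_3 .
\]

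\textbf{The piece $\Pi_1$.} The output frequency is $\sim N$. Writing $P_Nf=|\nabla|^{-s}P_N(|\nabla|^sf)$, the operator $|\nabla|^s\Pi_1$ is a bilinear multiplier applied to $(|\nabla|^sf,\,g)$ with symbol
\[
m_1(\xi_1,\xi_2)=\sum_N \frac{|\xi_1+\xi_2|^s}{|\xi_1|^s}\,\varphi_N(\xi_1)\,\chi_N(2^5\xi_2).
\]
On the support, $|\xi_1+\xi_2|\sim|\xi_1|\sim|\xi_1|+|\xi_2|$, so $|\xi_1+\xi_2|^s$ is smooth there. Each summand then satisfies the symbol bounds of Lemma~\ref{lem:Coif} uniformly in $N$, and the supports have bounded overlap. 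Lemma~\ref{lem:Coif} gives $\||\nabla|^s\Pi_1\|_{L^p}\lesssim \||\nabla|^sf\|_{L^{p_1}}\|g\|_{L^{p_2}}$.

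\textbf{The piece $\Pi_2$.} This is the symmetric case, with the roles of $f$ and $g$ exchanged, and gives $\||\nabla|^sg\|_{L^{p_3}}\|f\|_{L^{p_4}}$.

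\textbf{The piece $\Pi_3$ (main obstacle).} Here the output frequency $\xi_1+\xi_2$ can be near $0$, where $|\xi|^s$ is not smooth for non-even $s$. The symbol $|\xi_1+\xi_2|^s|\xi_1|^{-s}$ therefore fails the Coifman--Meyer bounds.

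To handle this, localize the output as well:
\[
|\nabla|^s\Pi_3=\sum_{K}\sum_{N\gtrsim K}P_K|\nabla|^s\big(P_Nf\,P_{\sim N}g\big).
\]
For fixed ratio $\lambda=K/N\lesssim1$, I regroup the sum over $N$ into one bilinear operator acting on $(|\nabla|^sf,g)$, with symbol
\[
\lambda^s\sum_N \frac{|\xi_1+\xi_2|^s}{(\lambda|\xi_1|)^s}\,\varphi_{\lambda N}(\xi_1+\xi_2)\,\varphi_N(\xi_1)\,\tilde\varphi_N(\xi_2).
\]
Each summand is smooth with the Coifman--Meyer bounds at scale $N$. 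Derivatives in the output direction cost $K^{-1}=(\lambda N)^{-1}$ rather than $N^{-1}$, so this has to be controlled; the dependence on $\lambda$ is at worst polynomial, $\lambda^{-C}$.

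I would first try to avoid that loss by writing $P_K$ as a convolution with an $L^1$-normalized kernel applied after the product. That is,
\[
P_K|\nabla|^s\Big(\sum_N P_Nf\,P_{\sim N}g\Big)=K^s\,\tilde P_K\Big(\sum_N P_Nf\,P_{\sim N}g\Big),
\]
where $\tilde P_K$ is bounded on $L^p$ uniformly in $K$. The inner sum, restricted to a fixed ratio $\lambda$, is a Coifman--Meyer product with symbol $\lambda^s|\xi_1|^{-s}\varphi_N\tilde\varphi_N$ after factoring out $N^s$. This gives an operator bound $\lesssim\lambda^{s}$, uniform in everything else.

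To sum over $K$ for fixed $\lambda$, I would use the square function estimate (Lemma~\ref{lem:Little}) together with bounded overlap. Finally, summing the geometric series in $\lambda=2^{-j}$, $j\ge0$, which converges because $s>0$, gives
\[
\||\nabla|^s\Pi_3\|_{L^p}\lesssim\||\nabla|^sf\|_{L^{p_1}}\|g\|_{L^{p_2}}.
\]
This step — uniform Coifman--Meyer bounds for the high–high pieces, including the endpoint $p_2=\infty$, and the summation over $K$ — is where I expect the real work to lie.
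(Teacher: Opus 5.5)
The paper does not prove this lemma; it quotes it from Kato--Ponce and Bourgain--Li/Li. Your splitting into $\Pi_1,\Pi_2,\Pi_3$ is correct, and so is your treatment of $\Pi_1,\Pi_2$ via Lemma~\ref{lem:Coif}. The high--high term, however, is not closed, and the gap is exactly at the step you flag.

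What your argument produces for a single output block is an operator-norm bound
\[
\big\|P_K|\nabla|^s\big(P_Nf\,P_{\sim N}g\big)\big\|_{L^p}\lesssim \lambda^s\,\|N^sP_Nf\|_{L^{p_1}}\|g\|_{L^{p_2}},\qquad \lambda=K/N .
\]
The version with $\||\nabla|^sf\|_{L^{p_1}}$ on the right, uniform in $K$, is obviously not summable. The $N$-localized bound above cannot be summed over $K$ at fixed $\lambda$ either. Bounded overlap only reduces $\|\sum_K G_K\|_{L^p}$ to $\|(\sum_K|G_K|^2)^{1/2}\|_{L^p}$. Bounding that by an $\ell^2$ sum of the $L^p$ norms of the pieces is possible only for $p\ge 2$ (Minkowski). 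Even then it yields $(\sum_N\|N^sP_Nf\|_{L^{p_1}}^2)^{1/2}=\|f\|_{\dot B^s_{p_1,2}}$, which is not controlled by $\||\nabla|^sf\|_{L^{p_1}}$ when $p_1>2$. Applying Coifman--Meyer (or Hölder) piece by piece throws away exactly the information needed.

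The missing ingredient is a pointwise estimate inside the square function, with $g$ controlled by its Hardy--Littlewood maximal function $Mg$. For example, pair with $h\in L^{p'}$ (note $1<p'<\infty$). Two facts are needed: $P_Nf\,P_{\sim N}g$ has frequencies $\lesssim N$, and $|\nabla|^sP_K=K^s\tilde P_K$, where $\tilde P_K$ is the multiplier $(|\xi|/K)^s\varphi_K(\xi)$. These give
\[
\big|\langle|\nabla|^s\Pi_3,h\rangle\big|\le\int\sum_{N}\sum_{K\lesssim N}\Big(\frac KN\Big)^s\,|N^sP_Nf|\,|P_{\sim N}g|\,|\tilde P_Kh|\,dx .
\]
Now use $\sup_N|P_{\sim N}g|\lesssim Mg$ pointwise. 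Apply Lemma~\ref{lem:Schur} pointwise in $x$; this is where $s>0$ is used. Then Hölder with exponents $p_2,p_1,p'$ gives
\begin{align*}
\big|\langle|\nabla|^s\Pi_3,h\rangle\big|
&\lesssim \|Mg\|_{L^{p_2}}\Big\|\Big(\sum_N|N^sP_Nf|^2\Big)^{1/2}\Big\|_{L^{p_1}}\Big\|\Big(\sum_K|\tilde P_Kh|^2\Big)^{1/2}\Big\|_{L^{p'}}\\
&\lesssim \|g\|_{L^{p_2}}\,\||\nabla|^sf\|_{L^{p_1}}\,\|h\|_{L^{p'}}.
\end{align*}
The last step uses the maximal theorem, which is valid also for $p_2=\infty$, and the square function estimates: Lemma~\ref{lem:Little} at $p_1$, and its standard variant for the bumps $\tilde P_K$ at $p'$.

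Alternatively, you can keep your output decomposition and combine $|\tilde P_KF|\lesssim MF$ with the Fefferman--Stein vector-valued maximal inequality. In either version, what makes the high--high sum converge and covers the endpoint $p_2=\infty$ is this pointwise maximal-function control of $g$, not a uniform multiplier bound.
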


		In this paper, we will rely on the local well-posedness of the 3D cubic nonlinear Schr\"odinger equation, see \cite{BookNLS2003}:
	\begin{lem}\label{lem-NLS}
	For any $v_0 \in H^1$, there exists $T=T(\|v_0\|_{H^1})>0$ such that the Cauchy problem
	\begin{equation}\label{eq:NLS}
	\begin{cases}
	2i\partial_t v - \Delta v = |v|^2 v,\\[2mm]
	v(0,x) = v_0(x),
	\end{cases}
	\end{equation}
	admits a unique solution 
	$	v\in X^1([-T,T])
	$	satisfying
	$	\|v\|_{X^1([-T,T])}\lesssim \|v_0\|_{H^1}.$
	Moreover, if $v_0\in H^s$ for some $s>1$, then
	$v\in X^s([-T,T])$ and satisfies
$$	\|v\|_{X^s([-T,T])}\lesssim \|v_0\|_{H^s}.$$

	\end{lem}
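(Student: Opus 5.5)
The proof is a standard contraction argument in Strichartz spaces, based on the Duhamel formulation
\begin{align*}
v(t)=e^{-\frac12 it\Delta}v_0+\frac{i}{2}\int_0^t e^{-\frac12 i(t-s)\Delta}\big(|v|^2v\big)(s)\ds .
\end{align*}
Here $2i\partial_t v-\Delta v=F$ means $\partial_t v=-\frac{i}{2}\Delta v-\frac{i}{2}F$, so the free propagator is $e^{-\frac12 it\Delta}$, and with $F=|v|^2v$ the Duhamel coefficient is $+\frac{i}{2}$ as written. Let $\Phi(v)$ denote the right-hand side. I would work on the ball
\begin{align*}
B=\{v\in X^1([-T,T]):\ \|v\|_{X^1([-T,T])}\le 2C_0\|v_0\|_{H^1}\},
\end{align*}
where $C_0$ is the constant in the homogeneous Strichartz estimate \eqref{hom-Nls-str}, applied with the admissible pairs $(\infty,2)$ and $(2,6)$ (these define the $X^1$ norm; since $d=3$, $(2,6)$ is the non-forbidden endpoint). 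Because $\langle\nabla\rangle$ commutes with the propagator, the linear part is bounded by $C_0\|v_0\|_{H^1}$ in $X^1$.

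For the nonlinear part, I would use the inhomogeneous estimate \eqref{inhom-Nls-str} with dual pair $(\tilde q,\tilde r)=(\infty,2)$, i.e. placing the forcing in $L^1_tL^2_x$. This gives
\begin{align*}
\Big\|\int_0^t e^{-\frac12 i(t-s)\Delta}|v|^2v\ds\Big\|_{X^1}\lesssim \|\langle\nabla\rangle(|v|^2v)\|_{L^1_tL^2_x}.
\end{align*}
By the Kato--Ponce inequality (Lemma \ref{lem:kato-Ponce}), Hölder and Sobolev ($H^1\hookrightarrow L^6$), I would bound pointwise in time
\begin{align*}
\|\langle\nabla\rangle(|v|^2v)\|_{L^2_x}\lesssim \|\langle\nabla\rangle v\|_{L^6_x}\|v\|_{L^6_x}^2\lesssim \|\langle\nabla\rangle v\|_{L^6_x}\|v\|_{H^1}^2 .
\end{align*}
Integrating in time and using Cauchy--Schwarz, this yields $\lesssim T^{1/2}\|v\|_{X^1}^3$. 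Hence $\Phi$ maps $B$ to itself once $T^{1/2}\|v_0\|_{H^1}^2\ll1$. The difference $|v|^2v-|\tilde v|^2\tilde v$ is handled the same way, giving a contraction with factor $CT^{1/2}\|v_0\|_{H^1}^2$. This produces a unique fixed point with $T=T(\|v_0\|_{H^1})$ and $\|v\|_{X^1}\lesssim\|v_0\|_{H^1}$. Uniqueness in the full class $X^1$, not only in $B$, follows by repeating the difference estimate on short subintervals. Continuity in time with values in $H^1$ is standard from the Duhamel formula.

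For persistence of regularity, I would repeat the estimate with $\langle\nabla\rangle^s$ in place of $\langle\nabla\rangle$. Kato--Ponce gives
\begin{align*}
\|\langle\nabla\rangle^s(|v|^2v)\|_{L^2_x}\lesssim \|\langle\nabla\rangle^s v\|_{L^6_x}\|v\|_{L^6_x}^2 ,
\end{align*}
so the estimate is \emph{linear} in the high norm:
\begin{align*}
\|v\|_{X^s}\le C\|v_0\|_{H^s}+CT^{1/2}\|v\|_{X^1}^2\|v\|_{X^s}.
\end{align*}
Since $\|v\|_{X^1}\lesssim\|v_0\|_{H^1}$, after possibly shrinking $T$ (still depending only on $\|v_0\|_{H^1}$) the last term is absorbed, which gives $\|v\|_{X^s}\lesssim\|v_0\|_{H^s}$ on the same interval. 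The only point needing care is to make this rigorous: I would either run the contraction directly in $X^s$ with the same smallness condition, or argue on a smooth approximating sequence and pass to the limit. I expect no genuine obstacle; the mildly delicate step is exactly this persistence argument, ensuring the existence time does not depend on $s$.
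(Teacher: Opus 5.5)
Your argument is correct. It is the standard Strichartz contraction (Kato's method, with persistence of regularity coming from the nonlinear bound being linear in the $X^s$ norm), which the paper does not prove itself but imports from Cazenave's monograph \cite{BookNLS2003}.

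There are two minor points. First, the Duhamel coefficient is $-\frac{i}{2}$, not $+\frac{i}{2}$, as your own formula $\partial_t v=-\frac{i}{2}\Delta v-\frac{i}{2}F$ shows; this is harmless for every estimate. Second, the absorption condition involves the $s$-dependent Kato--Ponce constant, so shrinking $T$ makes it depend on $s$ as well, contrary to the statement. Instead of shrinking, iterate the linear $X^s$ bound over finitely many subintervals of $[-T,T]$, each of length depending on $s$ and $\|v_0\|_{H^1}$, using the global bound $\|v\|_{X^1([-T,T])}\lesssim\|v_0\|_{H^1}$. This keeps the same $T$, at the cost of a constant depending on $s$ and $\|v_0\|_{H^1}$.
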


			\vspace{0.5cm} 
		\section{Local well-posedness for the Zakharov system}
		\label{sec:local}
		
		\vspace{0.5cm}
		
		In this section, we establish the local well-posedness of the Zakharov system \eqref{equ-Z} uniformly in $\al$, stated in Proposition \ref{prop:lwpZ}. This will also be used later to overcome derivative-loss in the analysis for wave-profile remainder $q$ in \eqref{equ-r,q}.

%

		
		\subsection{Local well-posedness when $\al=1$}
		Local well-posedness for the Cauchy problem of the Zakharov system in the case of sound speed $\al=1$:
		\begin{align}
			\label{equ-z}\tag{$\text{Zak}_1$}
			\left\{
			\begin{aligned}
				&2i\partial_{t}u-\Delta u=-nu, \\
				&\partial_{tt}n-\De n=\De|u|^2,
			\end{aligned}
			\right.
		\end{align}
		with initial data $(u_0, n_0, n_1)\in H^k \times H^{l}\times H^{l-1} $ has been extensively studied in the literature, see, e.g., \cite{Bourgain1996lwp, ChenWu2021, Tsutsumi1997, Sanwal2022} and the references therein. In this paper, we need the following local well-posedness result for the 3D Zakharov system, which is included in the classical work \cite{Tsutsumi1997} by  Ginibre, Tsutsumi, and Velo:
		
		\begin{lem}[LWP for 3D \eqref{equ-z}, \cite{Tsutsumi1997}]
			\label{lem:lwp97}
			Assume that $(k,l)\in \R^2$ such that
			\begin{align*}
				0\le l\leq k\leq l+1; \quad 2k\geq l+1.
			\end{align*}
			Then, for any initial data $(u_0, n_0, n_1)\in H^k \times H^{l}\times H^{l-1}$, there exists $T_0>0$ that depends on $\big\|(u_0,n_0,n_1)\big\|_{H^k \times H^{l}\times H^{l-1}}$ such that the Zakharov system \eqref{equ-z} admits a unique solution  $(u,n,n_t) \in C([0,T_0];H^k\times H^l\times H^{l-1})$ such that
			\begin{align}
				\label{Tsulwp}
				\norm{(u,n,n_t)}_{L_t^\I([0,T_0];H^k\times H^l\times H^{l-1})} \lesssim  \norm{(u_0,n_0,n_1)}_{H^k\times H^l\times H^{l-1}}.
			\end{align}
		\end{lem}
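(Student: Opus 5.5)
We follow the Bourgain-space strategy of \cite{Tsutsumi1997}: first-order reduction of the wave equation, Duhamel formulation in $X^{s,b}$ spaces, and a contraction on a short time interval. Set $\omega=\jb{\nabla}$ and introduce $n_\pm\coloneqq n\pm i\omega^{-1}n_t$. Using $\omega$ rather than $|\nabla|$ avoids any low-frequency singularity, since $n_1\in H^{l-1}$ only. Then \eqref{equ-z} becomes
\begin{align*}
2i\partial_t u-\Delta u=-\tfrac12(n_++n_-)u,\qquad (i\partial_t\mp\omega)n_\pm=\pm\,\omega^{-1}\Delta|u|^2 \mp \tfrac12\omega^{-1}(n_++n_-),
\end{align*}
where the last term comes from $\omega^2-|\nabla|^2=1$ and is a bounded zeroth-order perturbation. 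We work in the spaces $X_S^{k,b}$ with weight $\jb{\xi}^k\jb{\tau+\frac12|\xi|^2}^b$ for $u$, and $X_{W_\pm}^{l,b}$ with weight $\jb{\xi}^l\jb{\tau\pm|\xi|}^b$ for $n_\pm$, where $b=\frac12+$. By the standard linear estimates in these spaces, with a time cutoff $\psi_T$, the Duhamel integral gains a factor $T^{\theta}$ when the nonlinearity is placed in the space with exponent $b-1+\theta$.

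The proof then reduces to two bilinear estimates.

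\textbf{Schr\"odinger nonlinearity.} We need
\begin{align*}
\|n_\pm u\|_{X_S^{k,b'-1}}\les \|n_\pm\|_{X_{W_\pm}^{l,b}}\|u\|_{X_S^{k,b}}.
\end{align*}

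\textbf{Wave nonlinearity.} We need
\begin{align*}
\|\omega^{-1}\Delta(u_1\bar u_2)\|_{X_{W_\pm}^{l,b'-1}}\les \|u_1\|_{X_S^{k,b}}\|u_2\|_{X_S^{k,b}},
\end{align*}
with $b'>b$, so that a power $T^{b'-b}$ is available.

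Both estimates will be proved by duality and a dyadic decomposition in the frequencies $N_0,N_1,N_2$ and in the modulations. The key input is the resonance identity. Modulo signs, the maximal modulation is bounded below by
\begin{align*}
\bigl|\tfrac12|\xi|^2-\tfrac12|\xi-\eta|^2\mp|\eta|\bigr|.
\end{align*}
This quantity is $\gtrsim N_{\max}^2$ in the high--high$\to$low and high--low interactions when the Schr\"odinger frequencies are not comparable. It is $\gtrsim N_{S}|\eta|$ in the ``parallel'' case $|\xi|\sim|\xi-\eta|\gg|\eta|$ away from the cone $\bigl||\xi|-|\xi-\eta|\bigr|\approx|\eta|$.

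\textbf{Reduction to $L^2$ Strichartz-type bounds.} In each region the high modulation absorbs the derivative imbalance, namely the factor $\jb{\xi}^{k-l}$ in the first estimate and the $|\nabla|$ together with the $\jb{\xi}^{l-k}$ weight in the second. What remains is bounded by the $X^{0,b}$ versions of the $L^4_{t,x}$ and $L^2_tL^6_x$ Strichartz estimates (Lemma \ref{lem:stri}, transferred). We also use the Schr\"odinger--wave bilinear estimate (Lemma \ref{lem:bi-W-N}) to treat the genuinely resonant region, where the modulation gives nothing. Bookkeeping the powers of $N_{\max},N_{\min}$ in each case yields exactly the admissible region:
\begin{itemize}
\item[$\bullet$] $k\le l+1$ is the limit in the first estimate when $n$ is at low frequency and $u$ is high;
\item[$\bullet$] $l\le k$ and $2k\ge l+1$ come from the second estimate in the high--high$\to$high and high--high$\to$low Schr\"odinger interactions, respectively;
\item[$\bullet$] $l\ge0$ is needed for the low-frequency wave component.
\end{itemize}

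\textbf{Closing the contraction.} Choosing $T_0$ with $T_0^{\theta}\|(u_0,n_0,n_1)\|_{H^k\times H^l\times H^{l-1}}\ll1$, the Duhamel map is a contraction on a ball of $X_S^{k,b}\times X_{W_+}^{l,b}\times X_{W_-}^{l,b}$ restricted to $[0,T_0]$. This gives existence, uniqueness in that class, continuous dependence, and, via $X^{s,b}\hookrightarrow C_tH^s$ for $b>\frac12$, the bound \eqref{Tsulwp}. Finally we recover $(n,n_t)$ from $n_\pm$, which yields $(n,n_t)\in C([0,T_0];H^l\times H^{l-1})$.

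The main obstacle is the wave bilinear estimate at the endpoint $2k=l+1$ in the high--high$\to$low region near the resonant cone. There the full derivative $|\nabla|$ on $|u|^2$ must be recovered from modulation and transversality alone. We would first try the dyadic bilinear estimate of Lemma \ref{lem:bi-W-N}: it gives a gain of $\min\{N,M\}$ times $|1-N|^{-1/2}$, applied after decomposing into angular sectors, in combination with the lower bound $\gtrsim N_S|\eta|$ on the modulation off the cone.
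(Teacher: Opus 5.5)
The paper gives no proof of this lemma: it is quoted from \cite{Tsutsumi1997}. Your overall $X^{s,b}$ strategy is that of the reference. In the setting you fix, however, the argument cannot cover the whole stated range. You take all solution exponents $b=\frac12+$, put the nonlinearities in $X^{\cdot,b'-1}$ with $b'>b$, and get smallness from $T^{b'-b}$. With these choices your Schr\"odinger bilinear estimate $\|n_\pm u\|_{X_S^{k,b'-1}}\lesssim\|n_\pm\|_{X_{W_\pm}^{l,b}}\|u\|_{X_S^{k,b}}$ is false on the edge $k=l+1$, which belongs to the stated range.

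Here is a counterexample. Let $\hat n_+$ be the indicator of $\{|\eta-Ne_1|\le1,\ |\tau+|\eta||\le1\}$ and $\hat u$ the indicator of $\{|\xi|\le1,\ |\tau+\frac12|\xi|^2|\le1\}$. Then $\|n_+\|_{X_{W_+}^{l,b}}\sim N^l$ and $\|u\|_{X_S^{k,b}}\sim1$. On the other hand, $\widehat{n_+u}$ is of size $\sim1$ on a set of measure $\sim1$ at spatial frequency $\sim N$ and temporal frequency $\sim N$, hence at Schr\"odinger modulation $\sim N^2$. So $\|n_+u\|_{X_S^{k,b'-1}}\sim N^{k-2(1-b')}$, and the ratio is $N^{k-l-2+2b'}=N^{2b'-1}\to\infty$ when $k=l+1$. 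No transversality gain is available, because the low-frequency factor has $O(1)$ frequency volume.

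Equivalently, the Duhamel term built from these two free waves has $X_S^{k,b}$-norm $\sim N^{2b-1}$ relative to the inputs, even though its $C_tH^k$-norm is only $O(N^{-1})$. So the contraction cannot close in this space.

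This is exactly the interaction that produces $k\le l+1$: $n$ at high frequency, $u$ at low frequency, with the large modulation carried by the output. The output weight $\langle\sigma\rangle^{-(1-b')}$, with $1-b'<\frac12$, recovers strictly less than the one derivative $\langle\xi\rangle^{k-l}=N$. You attribute this constraint instead to low-frequency $n$ and high-frequency $u$, where there is no derivative imbalance at all. The edge matters: the line $k=l+1$ contains the energy space $(1,0)$. It is also precisely where the paper applies the lemma, since data in $\mathcal{H}^{l+1}$ means $(k,l)=(l+1,l)$ in Corollary \ref{lem:lwp1}.

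To reach this edge you need an endpoint setting. One option is a Schr\"odinger exponent exactly $\frac12$ with an $\ell^1$-in-modulation refinement, e.g.\ Besov-type $X^{k,\frac12,1}$ and $X^{k,-\frac12,1}$, or an auxiliary $\|\langle\xi\rangle^k\hat u\|_{L^2_\xi L^1_\tau}$ norm, to keep $u\in C_tH^k$. Smallness must then come from something other than $T^{b'-b}$, for instance from time localization of the low-frequency factor: $\|P_{\lesssim1}u\|_{L^2_tL^\infty_x([0,T])}\lesssim T^{1/2}\|u\|_{L^\infty_tL^2_x}$.

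Apart from this, the bilinear estimates, which are the entire content of the lemma, are only asserted. The case you identify as the main obstacle, the resonant high--high$\to$low interaction at $2k=l+1$, is left at ``we would first try''. Lemma \ref{lem:bi-W-N} excludes Schr\"odinger frequencies $\sim1$. After transference it also needs both factors at modulation regularity $>\frac12$, whereas in your duality argument the wave function lies only in $X_W^{0,1-b'}$; so it can be used only after redistributing modulation weights case by case.

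A minor point: the reduced equation should read $(i\partial_t\mp\omega)n_\pm=\mp\omega^{-1}\Delta|u|^2\mp\frac12\omega^{-1}(n_++n_-)$.
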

		
		\begin{remark}
			One may notice that there is a difference between the assumption on low-frequency part of $n_1$ in Lemma \ref{lem:lwp97} ($n_1\in L_x^2$) and our main result in Theorem \ref{mainthm} ($n_1\in\dot H_x^{-1}$). In fact, the change of variable to the first order system in \cite{Tsutsumi1997} shows that the estimates for $n$ and $|\nabla|^{-1}\pd_t n$ are identical. Therefore, Lemma \ref{lem:lwp97} holds if we change the assumption $n_1,n_t\in H_x^{l-1}$ to $|\nabla|^{-1}n_1,|\nabla|^{-1}n_t\in H_x^l$. In the following, we will use Lemma \ref{lem:lwp97} in the case when $|\nabla|^{-1}n_1,|\nabla|^{-1}n_t\in H_x^l$. 
		\end{remark}
		
		By rescaling, we are able to obtain the local well-posedness for \eqref{equ-Z} on some local existence time depending on $\al$:
		\begin{cor}
			\label{lem:lwp1}
			Let $d=3 $ and $l\geq0$. Then, for any initial data $\vec u_0\in \HH^{l+1}$, there exists $T_0^{(\al)}>0$ such that the Zakharov system \eqref{equ-Z} admits a unique solution such that $\vec u \in C([0,T_0^{(\al)}];\HH^{l+1})$ and
			\begin{align}
				\label{sca-lwp}
				\quad\|\vec u\|_{L_t^\I([0,T_0^{(\al)}];\HH^{l+1})} \leq C(\al)\|\vec u_0\|_{ \HH^{l+1}},
			\end{align}
			where $T_0^{(\al)}$ and $C(\al)$ depend on $\al$ and $\|\vec u_0\|_{\HH^{l+1}}$.
		\end{cor}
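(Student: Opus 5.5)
The plan is to reduce Corollary \ref{lem:lwp1} to Lemma \ref{lem:lwp97} (in the form allowed by the subsequent remark, with $|\nabla|^{-1}n_1\in H^l$) through a parabolic-type rescaling that converts sound speed $\al$ into sound speed $1$. Given a solution $(u,n)$ of \eqref{equ-Z}, define
\begin{align*}
\tilde u(t,x)=\al^{-1}u(\al^{-2}t,\al^{-1}x),\qquad \tilde n(t,x)=\al^{-2}n(\al^{-2}t,\al^{-1}x).
\end{align*}
A direct computation shows that $2i\partial_t\tilde u-\Delta\tilde u=\al^{-3}(2i\partial_tu-\Delta u)=-\tilde n\tilde u$. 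For the wave part, $\partial_{tt}\tilde n=\al^{-6}\partial_{tt}n$ and $\Delta\tilde n=\al^{-4}\Delta n$, so $\partial_{tt}\tilde n-\Delta\tilde n=\al^{-4}(\al^{-2}\partial_{tt}n-\Delta n)=\al^{-4}\Delta|u|^2=\Delta|\tilde u|^2$. Thus $(\tilde u,\tilde n)$ solves \eqref{equ-z}, and the scaling is invertible, so solutions of the two systems are in one-to-one correspondence.

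Next I would check that the rescaled data lies in the right space. The data are $\tilde u_0=\al^{-1}u_0(\al^{-1}\cdot)$, $\tilde n_0=\al^{-2}n_0(\al^{-1}\cdot)$, and $\tilde n_1=\al^{-4}n_1(\al^{-1}\cdot)$, so that $|\nabla|^{-1}\tilde n_1=\al^{-3}(|\nabla|^{-1}n_1)(\al^{-1}\cdot)=\al^{-2}(|\al\nabla|^{-1}n_1)(\al^{-1}\cdot)$. Since $\|f(\al^{-1}\cdot)\|_{\dot H^s}=\al^{3/2-s}\|f\|_{\dot H^s}$, every inhomogeneous Sobolev norm of a dilate is controlled above and below by the original norm up to powers of $\al$. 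Hence
\begin{align*}
(\tilde u_0,\tilde n_0,|\nabla|^{-1}\tilde n_1)\in H^{l+1}\times H^{l}\times H^{l}
\end{align*}
with norm at most $C(\al)\|\vec u_0\|_{\HH^{l+1}}$. We apply Lemma \ref{lem:lwp97} with $k=l+1$ and regularity $l$ for $n$; the conditions $0\le l\le k\le l+1$ and $2k=2l+2\ge l+1$ hold for every $l\ge0$.

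Lemma \ref{lem:lwp97} then gives $\tilde T_0>0$, depending on the rescaled data norm (hence on $\al$ and $\|\vec u_0\|_{\HH^{l+1}}$), together with a unique solution on $[0,\tilde T_0]$ obeying \eqref{Tsulwp}. Undoing the scaling produces a solution of \eqref{equ-Z} on $[0,T_0^{(\al)}]$ with $T_0^{(\al)}=\al^{-2}\tilde T_0$. Converting the bound back through the same dilation inequalities, with $|\al\nabla|^{-1}n_t(t,x)=\al^{2}(|\nabla|^{-1}\tilde n_t)(\al^2t,\al x)$, yields \eqref{sca-lwp} with a constant $C(\al)$ that is a power of $\al$. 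Continuity in time and uniqueness transfer directly under the scaling.

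There is no real obstacle in this argument. The only care needed is bookkeeping: tracking the powers of $\al$ in the time-derivative component (through $|\al\nabla|^{-1}$), and using the remark after Lemma \ref{lem:lwp97} so that the low frequencies of $n_1$ are measured in $\dot H^{-1}$ rather than $L^2$.
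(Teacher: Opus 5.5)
Your proposal is correct and is essentially the paper's proof: the paper uses the same rescaling $S_\alpha u=\alpha^{-1}u(\alpha^{-2}t,\alpha^{-1}x)$, $W_\alpha n=\alpha^{-2}n(\alpha^{-2}t,\alpha^{-1}x)$ to pass to \eqref{equ-z}, applies Lemma \ref{lem:lwp97} (in the $|\nabla|^{-1}n_1$ form allowed by the subsequent remark) to the rescaled data, and scales back with $T_0^{(\alpha)}=\alpha^{-2}T_1^{(\alpha)}$. Your tracking of the powers of $\alpha$, including in the $|\alpha\nabla|^{-1}n_t$ component, agrees with the paper, which records the resulting constant explicitly as $C(\alpha)=C_2\alpha^{l+1}$.
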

		\begin{proof}
	For any spacetime functions $f$ and $g$, we introduce the scaling transforms $S_\al$ and $W_\al$ as follows:
			\begin{align}
				\label{scaling-S,W}
				S_\al f\coloneqq \al^{-1}f(\al^{-2}t, \al^{-1}x),\quad
				W_\al g\coloneqq \al^{-2}g(\al^{-2}t, \al^{-1}x).
			\end{align}
			Moreover, if the functions $f$ and $g$ are independent of $t$, we adopt definition by
			\EQn{
				S_\al f\coloneqq \al^{-1}f( \al^{-1}x),\quad
				W_\al g\coloneqq \al^{-2}g( \al^{-1}x).
			}
If $(u,n,|\al\na|^{-1}\p_tn)$ solves the system \eqref{equ-Z} with initial data $(u_0,n_0,|\al\na|^{-1}n_1)$, then $$(S_\al u ,W_\al n ,|\na|^{-1}\p_tW_\al n)$$ solves the system \eqref{equ-z} with initial data $(S_\al u_0, W_\al n_0, \al^{-2}|\na|^{-1}W_\al n_1)$. 
Applying Lemma \ref{lem:lwp97} to \eqref{equ-z} with initial data $(S_\al u_0, W_\al n_0, \al^{-2}|\na|^{-1}W_\al n_1)$, we obtain a solution
\begin{align*}
(S_\al u ,W_\al n ,|\na|^{-1}\p_tW_\al n) \in  C\big([0,T_1^{(\al)}];\HH^{l+1}\big),
\end{align*}
where $$T_1^{(\al)}\coloneqq T_0\big(\big\|\big(S_\al u_0,W_\al n_0, \al^{-2}|\na|^{-1}W_\al n_1\big)\big\|_{\HH^{l+1}}\big),$$
and $T_0$ is given in Lemma \ref{lem:lwp97}. Moreover, the solution satisfies the following bound:
\begin{align}
\label{est:Sal(u,n)}
&\quad\big\|(S_\al u ,W_\al n ,|\na|^{-1}\p_tW_\al n)\big\|_{L_t^\I([0,T_1^{(\al)}];\HH^{l+1})} 
\notag\\
&\leq C\big\|\big(S_\al u_0,W_\al n_0, W_\al\big(|\al\na|^{-1}n_1\big) \big)\big\|_{\HH^{l+1}}
\notag\\
&\le C_1(\al),
\end{align}
where we denote that
\begin{align*}
C_1(\al)\coloneqq
& C\Big[ \alpha^{\frac{1}{2}}\|u_0\|_{L^2} + \alpha^{-\frac{1}{2}}\|n_0\|_{L^2} + \alpha^{-\frac{1}{2}}\||\al\na|^{-1}n_1\|_{L^2} \\
& \quad + \alpha^{-l-\frac{1}{2}}\big(\|u_0\|_{\dot{H}^{l+1}} + \|n_0\|_{\dot{H}^{l}}+\|\al^{-1}n_1\|_{\dot{H}^{l-1}} \big)\Big]. 
\end{align*}
It is difficult to track the explicit dependence on $\al$ in $T_1^{(\al)}$ from $C_1(\al)$. Next, let $T_0^{(\al)}\coloneqq\al^{-2}T_1^{(\al)}$. Applying $S_\al^{-1}$ and $W_\al^{-1}$, we obtain a local solution $$(u, n, |\al\na|^{-1}n_t)\in C([0,T_0^{(\al)}];\HH^{l+1})$$ such that
\begin{align*}
&\quad\|(u, n, |\al\na|^{-1}n_t)\|_{L_t^\I([0,T_0^{(\al)}];\HH^{l+1})} \\&\leq (\al^{-\frac{1}{2}}+\al^{l+\frac{1}{2}}+\al^{\frac{1}{2}})C_1(\al) \notag\\
&\leq C_2 \al^{l+1}\big\|(u_0,n_0,|\al\na|^{-1}n_1)\big\|_{\HH^{l+1}},
\end{align*}
for some constant $C_2>0$. Thus, \eqref{sca-lwp} follows with $C(\al)=C_2 \al^{l+1}$.	
\end{proof}
			
			
			%

			\subsection{High order energy identities}
			The derivative  $D^k$ is defined by
			\begin{align*}
				D^k \coloneqq 
				\begin{cases}
					\Delta^{p} & \text{for  } k=2p, \\
					\nabla \Delta^{p} & \text{for  } k=2p+1,
				\end{cases}
			\end{align*}
			where $p $ is a non-negative integer. We observe that $D^ku$ is a vector-valued function when 
			$k$ is odd, and a scalar-valued function when 
			$k$ is even. To handle products between derivatives of high orders in a uniform way, we define an operation $\diamond$ as follows:
			\begin{align*}
				D^{k}f \diamond  D^{k}g&\coloneqq
				\begin{cases}
					D^{k}f\cdot D^{k}g &  \text{ for } k \text{ is odd}, \\
					D^{k}f \, D^{k}g &  \text{ for } k \text{ is even},
				\end{cases}
				\\
				D^{a}f \diamond  D^{k-a}g \diamond  D^{k}h &\coloneqq
				\begin{cases}
					D^{k-a}g \,  D^{a}f\cdot D^{k}h  &  \text{ for } k,a \text{ are odd}, \\
					D^{a}f  \, D^{k-a}g \cdot D^{k}h&  \text{ for } k \text{ is odd, and } a \text{ is even},\\
					D^{k-a}g \, 	D^{a}f  \cdot D^{k}h&  \text{ for } k \text{ is even, and } a \text{ is odd},\\
					D^{a}f \,  D^{k-a}g  \, D^{k}h &  \text{ for } k,a \text{ are even}.
				\end{cases}
			\end{align*}
			Now for each integer $k\geq1,$  we define the following energy functionals:
			\begin{align}
				\mathcal{M}_k(t) & \coloneqq \| D^k u(t) \|_{L^2_x}^2, 
				\label{def-m_M}
				\\
				\mathcal{E}^{L}_{k}(t) & \coloneqq \| D^{k+1} u(t) \|_{L^2_x}^2 + \frac{1}{2} \| D^k n(t) \|_{L^2_x}^2 + \frac{1}{2} \| \alpha^{-1} D^{k-1} \partial_t n(t) \|_{L^2_x}^2, 
				\label{def-LmEm} 
				\\
				\mathcal{E}^{W}_{k}(t) &\coloneqq 
				\int_{\mathbb{R}^3} D^k \bigl( |u|^2 \bigr) \diamond D^k n(t,x) \dx,
				\label{def-WmEm}
				\\
				\label{def-m_Em}
				\mathcal{E}_k (t)&\coloneqq	\mathcal{E}^L_k (t)+	\mathcal{E}^{W}_{k}(t).
			\end{align}
			For \(k=0\), these functionals coincide with the conserved mass and energy \eqref{mass}--\eqref{energy}:
			\begin{align}
			\label{def:mE_0}
				\mathcal{M}_0(t)\coloneqq M(t)=M(0),\quad \mathcal{E}_0 (t)\coloneqq E(t)=\mathcal{E}^L_0 (t)+\mathcal{E}^W_0 (t)=\mathcal{E}_0 (0),
			\end{align}
			where
			\begin{align}
			\label{def:mE_L0}
				\mathcal{E}^L_0 (t)&\coloneqq\|\na u(t) \|_{L^2_x}^2+ \frac{1}{2} \|  n(t) \|_{L^2_x}^2+ \frac{1}{2} \|  |\al\na|^{-1} \partial_t n(t) \|_{L^2_x}^2,
			\\
			\label{def:mE_W0}
				\mathcal{E}^W_0 (t)&\coloneqq  \int_{\mathbb{R}^3}  n|u|^2(t,x)   \dx. 
			\end{align}
			We also denote that for integer $k\geq0,$
\begin{align}
\label{def-LmE}
	\mathcal{U}^L_k(t)&\coloneqq \mathcal{E}^L_0 (t)+\mathcal{E}^L_k (t),
			\end{align}
then we have
			\begin{align}
			\label{cal-LmE}
				\mathcal{U}^L_k(t)=\mathcal{E}_0 (t)+\mathcal{E}_k(t)-\mathcal{E}^W_0 (t)-\mathcal{E}^W_k (t).
			\end{align}
The following lemma gives the conservation identities for the modified mass 
$\mathcal{M}_k(t)$ and energy $\mathcal{E}_k(t)$.
			\begin{lem}
				\label{lem:mE}
				Let $k\geq2$ be an integer and let $\vec u \in C([0,T]; \HH^{k+1})$ be a solution to the system \eqref{equ-Z} on time interval $[0,T]$. The modified mass $\mathcal{M}_{k}(t)$ and energy $\mathcal{E}_k(t)$ defined in \eqref{def-m_M} and \eqref{def-m_Em} satisfy the following identities:
				\begin{align}
					\label{m_M(T)}
					\mathcal{M}_{k}(t) &= \mathcal{M}_{k}(0) -\im \int_{0}^{t} \int_{\mathbb{R}^3} D^{k}(n u) \diamond D^{k}\bar{u}(s,x) \dx  \ds, \\
					\label{m_E(T)}
					\mathcal{E}_{k}(t) &= \mathcal{E}_{k}(0) + \mathcal{E}^{I}_{k}(t),
				\end{align}
				where the interaction term $\mathcal{E}^{I}_{k}(t)$ admits the decomposition
		\begin{subequations}		
		\begin{align}
\label{main-EI}
	\mathcal{E}^{I}_{k}(t) &\coloneqq  2\re \int_{0}^{t} \int_{\mathbb{R}^3} \left[\bar{u}_s D^{k}u  \diamond D^{k}n - n D^{k} u  \diamond D^{k}\bar{u}_s \right](s,x) \dx \ds \\
	&\quad + 2 \re\int_{0}^{t} \int_{\mathbb{R}^3} \big[\mathcal{R}(u,\bar{u}_s)\diamond D^kn -\mathcal{R}(u,n)\diamond D^{k}\bar{u}_s\big] (s,x)\dx \ds \label{eq:EI-high}
\end{align}
\end{subequations}
with $\mathcal{R}(f,g)\coloneqq D^{k}(fg)-f D^{k}g-gD^{k}f $. 
\end{lem}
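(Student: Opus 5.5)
We prove both identities by differentiating in time and integrating by parts in space. We first argue for smooth, rapidly decaying solutions, where every manipulation below is legitimate, and then pass to $\vec u\in C([0,T];\HH^{k+1})$ by density together with continuous dependence. For the limiting step, Corollary~\ref{lem:lwp1} and Lemma~\ref{lem:lwp97} applied at fixed $\al$ suffice, since no uniformity in $\al$ is needed here.

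\textbf{The mass identity \eqref{m_M(T)}.} Write the Schr\"odinger equation as $\partial_t u=\frac{1}{2i}(\De u-nu)$. Then
\[
\frac{d}{dt}\mathcal M_k=2\re\int D^k u_t\diamond D^k\bar u\dx .
\]
The contribution of $\De u$ is $\re\,(-i)\int D^k\De u\diamond D^k\bar u\dx$. Integrating by parts once turns $\int D^k\De u\diamond D^k\bar u\dx$ into $-\|\nabla D^k u\|_{L^2}^2$ (up to the $\diamond$ convention), which is real, so this contribution vanishes. The remaining term is $\re\big(i\int D^k(nu)\diamond D^k\bar u\dx\big)=-\im\int D^k(nu)\diamond D^k\bar u\dx$. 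Integrating in time gives \eqref{m_M(T)}.

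\textbf{The energy identity \eqref{m_E(T)}: linear part.} We differentiate $\mathcal E^L_k$ term by term.

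\emph{Wave terms.} Using $\al^{-2}n_{tt}=\De n+\De|u|^2$, the two wave terms give
\[
\int D^kn\diamond D^kn_t\dx+\int \al^{-2}D^{k-1}n_t\diamond D^{k-1}n_{tt}\dx .
\]
After integrating by parts, $\al^{-2}D^{k-1}n_{tt}$ becomes $-D^k n-D^k|u|^2$ against $D^k n_t$. Hence the wave terms contribute $-\int D^k|u|^2\diamond D^k n_t\dx$. Here we use the identity $\int D^{k-1}f\diamond D^{k-1}\De g\dx=-\int D^kf\diamond D^kg\dx$, which we verify separately for $k$ even and $k$ odd.

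\emph{Schr\"odinger term.} Similarly, $\frac{d}{dt}\|D^{k+1}u\|^2=-2\re\int D^k u_t\diamond D^k\De\bar u\dx$. We substitute $\De\bar u=-2i\bar u_t+n\bar u$, using the conjugated equation $-2i\bar u_t-\De\bar u=-n\bar u$, so that $\De\bar u=-2i\bar u_t+n\bar u$. The term $-2\re\int D^ku_t\diamond(-2i)D^k\bar u_t\dx=-4\re\big(i\|D^ku_t\|^2\big)$ vanishes. What remains is $-2\re\int D^ku_t\diamond D^k(n\bar u)\dx$.

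\textbf{The energy identity: adding $\mathcal E^W_k$.} We have
\[
\frac{d}{dt}\mathcal E^W_k=\int D^k(|u|^2)_t\diamond D^kn\dx+\int D^k|u|^2\diamond D^kn_t\dx .
\]
The second term cancels the wave contribution above. Writing $(|u|^2)_t=2\re(\bar u_su)$, we obtain
\[
\frac{d}{dt}\mathcal E_k=2\re\int D^k(u\bar u_s)\diamond D^kn\dx-2\re\int D^k(nu)\diamond D^k\bar u_s\dx ,
\]
where in the last term we replaced $D^ku_t\diamond D^k(n\bar u)$ by its conjugate, which does not change the real part.

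\textbf{Leibniz expansion and the main obstacle.} Expand each product by Leibniz with the commutator $\mathcal R$:
\begin{itemize}
\item $D^k(u\bar u_s)=\bar u_sD^ku+uD^k\bar u_s+\mathcal R(u,\bar u_s)$;
\item $D^k(nu)=nD^ku+uD^kn+\mathcal R(u,n)$.
\end{itemize}
The two terms $2\re\int uD^k\bar u_s\diamond D^kn\dx$ and $-2\re\int uD^kn\diamond D^k\bar u_s\dx$ cancel exactly. This cancellation of the top-order terms, which would otherwise cost derivatives, is the point of including $\mathcal E^W_k$. What remains is precisely \eqref{main-EI}+\eqref{eq:EI-high}, and integrating in time gives \eqref{m_E(T)}.

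The main obstacle is bookkeeping rather than analysis. For odd $k$, the quantities $D^k$ are vectors, so we must check that the $\diamond$ pairing is consistent in each Leibniz and integration-by-parts step; we do this by treating $k$ even and $k$ odd separately. We must also justify the time derivatives at regularity $\HH^{k+1}$. There, $u_t\in H^{k-1}$ and $D^k\bar u_s$ is only in $H^{-1}$, so the pairings are interpreted in the $H^1$–$H^{-1}$ duality. This is one more reason to derive the identities for smooth solutions first and then pass to the limit.
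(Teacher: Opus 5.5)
Your proposal is correct and follows essentially the same route as the paper. The paper pairs the $D^k$-differentiated Schr\"odinger equation with $D^k\bar u$ and $D^k\bar u_t$, pairs the $D^{k-2}$-differentiated wave equation with $D^k n_t$, integrates by parts in time to produce $\frac{d}{dt}\mathcal{E}^W_k$, and then uses the same Leibniz/commutator expansion and the same cancellation of the $u\,D^k\bar u_s\diamond D^k n$ terms. Your only additions are the density argument, which the paper omits; note that the continuous dependence it requires comes from the Ginibre--Tsutsumi--Velo theory, not from Lemma~\ref{lem:lwp97} as quoted. There is also a harmless sign slip in the vanishing term, which should read $+4\re\big(i\|D^ku_t\|_{L^2}^2\big)$.
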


			\begin{proof}
				Applying the operator $D^{k},D^{k-2} (k\geq2)$ to the first and second equation of \eqref{equ-Z} respectively, we obtain 
				\begin{align}
					\label{equ-DmZs}
					&2i\partial_{t}D^k u-D^{k+2} u=-D^{k}(nu), 
					\\
					\label{equ-DmZw}
					&\al^{-2}\partial_{tt}D^{k-2}n-D^{k}  n= D^{k}(|u|^2).
				\end{align}
				Taking inner product by \eqref{equ-DmZs} with $D^{k}\bar{u}, $ and considering the imaginary part, we have
				\begin{align}
					\label{m_m1}
					\frac{d}{dt}\big\|D^{k}u\big\|^2_{L^2_x}=-\im \int_{\R^3} D^{k}(nu)\diamond D^{k}\bar{u} \dx.
				\end{align}
				Taking inner product by \eqref{equ-DmZs} with $D^{k}\bar{u}_t, $ and considering the real part, we have
				\begin{align}
					\label{m_e1}
					\frac{1}{2}\frac{d}{dt}\big\|D^{k+1}u\big\|^2_{L^2_x}=-\re \int_{\R^3} D^{k}(nu)\diamond  D^{k}\bar{u}_t \dx.
				\end{align} 
				Taking inner product by \eqref{equ-DmZw} with $D^{k} n_t, $ we have
				\begin{align}
					\label{m_e2}
					-\frac{1}{2}\frac{d}{dt}\big\|\al^{-1}D^{k-1}n_t\big\|^2_{L^2_x}-\frac{1}{2}\frac{d}{dt}\big\|D^{k}n\big\|^2_{L^2_x}= \int_{\R^3} D^{k}(|u|^2)\diamond D^{k}n_t\dx.
				\end{align}
				By integration-by-parts with respect to $t$, we have
				\begin{equation}
					\label{m_e2R}
					\begin{aligned}	
						\text{(RHS) of }\eqref{m_e2}&= \frac{d}{dt}\int_{\R^3}D^{k}(|u|^2)\diamond D^{k}n \dx
						-2\re \int_{\R^3}D^{k}(u\bar{u}_t)\diamond D^{k}n \dx
						\\
						&= \frac{d}{dt}\int_{\R^3}D^{k}(|u|^2)\diamond D^{k}n \dx-2 \re \int_{\R^3}u D^{k}\bar{u}_t\diamond D^{k}n \dx
						\\
						&\quad -2\re\int_{\R^3}\bar{u}_tD^{k}u\diamond D^{k}n \dx
						-2\re\int_{\R^3}\mathcal{R}(u,\bar{u}_t)\diamond D^kn \dx,
					\end{aligned}
				\end{equation}
where the remainder $\mathcal{R}(u,\bar{u}_t) $ is denoted by $D^{k}(u\bar{u}_t)-u D^{k}\bar{u}_t-\bar{u}_tD^{k}u$ satisfying the estimate
$$\|\mathcal{R}(u,\bar{u}_t)\diamond D^kn\|_{L^1_x} \les \sum_{a=1}^{k-1}\|D^au\|_{L_x^{p_1}}\|D^{k-a}\bar{u}_t\|_{L_x^{p_2}}\|D^{k}n\|_{L_x^{p_3}}, $$ for any $p_1,p_2,p_3\geq 1$ satisfying $1=\frac{1}{p_1}+\frac{1}{p_2}+\frac{1}{p_3}$. Direct calculation also gives that
	\begin{equation} \label{m_e1R}
		\begin{aligned}
		\text{(RHS) of }\eqref{m_e1}
		&=-\re \int_{\R^3} uD^{k}n\diamond D^{k}\bar{u}_t \dx-\re \int_{\R^3} nD^{k} u\diamond D^{k}\bar{u}_t\dx
						\\
		&\quad- \re\int_{\R^3} \mathcal{R}(u,n)\diamond D^k\bar{u}_t \dx,
					\end{aligned}
				\end{equation}	
	where the remainder $\mathcal{R}(u,n)$ is denoted by $D^{k}(un)-u D^{k}n-nD^{k}u$ satisfying the estimate
				$$\|\mathcal{R}(u,n)\diamond D^k\bar{u}_t\|_{L^1_x} \les \sum_{a=1}^{k-1}\|D^an\|_{L_x^{q_1}}\|D^{k-a}u\|_{L_x^{q_2}}\|D^{k}\bar{u}_t\|_{L_x^{q_3}}, $$ for any $q_1,q_2,q_3\geq 1$ satisfying $1=\frac{1}{q_1}+\frac{1}{q_2}+\frac{1}{q_3}$. 
				In the later estimates, only the main terms need to be focused on.
				Inserting \eqref{m_e2R}, \eqref{m_e1R} into \eqref{m_e1} and \eqref{m_e2} respectively, calculating $2\times \eqref{m_e1}-\eqref{m_e2},$ 
				then we obtain the desired result.
			\end{proof}
			
			\subsection{Local-in-time a priori estimate}
			Now, we prove $\mathit{a\ priori}$ estimate for 3D Zakharov system with the initial data $\vec u_0\in \HH^{k+1}$ on a time interval that is independent of $\al$.
			\begin{prop}[A priori estimate]
				\label{prop:energy estimate}
				Let $k \geq 2$ be an integer, and $\vec u_0\in \HH^{k+1}$. Then, there exists $T = T\big( \| \vec u_0 \|_{\HH^3} \big)$ such that: If $\vec u \in C([0,T]; \HH^{k+1})$ denotes a solution to the system \eqref{equ-Z}, then it holds that
				\begin{align}
					\label{lwp}
					\big\| \vec u \big\|_{L^{\infty}_t([0,T]; \HH^{k+1})} 
					\leq C \big\| \vec u_0 \big\|_{\HH^{k+1}},
				\end{align}
				where the constant $C$ is independent of $\alpha$, and depends only on $\|u_0\|_{H^{k-1}}$.
			\end{prop}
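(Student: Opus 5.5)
Our plan is a bootstrap built on the identities of Lemma \ref{lem:mE} together with the conserved mass and energy \eqref{def:mE_0}. We work with the quantity
\[
A_k(t)\coloneqq \mathcal{U}^L_k(t)+\mathcal{M}_0(t)+\mathcal{M}_k(t),
\]
which is equivalent, uniformly in $\al$, to $\|\vec u(t)\|_{\HH^{k+1}}^2$. Indeed, $\|D^{k+1}u\|_{L^2}^2+\|D^kn\|_{L^2}^2+\|\al^{-1}D^{k-1}n_t\|_{L^2}^2$ controls the top order. The $H^1\times L^2\times L^2$ part comes from $\mathcal{E}^L_0$, and the intermediate norms follow by interpolation.

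By \eqref{cal-LmE}, $\mathcal{U}^L_k(t)=\mathcal{E}_0(0)+\mathcal{E}_k(0)+\mathcal{E}^I_k(t)-\mathcal{E}^W_0(t)-\mathcal{E}^W_k(t)$. We control the non-sign-definite terms as follows.
\begin{itemize}
\item For $\mathcal{E}^W_0$: $|\mathcal{E}^W_0|\le \|n\|_{L^2}\|u\|_{L^4}^2\le\frac14\|n\|_{L^2}^2+C\|u\|_{L^2}\|\nabla u\|_{L^2}^3$. The last term is absorbed by a Gagliardo–Nirenberg/Young argument, combined with mass conservation and the standard short-time bound for $\|\nabla u\|_{L^2}$.
\item For $\mathcal{E}^W_k$: by Leibniz, $|\mathcal{E}^W_k|\le\|D^k(|u|^2)\|_{L^2}\|D^kn\|_{L^2}$, and $\|D^k(|u|^2)\|_{L^2}\lesssim\|u\|_{L^\infty}\|D^ku\|_{L^2}+\dots\lesssim \|u\|_{H^2}\|u\|_{H^k}$.
\end{itemize}
Thus $\mathcal{E}^W_k$ is bounded by $\frac14\|D^kn\|^2+C\|u\|_{H^2}^2\|u\|_{H^k}^2$. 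This is exactly why the constant depends on lower Schrödinger norms. The quantity $\|u\|_{H^k}^2$ is not top order; we control it by $\mathcal{M}_0+\mathcal{M}_k$, whose growth is governed by \eqref{m_M(T)}. This is the ``mass structure with high-order derivatives'' that the introduction mentions.

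Next we estimate the time integrals. From \eqref{m_M(T)}, the main part $\im\int nD^ku\diamond D^k\bar u$ vanishes since $n$ is real. What remains is bounded by $\int_0^t\|n\|_{H^k}\|u\|_{H^{k}}\|u\|_{W^{?,\infty}}$-type products, hence by $t\,C(A_k)$, with only $A_k^{1/2}$ at top order in $n$.

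For $\mathcal{E}^I_k$ we substitute $\bar u_s$ from the Schrödinger equation, $2i\bar u_s=\Delta\bar u-n\bar u$ up to sign. In \eqref{main-EI}, the dangerous term is $n\,D^ku\diamond D^k\bar u_s$, which contains $D^{k+2}\bar u$. We integrate by parts once in space to get $\re\int n\,|D^{k+1}u|^2$-type expressions, which cancel in the real part, plus $\nabla n\, D^ku\, D^{k+1}\bar u$, which is bounded by $\|n\|_{W^{1,3}}\|u\|_{H^{k+1}}^2$. The first term $\bar u_sD^ku\diamond D^kn$ costs $\|u_s\|_{L^\infty}\lesssim\|u\|_{H^{4-}}+\dots$, or is split via Hölder with $\|u_s\|_{L^3}$, where $\|u_s\|_{L^3}\lesssim\|u\|_{H^{5/2}}+\|nu\|_{L^3}$. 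The remainders $\mathcal{R}$ in \eqref{eq:EI-high} are distributed with $D^{k-a}\bar u_s$, $1\le a\le k-1$, so at most $k+1$ derivatives fall on $u$. All of these are then bounded by $C(\|\vec u\|_{\HH^3})A_k$. Crucially, none involves $\al^{-1}n_t$ except through the energy itself, so every bound is $\al$-independent.

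The main obstacle is closing the estimate with an existence time depending only on $\|\vec u_0\|_{\HH^3}$. We first run the argument with $k=2$: we obtain $A_2(t)\le C A_2(0)+C\int_0^t P(A_2)\,ds$, which for $T=T(\|\vec u_0\|_{\HH^3})$ gives $\sup_{[0,T]}A_2\le 2CA_2(0)$ by continuity. This requires $A_2$ to be continuous, which Corollary \ref{lem:lwp1} provides locally, together with a continuation argument. For general $k$ the inequality becomes linear in $A_k$ with coefficients controlled by the $\HH^3$ bound, so Gronwall gives \eqref{lwp} on the same $T$. The delicate point is the term $\mathcal{E}^W_k$: its coefficient $\|u\|_{H^{k}}$ must come from $\mathcal{M}_k$ rather than from $\mathcal{U}^L_k$. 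Tracking it this way yields the stated dependence of $C$ on $\|u_0\|_{H^{k-1}}$, obtained by interpolating the lower-order terms.
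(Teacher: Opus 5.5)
Your proposal is correct and is essentially the paper's argument. It uses the same identities from Lemma \ref{lem:mE}, controls the non-sign-definite term $\mathcal{E}^W_k$ through the high-order mass $\mathcal{M}_k$ and \eqref{m_M(T)}, substitutes $\bar u_s$ and integrates by parts once in space in $\mathcal{E}^I_k$, bootstraps at $k=2$, and then extends to general $k$. The differences are cosmetic: you exploit the cancellations coming from $n$ being real where the paper simply bounds those terms by H\"older, and you close the general-$k$ step by Gronwall rather than by absorbing a $T$-small multiple of $\sup_t\mathcal{U}^L_k$.

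One small correction. The remainder $\mathcal{R}(u,n)\diamond D^k\bar u_s$ in \eqref{eq:EI-high} puts $k+2$ derivatives on $u$. Unlike $\mathcal{R}(u,\bar u_s)\diamond D^kn$, it therefore needs one spatial integration by parts before your ``at most $k+1$ derivatives'' count applies; the paper leaves this step implicit as well.
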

			
			\begin{remark}
				The condition for integer $k\geq2$ comes from the use of Sobolev's embedding $H^k(\R^3)\hookrightarrow L^{\infty}(\R^3)$.
			\end{remark}

			\begin{proof}
 Recall the expression of $\mathcal{E}^L_k(t)$ and $\mathcal{E}^L_0(t)$ in \eqref{def-LmEm} and \eqref{def:mE_L0}, from \eqref{def-LmE}, we only need to estimate $\sup_{t\in[0,T]}\mathcal{U}^L_k(t)$. We denote $$\mathcal{A}_k^L \coloneqq\sup_{t\in[0,T]}\mathcal{U}^L_k(t).$$ 
 Combining \eqref{def:mE_0}, \eqref{cal-LmE}, and \eqref{m_E(T)} in Lemma \ref{lem:mE}, we have
	\begin{align}
\mathcal{U}^{L}_k(t)&=\mathcal{E}_0 (t)+\mathcal{E}_k(t)-\mathcal{E}^W_0 (t)-\mathcal{E}^W_k (t)
\notag\\
&=\mathcal{E}_0(0)+\mathcal{E}_k(0)+\mathcal{E}^{I}_k(t)-\mathcal{E}_0^{W}(t)-\mathcal{E}_k^{W}(t),
				\end{align}
then, we have
\begin{align}
\label{estEL1}
\mathcal{A}_k^L
&\le
|\mathcal{E}_0(0)+\mathcal{E}_k(0)|
+\sup_{0\le t\le T}|\mathcal{E}^{I}_k(t)|
+\sup_{0\le t\le T}\big(|\mathcal{E}^{W}_0(t)|+|\mathcal{E}^{W}_k(t)|\big).
\end{align}
We will estimate $\mathcal{A}_k^L$ via the following steps.

\textit{Step 1. Estimate for $\sup_{0\le t\le T}\big(|\mathcal{E}^{W}_0(t)|+|\mathcal{E}^{W}_k(t)|\big)
$.}
From the expression of $\mathcal{M}_k(t)$ on \eqref{m_M(T)}, using H\"{o}lder's inequality, we have
\begin{align}
\label{Mest1}
\mathcal{M}_{k}(t)
&\leq \mathcal{M}_{k}(0)+C\int_{0}^t \big(\|D^k n\|_{L_x^2}\|D^k u\|_{L_x^2}\|u\|_{L_x^{\infty}}+\|n\|_{L_x^{3}}\|D^k u\|_{L_x^6}\|D^k u\|_{L_x^2}\big)(s) \ds
\notag\\
&\leq \mathcal{M}_{k}(0)+C\int_{0}^t \big[\mathcal{U}_{k}^{L}(s)\big]^{\frac{1}{2}}\big[\|D^k u\|_{L_x^2}\big(\|u\|_{L^\infty_{x}}+\|n\|_{L_x^{3}}\big)\big](s)\ds.
\end{align}
	By using  Gagliardo-Nirenberg's inequality, we have
\begin{align}
\label{Dm1}
\|D^k u(s)\|_{L_x^2}&\lesssim	\|D^{k+1} u(s)\|^{\frac{k-2}{k-1}}_{L_x^2}\|D^2 u(s)\|^{\frac{1}{k-1}}_{L_x^2}\lesssim\big[\mathcal{U}_{k}^{L}(s)\big]^{\frac{1}{2}},
\\
\label{inf1}
	\| u(s)\|_{L_x^{\infty}}&\lesssim	\|D^3 u(s)\|^{\frac{1}{4}}_{L_x^2}\|D u(s)\|^{\frac{3}{4}}_{L_x^2}\lesssim\big[\mathcal{U}_{2}^{L}(s)\big]^{\frac{1}{2}},
					\\
	\label{1/2:k-0}
	\|n(s)\|_{L_x^{3}}&\lesssim	\|D n(s)\|^{\frac{1}{2}}_{L_x^2}\| n(s)\|^{\frac{1}{2}}_{L_x^2}\lesssim\big[\mathcal{U}_{2}^{L}(s)\big]^{\frac{1}{2}}.
				\end{align}
				Inserting \eqref{Dm1}, \eqref{inf1}, and \eqref{1/2:k-0} into \eqref{Mest1}, we have
				\begin{align}
					\label{Mest}
					\sup_{t\in[0,T]}\mathcal{M}_{k}(t)&\leq \mathcal{M}_{k}(0)+CT\mathcal{A}_{k}^{L}\big(\mathcal{A}_{2}^{L}\big)^{\frac{1}{2}}.
				\end{align}
				Recall the expression of $\mathcal{E}^W_k(t)$ in \eqref{def-WmEm}, by Gagliardo-Nirenberg's inequality, H\"{o}lder's inequality, and \eqref{mass}, we have
\begin{align}
		\label{est-EWm1}
|\mathcal{E}_k^W(t)|&=\left|\int_{\R^3} D^k|u|^2 \diamond D^k n\dx\right|\notag\\
	&\leq C \|D^k u\|_{L_x^2}\|D^k n\|_{L_x^2}\|u\|_{L_x^\infty}\notag\\
	&\leq C \|D^k u\|_{L_x^2}\|D^k n\|_{L_x^2}\|D^3u\|^{\frac{1}{2}}_{L_x^2}\|u\|^{\frac{1}{2}}_{L_x^2}\notag\\
	&	\leq C(\|u_0\|_{L^2}) \big[\mathcal{M}_k(t)\big]^{\frac{1}{2}} \big[\mathcal{U}_k^{L}(t)\big]^{\frac{1}{2}}\big[\mathcal{U}_2^{L}(t)\big]^{\frac{1}{4}}.
\end{align}
Inserting \eqref{Mest} into \eqref{est-EWm1}, we have 
	\begin{align}
		\label{est-EWm}
	\sup_{t\in[0,T]}|\mathcal{E}_k^W(t)|&\leq  C \big[\mathcal{M}_k(0)\big]^{\frac{1}{2}}\big(\mathcal{A}^{L}_k\big)^{\frac{3}{4}}
	+CT^{\frac{1}{2}}\mathcal{A}^{L}_k\big(\mathcal{A}_2^{L}\big)^{\frac{1}{2}}.
\end{align}
Similarly, we have
\begin{align}
|\mathcal{E}_0^W(t)|=\left|\int_{\R^3} n|u|^2 \dx\right|
&\leq C \|n\|_{L_x^2}\| u\|_{L_x^2}\|u\|_{L_x^\infty}\notag\\
&\leq C \|n\|_{L_x^2}\| u\|_{L_x^2}\|D^{k+1}u\|_{L_x^2}^{\frac{3}{2(k+1)}}\|u\|_{L_x^2}^{1-\frac{3}{2(k+1)}}\notag\\
&	\leq C  \big[\mathcal{E}^{L}_k(t)\big]^{\frac{3}{4(k+1)}}\big[\mathcal{E}^{L}_0(t)\big]^{\frac{1}{2}}\| u_0\|^{2-\frac{3}{2(k+1)}}_{L^2},
\end{align}
which implies
\begin{align} \label{estEW0}
\sup_{t\in[0,T]}|\mathcal{E}_0^W(t)|
	&	\leq C(\| u_0\|_{L^2}) \big(\mathcal{A}^{L}_k\big)^{\frac{2k+5}{4(k+1)}}\| u_0\|^{2\frac{2k-1}{4(k+1)}}_{L^2}.
				\end{align}
Combining \eqref{est-EWm} and \eqref{estEW0}, we obtain 
the estimate of $\sup_{0\le t\le T}\big(|\mathcal{E}^{W}_0(t)|+|\mathcal{E}^{W}_k(t)|\big)$.

\textit{Step 2. Estimate for $|\mathcal{E}_0(0)+\mathcal{E}_k(0) |$.}
				Recall the expression of $\mathcal{E}_0(0)$ and $\mathcal{E}_k(0)$ from \eqref{def-LmEm}--\eqref{def:mE_W0}, we have
	\begin{equation}
			\begin{aligned}
			|\mathcal{E}_0(0)+\mathcal{E}_k(0) |\leq& \|(u_0, n_0, |\al \na|^{-1}n_1)\|^2_{H^{k+1}\times H^{k}\times H^{k}}
		\\	&+	\left|\int_{\R^3}n_0|u_0|^2\dx\right|+	\left|\int_{\R^3}D^{k}|u_0|^2\diamond D^kn_0\dx\right|.
	\end{aligned}
				\end{equation}
				Using H\"older's inequality, Sobolev's embedding $\dot{H}^{1}\hookrightarrow L^6,$ and $\dot{H}^{\frac{1}{2}}\hookrightarrow L^3$ in $x$, we have
				\begin{align}
					\label{estEW00}
				\left|\int_{\R^3}n_0|u_0|^2\dx\right|
						&\leq C\|u_0\|_{L^6}\|u_0\|_{L^3}\|n_0\|_{L^2}
					\leq C(\|u_0\|_{H^1})\|(u_0, n_0)\|^2_{\dot{H}^{1}\times L^{2}},\\
					\label{estEW0m}
					\left|\int_{\R^3}D^{k}|u_0|^2\diamond D^k n_0\dx\right|	
					&\leq C \|u_0\|_{L^3}\|D^k n_0\|_{L^2}\|D^k u_0\|_{L^6}
					\leq C(\|u_0\|_{H^1})\|(u_0, n_0)\|^2_{\dot{H}^{k+1} \times \dot{H}^{k}}.
				\end{align}
				Then,
				\begin{align}
					\label{estE0}
	|\mathcal{E}_0(0)+\mathcal{E}_k(0) |\leq C(\|u_0\|_{H^1}) \|(u_0, n_0, |\al\nabla|^{-1}n_1)\|^2_{H^{k+1}\times H^{k}\times H^{k}}.
				\end{align}
				\textit{Step 3. Estimate for $	\sup_{t\in[0,T]}|\mathcal{E}^{I}_k(t)|. $}
				Recall the expression of $\mathcal{E}^{I}_k(t) $ in \eqref{main-EI}--\eqref{eq:EI-high}, we only need to estimate the main term \eqref{main-EI}, while the lower order terms in \eqref{eq:EI-high} can be handled by interpolation. Then we have
				\begin{align}
					\eqref{main-EI}&=2\re \int_{0}^{t}\int_{\R^3} n D^{k} u\diamond D^{k}\bar{u}_s (s,x)\dx\ds +2\re \int_{0}^{t}\int_{\R^3}\bar{u}_s D^{k}u \diamond D^{k}n(s,x) \dx\ds\notag\\
					&\coloneqq I_1(t)+I_2(t).
				\end{align}
				Using the first equation of \eqref{equ-Z} and integration-by-parts, we have
				\begin{align}
					\left|I_1(t)\right|
					&\les  \int_{0}^{t}\int_{\R^3} \left|nD^{k} u\diamond \big[D^{k+2}\bar{u}+D^{k}(n\bar{u})\big] \right|\dx\ds \notag\\
					\label{esEI1}
					&\les \int_{0}^{t}\int_{\R^3} \left|n D^{k+1} u\diamond D^{k+1}\bar{u}\right|\dx\ds  \\
					\label{esEI2}	
					&\quad +\int_{0}^{t}\int_{\R^3}\left| \na n \diamond D^{k} u\diamond D^{k+1}\bar{u}\right|\dx\ds\\
					\label{esEI3}
					&\quad +\int_{0}^{t}\int_{\R^3} \left|nD^{k} u\diamond D^{k}(n\bar{u}) \right|\dx\ds . 
				\end{align}  
				Using H\"{o}lder's inequality and Gagliardo-Nirenberg's inequality, we have
	\begin{align*}
	\eqref{esEI1}&\les \int_0^t \big(\|D^{k+1} u\|^2_{L^2_x}\|n\|_{L_x^{\infty}}\big) (s)\ds
	\les \int_0^t \big(\|D^{k+1} u\|^2_{L^2_x}\|D^2 n\|^{\frac{3}{4}}_{L^2_x}\| n\|^{\frac{1}{4}}_{L^2_x}\big) (s)\ds\\
	&\les \int_0^t  \mathcal{U}^L_k(s)\big[\mathcal{U}^L_2(s)\big]^{\frac{1}{2}}\ds,\\
	\eqref{esEI2}&\les \int_0^t \big( \|D^{k+1} u\|_{L^2_x}\|D^k u\|_{L^6_x}\| Dn\|_{L^3_x} \big) (s)\ds
	\les \int_0^t \big( \|D^{k+1} u\|^2_{L^2_x}\|D^2 n\|^{\frac{3}{4}}_{L^2_x}\| n\|^{\frac{1}{4}}_{L^2_x}\big)(s)\ds\\
	&\les\int_0^t \mathcal{U}^L_k(s)\big[\mathcal{U}^L_2(s)\big]^{\frac{1}{2}}\ds,
			\\
	\eqref{esEI3}&\les \int_0^t \big( \|D^k u\|_{L^6_x}\|n\|_{L^3_x}\|D^k n\|_{L^2_x}\|u\|_{L_x^\infty}\big)(s)\ds+\int_0^t \big(\|D^k u\|^2_{L^6_x}\|n\|_{L^3_x}\| n\|_{L^2_x}\big)(s)\ds\\
	&\les \int_0^t \mathcal{U}^L_k(s)\mathcal{U}^L_2(s)\ds.
	\end{align*}
Combining the three estimates above, we have
\begin{align*}
	\sup_{t\in[0,T]}\left|I_1(t)\right|\les  T \mathcal{A}^L_k\Big[\big(\mathcal{A}^L_2\big)^{\frac{1}{2}}+\mathcal{A}^L_2\Big].
\end{align*}
Similarly, we have that $I_2$ has the same estimate as $I_1,$
and then 
\begin{align}
		\label{estmainEI}
	\sup_{t\in[0,T]}|\mathcal{E}^{I}_k(t)|\lesssim    T \mathcal{A}^L_k\Big[\big(\mathcal{A}^L_2\big)^{\frac{1}{2}}+\mathcal{A}^L_2\Big].
\end{align}

\textit{Step 4. Estimate for $\mathcal{A}^L_k$.} 
In the preceding steps, we have derived estimates for \[\sup_{t\in[0,T]}\Big(|\mathcal{E}_0^{W}(t)|+|\mathcal{E}_k^{W}(t)|\Big), \quad |\mathcal{E}_0(0)+\mathcal{E}_k(0)|,  \mbox{ and }	\sup_{t\in[0,T]}|\mathcal{E}^{I}_k(t)|.\] 
We now turn to the estimation of $\mathcal{A}^{L}_k$.
Inserting \eqref{est-EWm}, \eqref{estEW0}, \eqref{estE0}, and \eqref{estmainEI} into \eqref{estEL1}, we have
\begin{align}
\label{bootE1-pre}
\mathcal{A}^L_k
\leq& C_1(\|u_0\|_{H^1}) \|\vec u_0\|^2_{\HH^{k+1}} \notag\\
&+C_2(\|u_0\|_{H^1})
\big(\mathcal{A}^{L}_k\big)^{\frac{2k+5}{4(k+1)}}
\| u_0\|^{2\frac{2k-1}{4(k+1)}}_{L_x^2}
+C_3(\|u_0\|_{H^1})
\big(\mathcal{A}^{L}_k\big)^{\frac{3}{4}}
\| u_0\|_{\dot{H}^k} \notag\\
&+C_4(\|u_0\|_{H^1})T^{\frac{1}{2}}
\mathcal{A}^{L}_k\big(\mathcal{A}^{L}_2\big)^{\frac{1}{2}}
+C_5 T \mathcal{A}^L_k
\Big[\big(\mathcal{A}^L_2\big)^{\frac{1}{2}}+\mathcal{A}^L_2\Big].
\end{align}
For the two sublinear terms in \(\mathcal A_k^L\), Young's inequality gives
\begin{align}
\label{young-Ak-1}
C_2(\|u_0\|_{H^1})
\big(\mathcal{A}^{L}_k\big)^{\frac{2k+5}{4(k+1)}}
\| u_0\|^{2\frac{2k-1}{4(k+1)}}_{L_x^2}
&\leq
\frac18\mathcal{A}^{L}_k
+
C(\|u_0\|_{H^1})\|u_0\|_{L_x^2}^{2},
\\
\label{young-Ak-2}
C_3(\|u_0\|_{H^1})
\big(\mathcal{A}^{L}_k\big)^{\frac34}
\|u_0\|_{\dot H^k}
&\leq
\frac18\mathcal{A}^{L}_k
+
C(\|u_0\|_{H^1})\|u_0\|_{\dot H^k}^{4}.
\end{align}
Therefore, from
\eqref{bootE1-pre}--\eqref{young-Ak-2}, we obtain
\begin{align}
\label{bootE1}
\mathcal{A}^L_k
\leq&
C_1(\|u_0\|_{H^1})
\big(\|\vec u_0\|^2_{\HH^{k+1}}+\|u_0\|_{\dot{H}^{k}}^{4}\big)
+\frac{1}{4}\mathcal{A}^{L}_k \notag\\
&+
C_2(\|u_0\|_{H^1})\mathcal{A}^{L}_k
\Big[
T^{\frac{1}{2}}\big(\mathcal{A}^{L}_2\big)^{\frac{1}{2}}
+
T \big(\mathcal{A}^{L}_2\big)^{\frac{1}{2}}
+
T\mathcal{A}^{L}_2
\Big].
\end{align}
We denote $a_k \coloneqq 2C_1\big(\|\vec u_0\|^2_{\HH^{k+1}}+\|u_0\|_{\dot{H}^{k}}^{4}\big).$ Let $k=2$. From \eqref{bootE1}, we have
\begin{equation}
		\label{bootE2}
		\begin{aligned}	
	\mathcal{A}^L_2
		\leq& C_1(\|u_0\|_{H^1}) \big(\|\vec u_0\|^2_{\HH^3}+\|u_0\|^4_{\dot{H}^{2}}\big)
	+\frac{1}{4}\mathcal{A}^{L}_2							\\
&+C_2(\|u_0\|_{H^1})\mathcal{A}^{L}_2\Big[T^{\frac{1}{2}}\big(\mathcal{A}^{L}_2\big)^{\frac{1}{2}}
+ T \big(\mathcal{A}^{L}_2\big)^{\frac{1}{2}}+T\mathcal{A}^{L}_2\Big].
					\end{aligned}
				\end{equation}
Assume that $\mathcal{A}^L_2\leq a_2=2C_1\big(\|\vec u_0\|^2_{\HH^{3}}+\|u_0\|_{\dot{H}^{2}}^{4}\big)$. There exists $T=T(\|\vec u_0\|_{\HH^{3}})$  such that
	\begin{align}
		\label{conditionT1}
	&\quad C_2\Big[T^{\frac{1}{2}}\big(\mathcal{A}^{L}_2\big)^{\frac{1}{2}}
	+ T \big(\mathcal{A}^{L}_2\big)^{\frac{1}{2}}+T\mathcal{A}^{L}_2\Big]\notag\\
	&\leq C_2\big[T^{\frac{1}{2}}(a_2)^{\frac{1}{2}}+T(a_2)^{\frac{1}{2}}+Ta_2\big]\leq \frac{1}{8},
	\end{align}
	then  for any $t\in [0,T], $ we have
	\begin{equation}
		\label{res-E3}
	\begin{aligned}
	\quad\sup_{t\in [0,T]}\|(u, n,|\al\na|^{-1}n_t)\|^2_{H_x^{3}\times H_x^{2}\times H_x^{2}} 
		&\leq \frac{8}{5}C_1(\|u_0\|_{H^1}) \big(\|\vec u_0\|^2_{\HH^{3}}+\|u_0\|_{\dot{H}^{2}}^{4}\big)\\
			&\leq \frac{4}{5}a_2.
	\end{aligned}
\end{equation}
Then let $k\geq 2$. From \eqref{bootE1} and \eqref{conditionT1}, using Gagliardo-Nirenberg's inequality, we have
	\begin{equation}
		\label{res-E}
		\begin{aligned}
	\quad\sup_{t\in [0,T]}\|\vec u\|^2_{\HH_x^{k+1}} 
		&\leq C(\|u_0\|_{H^1}) \big(\|\vec u_0\|^2_{\HH^{k+1}}+\|u_0\|_{\dot{H}^{k}}^{4}\big)\\
		&\leq C(\|u_0\|_{H^1}) \big(1+\|u_0\|_{\dot{H}^{k-1}}^{2}\big)\|\vec u_0\|^2_{\HH^{k+1}}.
		\end{aligned}
		\end{equation}
Thus we complete the proof of Proposition \ref{prop:energy estimate}. 
\end{proof}

			\subsection{Proof of the uniform local well-posedness.}
			\begin{proof}[Proof of Proposition \ref{prop:lwpZ}]
				Let \(\vec u_0\in \HH^{k+1}\) with \(k\geq 2\), and set
				\[
				A:=C(\|u_0\|_{H^{k-1}})\|\vec u_0\|_{\HH^{k+1}},
				\]
				where \(C\) is the constant in \eqref{lwp}. By Corollary \ref{lem:lwp1}, there exists
				\(T_1^{(\al)}=T_1^{(\al)}(\|\vec u_0\|_{\HH^{k+1}},\al)>0\) such that the solution satisfies
				\[
				\vec u\in C\big([0,T_1^{(\al)}];\HH^{k+1}\big).
				\]
				On the other hand, Proposition \ref{prop:energy estimate} gives the uniform $\mathit{a\ priori}$ bound
				\[
				\sup_{0\leq t\leq T_1^{(\al)}}
				\|\vec u(t)\|_{\HH^{k+1}}
				\leq A,
				\]
	and
				$\vec u (T_1^{(\al)})\in \HH^{k+1}. $ 
				Using Corollary \ref{lem:lwp1} again, there exists 
			\[T_2^{(\al)}=T_2^{(\al)}\big(\big\|\vec u(T_1^{(\al)})\big\|_{\HH^{k+1}},\al\big)>0, \]  such that the solution can be extended further
				$$\vec u \in C\big([T_1^{(\al)},T_1^{(\al)}+T_2^{(\al)}];\HH^{k+1}\big). $$
				Therefore, by the continuation criterion associated with
				Corollary \ref{lem:lwp1}, the local solution can be extended
				to the interval \([0,T]\), that is
				\[
				\vec u\in C\big([0,T];\HH^{k+1}\big),
				\qquad
				\|\vec u\|_{L_t^\infty([0,T];\HH^{k+1})}\leq A.
				\]
				This completes the proof of Proposition \ref{prop:lwpZ}.
			\end{proof}

			\vspace{0.5cm} 
			
			\section{Proof of the subsonic limit}
			\label{sec:proofofmain}
			
			\vspace{0.5cm}
			
			\subsection{Basic settings for the subsonic limit}
			\begin{assu} 
				\label{assu:main}
				We make the following assumptions:
				\begin{enumerate}
					\item Let $\vec u_0\in \HH^3$. Let
					\(\vec u(t)\), \(v\), and \(w\) be the corresponding solutions to
					\eqref{equ-Z}, \eqref{equ-NLS}, and \eqref{equ-w}, respectively, with
					\[
					v_0=u_0,\qquad
					w_0=n_0+|u_0|^2,\qquad
					w_1=n_1+\partial_t|u|^2(0).
					\]
					\item Define the error functions
					    \[
					    r:=u-v,\qquad q:=n+|u|^2-w.
					    \]
				
				
				\end{enumerate}
			\end{assu}
\begin{remark}
		Under this assumption, one has $v_0\in H^3$, $w_0\in H^2$, and
		\(|\alpha\nabla|^{-1}w_1\in H^2\).
			\end{remark}
			
			Therefore, under Assumption \ref{assu:main}, $(r,q)$ solves the following system:
			\begin{align}
				\label{equ-r,q}
				\left\{
				\begin{aligned}
					&2i\partial_{t}r-\Delta r=|u|^2u-|v|^2v-(w+q)(v+r),\\
					&\alpha^{-2}\partial_{tt}q-\Delta q=\al^{-2}\partial_{tt}|u|^2,\\
					&r(0,x)\coloneqq r_0(x), \quad q(0,x)\coloneqq q_0(x), \quad \partial_{t}q(0,x)\coloneqq q_1(x),
				\end{aligned}
				\right.
			\end{align}
			with the initial data condition:
			\EQn{
				r_0\coloneqq u_0-v_0=0;\quad q_0\coloneqq n_0+|u_0|^2-w_0=0; \quad q_1\coloneqq n_1+\partial_t|u|^2(0)-w_1=0.
			}			
Next, we apply the scaling transform. Recall the definition $ S_\al f(t,x)=\al^{-1}f(\al^{-2}t, \al^{-1}x) $ in \eqref{scaling-S,W}, and we will frequently use the following relationship:
			\begin{align}
				\label{rel:Salf-f}
				\|S_\al f\|_{\mS^{\lambda}([0,\al^2T])}=\al^{-\lambda+\frac{1}{2}} \| f\|_{\mS^{\lambda}([0,T])}, \quad \mbox{ for } \lambda\in\R.
			\end{align} 
			We denote that
			\EQn{\label{eq:scaling}
			(u_{\al},n_{\al},v_{\al},w_{\al},\Phi_{\al},\Psi_{\al})\coloneqq S_{\al}(u,n,v,w,r,q).
			}
			By \eqref{equ-NLS} and \eqref{equ-w}, the rescaled profiles $v_{\al}$ and $w_{\al}$ satisfy the following equations, respectively:
			\begin{align}
				\label{equ-v_al}
				&	\left\{
				\begin{aligned}
					&2i\partial_{t}v_\al-\Delta v_\al=|v_\al|^2v_\al,\\
					&v_\al(0,x)=S_{\al}v_0(x),
				\end{aligned}
				\right.
				\\
				\label{equ-Ial}
				& 	\left\{
				\begin{aligned}
					&\partial_{tt}w_{\al}-\De w_{\al}=0, \\
					&w_{\al}(0,x)=S_{\al}w_0(x),\quad \partial_{t}w_{\al}(0,x)=\al^{-2}S_{\al}w_1(x). 
				\end{aligned}
				\right.	
			\end{align}
By \eqref{equ-Z}, we obtain that the rescaled solution $(u_\al,n_\al)$ satisfies the following system:
\begin{align}
\label{equ-u,n_al}
	&	\left\{
	\begin{aligned}
&2i\partial_{t}u_\al-\Delta u_\al=-\al^{-1}n_\al u_\al,\\
&\partial_{tt}n_\al -\De n_\al =\al \De|u_\al|^2,\\
&u_\al (0,x)=S_{\al}u_0(x),\quad n_\al (0,x)=S_{\al}n_0(x),\quad  \partial_tn_\al (0,x)=\al^{-2}S_{\al}n_1(x).
\end{aligned}
\right.
\end{align}

Moreover, by \eqref{equ-r,q}, we obtain the equations for the rescaled error functions $\Phi_{\al}$ and $\Psi_{\al}$,
			$\Phi_{\al}$ satisfies the equation:
			\begin{align}
				\label{equ-Phial}
				\left\{
				\begin{aligned}
					&2i\partial_{t}\Phi_{\al}-\De \Phi_{\al}=|u_{\al}|^2u_{\al}-|v_{\al}|^2v_{\al}-\al^{-1}(w_{\al}+\Psi_{\al})(v_{\al}+\Phi_{\al}), \\
					&\Phi_{\al}(0,x)=S_\al r_0=0,
				\end{aligned}
				\right.
			\end{align}
			and $\Psi_{\al}$ satisfies the equation:
			\begin{align}
				\label{equ-Psial}
				\left\{
				\begin{aligned}
					&\partial_{tt}\Psi_{\al}-\De \Psi_{\al}=\al \partial_{tt}|u_\al|^2, \\
					&\Psi_{\al}(0)=S_\al q_0, \quad \partial_t \Psi_{\al}(0,x)= \al^{-2}S_\al q_1=0.
				\end{aligned}
				\right.
			\end{align}

Before proving the subsonic limit, we give the local theory of the Zakharov system and the Schr\"odinger profile, including local well-posedness and uniform bound. Applying Proposition \ref{prop:lwpZ} and Lemma \ref{lem-NLS}, we have that
\begin{lem}[Local theory for subsonic limit]\label{lem:local-theory}
Let Assumption \ref{assu:main} hold. Then, there exists $T=T(\norm{\vec u_0}_{\HH^3},\norm{v_0}_{H^1})$ such that
\begin{enumerate}
\item \eqref{equ-Z} admits a unique solution $\vec u\in C([0,T];\HH^3)$, and \eqref{equ-NLS} admits a unique solution $v\in C([0,T];H^1)$. Moreover,
\EQ{
\| \vec u\|_{L_t^\I([0,T];\HH^{3})}   \leq C\|\vec u_0\|_{\HH^{3}},
}
where the constant $C$ depends on $\|\vec u_0\|_{\HH^{1}}$.
\item 
Given $k>2$. If further assume that $\vec u_0\in\HH^{\lceil k+1\rceil}$, then the solution of \eqref{equ-Z}  satisfies $\vec u\in C([0,T];\HH^{\lceil k+1\rceil})$ and
\EQ{
\| \vec u\|_{L_t^\I([0,T];\HH^{ k+1})}   \leq C\|\vec u_0\|_{\HH^{\lceil k+1\rceil}},
}
where the constant $C$ depends on $\|\vec u_0\|_{\HH^{\lceil k-1\rceil}}$.
\item 
If $v_0\in H^s$ for some $s\geq1$, then  $\norm{v}_{X^s(I)}\le C\norm{v_0}_{H^s}$.
\end{enumerate}
In the following, we denote
\[
I\coloneqq [0,T], \qquad I^\alpha\coloneqq [0,\alpha^2T].
\]
We keep the dependence on \(\vec u_0\), \(v_0\), and
\((w_0,|\alpha\nabla|^{-1}w_1)\) explicit in the intermediate estimates
in order to indicate the origin of the constants. Under Assumption \ref{assu:main}, these
dependencies will be absorbed into the dependence on the Zakharov initial
data in the proof of Theorem \ref{mainthm}.
\end{lem}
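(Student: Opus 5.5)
The plan is to assemble Lemma \ref{lem:local-theory} directly from the results already established: Proposition \ref{prop:lwpZ} handles the Zakharov component and Lemma \ref{lem-NLS} the Schr\"odinger profile. The one real choice is the common time $T$. Let $T_Z=T(\|\vec u_0\|_{\HH^3})$ be the $\al$-independent time from Proposition \ref{prop:lwpZ} with $l=2$, and $T_N=T(\|v_0\|_{H^1})$ the time from Lemma \ref{lem-NLS}. Set $T=\min\{T_Z,T_N\}$. This $T$ depends only on $\|\vec u_0\|_{\HH^3}$ and $\|v_0\|_{H^1}$, and is independent of $\al$. Under Assumption \ref{assu:main} we have $v_0=u_0$, so $T_N$ is in fact controlled by $\|\vec u_0\|_{\HH^3}$ as well.

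For part (1), apply Proposition \ref{prop:lwpZ} with $l=2$. This gives existence and uniqueness of $\vec u\in C([0,T];\HH^3)$ together with the bound $\|\vec u\|_{L^\I_t\HH^3}\le C\|\vec u_0\|_{\HH^3}$. Here $C$ depends on $\|u_0\|_{H^{1}}$, and hence on $\|\vec u_0\|_{\HH^1}$, as claimed. Since $T\le T_N$, Lemma \ref{lem-NLS} gives existence and uniqueness of $v\in X^1([0,T])\subset C([0,T];H^1)$. Restricting both solutions to $[0,T]$ preserves the bounds.

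For part (2), take the integer $l=\lceil k+1\rceil-1\ge 2$, which is valid because $k>2$. Apply Proposition \ref{prop:lwpZ} at this level. The existence time still depends only on $\|\vec u_0\|_{\HH^3}$, so the same $T$ works, and we get $\vec u\in C([0,T];\HH^{\lceil k+1\rceil})$ with the bound in $\HH^{\lceil k+1\rceil}$. The constant depends on $\|u_0\|_{H^{l-1}}=\|u_0\|_{H^{\lceil k+1\rceil-2}}$, which is bounded by $\|\vec u_0\|_{\HH^{\lceil k-1\rceil}}$ because $\lceil k+1\rceil-2=\lceil k-1\rceil$. The $\HH^{k+1}$ norm of the solution is then controlled trivially by the embedding $\HH^{\lceil k+1\rceil}\hookrightarrow\HH^{k+1}$.

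For part (3), use the persistence-of-regularity statement in Lemma \ref{lem-NLS}. On the interval where the $H^1$ solution exists, $v_0\in H^s$ implies $\|v\|_{X^s}\lesssim\|v_0\|_{H^s}$. Since $T\le T_N$, this holds on $I$.

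The only point needing care is that $T$ must not depend on the higher norms or on $\al$. Proposition \ref{prop:lwpZ} guarantees this for the Zakharov component. For the Schr\"odinger component, Lemma \ref{lem-NLS} gives a time depending only on the $H^1$ norm, with higher regularity persisting on that interval; I take this from the lemma rather than reprove it. No other obstacle is expected.
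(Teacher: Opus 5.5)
Your proposal is correct and is exactly the paper's argument: the lemma is obtained by combining Proposition \ref{prop:lwpZ} (with $l=2$, and with $l=\lceil k+1\rceil-1$ for part (2)) and Lemma \ref{lem-NLS}, taking $T$ as the minimum of the two existence times. The one point you assert rather than check is that the same $T$ serves every regularity level $l$. This is indeed what the proof of Proposition \ref{prop:energy estimate} gives: there $T$ is fixed by the $k=2$ condition \eqref{conditionT1}, and the higher-order inequality \eqref{bootE1} closes on that same interval.
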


	\subsection{Main nonlinear estimates}
	Now, we prove the nonlinear estimate in the more regular case. Recall that $\Phi_{\alpha}$ and $\Psi_{\alpha}$ solve the equations \eqref{equ-Phial} and \eqref{equ-Psial}, respectively.
\begin{prop}[Nonlinear estimates]
	\label{prop:estPsiPhi}
	Let Assumption \ref{assu:main} hold. 
		Given $\gamma\in[0,\frac{1}{2}]$, $0\leq \kappa_1 \leq \frac{1}{2},$ and $ -\frac{1}{2}\leq \kappa_2\leq 1$. Assume that
	$\vec u_0\in \HH^{\lceil3+\gamma-\kappa_1\rceil}\cap\HH^{\lceil3+\gamma+\kappa_2\rceil}$.
		Then for any $\al\ges1$,
		\begin{equation}
		\label{eq:nonlinear-estimate-psi}
		\begin{aligned}
&\quad\big\|(\Phi_\al,\Psi_\al)\big\|_{\mS^{\gamma}\times\mW^{\gamma}(I^\al)}\\
	&\leq 
C_1 \al^{-\gamma-\frac{3}{2}+\kappa_1 }\|\vec{u}\|_{L^\infty_t(I;\HH^{3+\gamma-\kappa_1})}
\Bigl[1+\|(w_0, |\al\na|^{-1}w_1)\|_{H^{1+\gamma}\times H^{1+\gamma}}\Bigr] \\
	&\quad +C_1 \al^{-\gamma-\frac{1}{2}-\kappa_2} \|\vec{u}\|_{L^\infty_t(I;\HH^{3+\gamma+\kappa_2})}  \\
&\quad + C_2 \|(w_0,|\al\na|^{-1}w_1)\|_{H^{1+\gamma}\times H^{1+\gamma}}
		\Bigl(\al^{-\gamma-\frac{1}{2}} + \al^{-\gamma-1+\frac{\varepsilon}{2}}\|v\|_{X^{2+\gamma}(I)}\Bigr) \\
		&\quad + C_2 \|(\Phi_\alpha, \Psi_\alpha)\|_{\mS^\gamma \times \mW^\gamma(I^\alpha)}
		\Bigl( \|\Phi_{\al}\|^2_{\mS^{\frac{1}{2}}(I^\al)}
		+ T^{\frac{1}{2}} \big\|(\Phi_{\al},\Psi_\al)\big\|_{\mS^{\frac{1}{2}}\times \mW^{\frac{1}{2}}(I^{\al})}
		+ T^{\frac{1+2\gamma}{4}} \Bigr);
				\end{aligned}
		\end{equation} 	
	where the constants \(C_1,C_2\) are given by \[
	C_1=C_1\big(\|\vec u_0\|_{\HH^3}\big),\qquad
	C_2=C_2\big(
\|(v_0,w_0,|\al\na|^{-1}w_1)\|_{H^{1+\gamma}\times H^{1+\gamma}\times H^{1+\gamma}}
	\big).
	\]
		\end{prop}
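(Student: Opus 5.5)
We will estimate $\Phi_\al$ and $\Psi_\al$ through their Duhamel formulations on $I^\al=[0,\al^2T]$, working in the rescaled variables. The relation \eqref{rel:Salf-f} converts each unscaled Sobolev norm into powers of $\al$. For the wave error we write $\Psi_\al$ in half-wave form $\Psi_{\al,\pm}=\Psi_\al\mp i|\na|^{-1}\p_t\Psi_\al$. Since $\Psi_\al(0)=\p_t\Psi_\al(0)=0$, we have
\[
\Psi_\al(t)=\al\int_0^t \frac{\sin((t-s)|\na|)}{|\na|}\,\p_{ss}|u_\al|^2(s)\ds .
\]
For the Schr\"odinger error, $\Phi_\al$ is given by Duhamel's formula with forcing $|u_\al|^2u_\al-|v_\al|^2v_\al-\al^{-1}(w_\al+\Psi_\al)(v_\al+\Phi_\al)$. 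We estimate it in $\mS^\gamma$ via Lemma \ref{lem:stri} and, where bilinear structure is needed, in $U^2_\De$ via Lemma \ref{Lem:linearforup}.

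\emph{Nonlinear and error-dependent terms.} The cubic difference $|u_\al|^2u_\al-|v_\al|^2v_\al$ is a sum of terms with at least one factor $\Phi_\al$ and two factors among $u_\al,v_\al$. The terms $\al^{-1}\Psi_\al(v_\al+\Phi_\al)$ and $\al^{-1}w_\al\Phi_\al$ are handled similarly. We bound all of these by the dual Strichartz norm $L^2_tW^{\gamma,6/5}$, using Kato--Ponce (Lemma \ref{lem:kato-Ponce}). The factors $u_\al,v_\al$ are placed in Strichartz norms that, by scaling, carry $T$-powers, which gives the term $T^{\frac{1+2\gamma}{4}}\|(\Phi_\al,\Psi_\al)\|$. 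The genuinely quadratic-in-error pieces produce $\|\Phi_\al\|^2_{\mS^{1/2}}$ and $T^{1/2}\|(\Phi_\al,\Psi_\al)\|_{\mS^{1/2}\times\mW^{1/2}}$. These contributions together form the last line of \eqref{eq:nonlinear-estimate-psi}.

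\emph{Wave forcing.} We expand
\[
\p_{tt}|u_\al|^2=2\re(\bar u_\al\p_{tt}u_\al)+2|\p_tu_\al|^2
\]
and use the equation \eqref{equ-u,n_al} to replace each $\p_t u_\al$ by $\Delta u_\al$ plus $\al^{-1}n_\al u_\al$. We then decompose by frequency.
\begin{enumerate}[(i)]
\item In the resonant/high part $|\xi|\gtrsim 1$ (original frequency $\gtrsim\al$), we estimate crudely with energy bounds from Proposition \ref{prop:lwpZ}, paying derivatives. Each extra derivative, through \eqref{rel:Salf-f}, gains $\al^{-1}$. This yields the term $\al^{-\gamma-\frac12-\kappa_2}\|\vec u\|_{\HH^{3+\gamma+\kappa_2}}$.
\item In the non-resonant part $|\xi|\ll1$, the phase $\pm|\xi|-\frac12|\xi_1|^2+\frac12|\xi-\xi_1|^2$ is nondegenerate in the high--low configuration. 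We perform a normal form, integrating by parts in $s$. The resulting symbols are of Coifman--Meyer type (Lemma \ref{lem:Coif}). The boundary terms and the terms where $\p_s$ falls on the profiles (reinserting the equations) gain $\al^{-1}$ relative to a direct Strichartz bound. This gives the $\al^{-\gamma-\frac32+\kappa_1}$ term; the factor $\|(w_0,|\al\na|^{-1}w_1)\|$ appears because $n_\al=-|u_\al|^2+w_\al+\Psi_\al$ enters when $\p_t u_\al$ is substituted.
\end{enumerate}

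\emph{Linear forcing $\al^{-1}w_\al v_\al$ in the $\Phi$ equation.} This is \eqref{intro:Sterm}. We use the splitting normal form: write $e^{is\phi}=(i\phi_2)^{-1}\p_s(e^{is\phi_2})e^{is\phi_1}e^{is\phi_3}$ and integrate by parts. This moves $|\na|^{-1}$ onto $w_\al$ and $|\na|^2$ (or $\p_s$, via \eqref{equ-v_al}) onto $v_\al$. The result is $\al^{-\gamma-\frac12}\|(w_0,|\al\na|^{-1}w_1)\|$ from the boundary terms and $\al^{-\gamma-1+\varepsilon/2}\|v\|_{X^{2+\gamma}}$ from the integral term. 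The $\varepsilon$-loss comes from summing over the low frequencies of $w_\al$ with Bernstein and Schur's test (Lemma \ref{lem:Schur}). The bilinear estimate Corollary \ref{lem:bi-linear-W} treats the transversal interaction of high $w$ with low $v$ and recovers the factor $\min\{N,M\}$.

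\emph{Main obstacle.} I expect the hardest step to be the non-resonant normal form for $\al\p_{tt}|u_\al|^2$. There the time derivative reproduces $\al^{-1}n_\al u_\al$ and $\p_t n_\al$, which is only controlled as $\al^{-1}|\na|^{-1}\p_t n$ at $\HH^{3+\gamma-\kappa_1}$ regularity. This must be tracked carefully so that exactly $\al^{-\gamma-\frac32+\kappa_1}$ emerges without extra derivative loss at low frequency. I would first try to close this piece using $L^\infty_tL^2$ bounds combined with the $L^2_tL^6$ Strichartz norm, and fall back on the bilinear estimate when two low-frequency Schr\"odinger factors appear.
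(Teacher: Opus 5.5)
\emph{The main gap is in the high-frequency part of the wave forcing.} You identify the resonant set with $|\xi|\gtrsim1$ and estimate that whole region crudely, ``paying derivatives''. But the high--low phase $\phi_1=|\xi|+\frac12|\xi_1|^2-\frac12|\xi-\xi_1|^2$ is resonant only when $|\xi_1|\sim1$; for $|\xi_1|\gg1$ one has $\phi_1\sim|\xi_1|^2$.

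The crude bound $\al|I^\al|\,\||\na|^{3+\gamma}P_{\gg1}u_\al\|_{L^\infty_tL^2_x}\|u_\al\|_{L^\infty_{t,x}}$ becomes $\al^{-\frac12-\gamma-\kappa_2}\|u\|_{\dot H^{3+\gamma+\kappa_2}}$ only if $\kappa_2\ge0$, because at frequencies $\gg1$ you cannot lower the derivative count. The proposition allows $\kappa_2\in[-\frac12,0)$. The case $(\gamma,\kappa_1,\kappa_2)=(\frac12,\frac12,-\frac12)$ with only $\vec u_0\in\HH^3$ is exactly what Lemma \ref{lem:estPhiPsi-12} uses, and your route would need $u\in\dot H^{7/2}$ there. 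The paper estimates crudely only the block $N_1\sim1$, where $N_1^{\kappa_2}\sim1$ for either sign of $\kappa_2$. For $N_1\gg1$ it also performs a normal form (symbol $m_2$), gaining two derivatives.

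Your plan also omits the comparable-frequency interactions $N_1\lesssim N_2$. The paper handles them by writing $\p_t|u_\al|^2=-\re\nabla\cdot(\nabla u_\al\,i\bar u_\al)$. The $|\na|^{-1}$ of the wave Duhamel term is then absorbed by a divergence, and Lemma \ref{lem:part} with $\dot W^{1,\frac65}\hookrightarrow L^2$ closes. This form also removes your ``main obstacle''. In your own expansion the $\p_tn_\al$ contribution, $\re(i\al^{-1}\p_tn_\al|u_\al|^2)$, vanishes identically because $n_\al$ is real.

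\emph{Second, the bilinear estimate belongs elsewhere than where you put it.} The factor $\|(w_0,|\al\na|^{-1}w_1)\|_{H^{1+\gamma}\times H^{1+\gamma}}$ in the first line does not come from normal-form remainders; there $n_\al$ is simply bounded in $H^{2+\gamma-\kappa_1}$. It comes from the cubic part of $\al|\na|^{-1}\p_{tt}|u_\al|^2$, which has no time-oscillation gain. Inserting $n_\al=-\al|u_\al|^2+w_\al+\Psi_\al$ into its all-low-frequency block gives $\int_0^te^{-i(t-s)|\na|}|\na|^{1+\gamma}(P_{\ll1}w_\al\,|P_{\ll1}u_\al|^2)\,ds$.

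Neither normal form applies to this term, and Strichartz alone loses $\al^{\varepsilon}$, which is incompatible with the exact power at $\kappa_1=0$. The paper uses $V^2_{+|\na|}$-duality and applies Corollary \ref{lem:bi-linear-W} twice. It pairs $P_{N_1}w_{\al,+}$ with $P_{N_2}u_\al$, and $P_{N_3}u_\al$ with the dual function $P_Ng$. This puts one derivative on each low-frequency Schr\"odinger factor (Lemma \ref{lem:estU2-u_al}), which yields the required $\al^{-\frac32-\gamma}$.

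By contrast, $\al^{-1}w_\al v_\al$ in the $\Phi$-equation needs no bilinear input. The splitting normal form, followed by the non-endpoint wave Strichartz bound \eqref{lem:est:w_al(2,in)} and H\"older in time, gives $\al^{-\gamma-1+\frac\varepsilon2}$. The $\varepsilon$ comes from avoiding the forbidden $L^2_tL^\infty_x$ endpoint, not from a Schur sum over low frequencies.
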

					
		We will prove Proposition \ref{prop:estPsiPhi} in Sections \ref{sec:estq} and \ref{sec:estr}. To start with,  we gather some useful estimates on $(u_\al,n_\al)$ of \eqref{equ-u,n_al}, the profiles $v_\al$ and $w_\al$. First, we need the $U^2$-estimate for $u_\al$:
\begin{lem}
\label{lem:estU2-u_al}
Let Assumption \ref{assu:main} hold. Assume $\vec{u}_0\in\HH^{\lceil s\rceil+1}$ with $s\geq0$, then
\begin{align}
&\||\na|^{s}P_N u_\al\|_{\ell^2_NU_{\De}^2(I^\al;L^2_x)} \leq C\al^{-s+\frac{1}{2}}\|(u,n)\|_{L^\infty_t(I;H^s\times H^s)} \Big(1+T\|(u,n)\|_{L^\infty_t(I;H^2\times H^2)}\Big),
\end{align}
where $I^\al,I$, and $(u_\al,n_\al)$ are given as above.
\end{lem}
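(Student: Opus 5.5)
We write $u_\al$ via Duhamel from the first equation of \eqref{equ-u,n_al}:
\[
u_\al(t)=e^{-\frac12 it\De}S_\al u_0+\frac{i}{2}\al^{-1}\int_0^t e^{-\frac12 i(t-s)\De}\big(n_\al u_\al\big)(s)\ds,
\]
with signs according to the convention $2i\p_t u-\De u=F$. We apply $|\na|^sP_N$, take the $U^2_\De(I^\al;L^2_x)$ norm, and square-sum in $N$. The linear term is handled by Lemma \ref{Lem:linearforup}. It gives $\|\,|\na|^sP_NS_\al u_0\|_{\ell^2_NL^2}\sim\||\na|^sS_\al u_0\|_{L^2}=\al^{-s+\frac12}\|u_0\|_{\dot H^s}$, by the scaling identity \eqref{rel:Salf-f} at fixed time. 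This bound is controlled by the first factor on the right-hand side.

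For the Duhamel term we avoid bilinear or $V^2$-duality arguments. Instead we use the crude bound $\|\int_0^te^{-\frac12 i(t-s)\De}F\ds\|_{U^2_\De(I^\al)}\les\|F\|_{L^1_tL^2_x(I^\al)}$. This follows from the duality formula in Lemma \ref{Lem:linearforup} together with $V^2_\De\hookrightarrow L^\infty_tL^2_x$. Alternatively, one can observe directly that $e^{\frac12 it\De}\int_0^t\cdots$ is an $L^1$-in-time superposition of step atoms. Applying this with $F=\al^{-1}|\na|^sP_N(n_\al u_\al)$ and using Minkowski to exchange $\ell^2_N$ with $L^1_t$, we reduce to bounding
\[
\al^{-1}\big\||\na|^s(n_\al u_\al)\big\|_{L^1_tL^2_x(I^\al)}\le \al^{-1}\cdot\al^2T\sup_{t\in I^\al}\||\na|^s(n_\al u_\al)(t)\|_{L^2_x}.
\]

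The key scaling step is the following. Since $n_\al u_\al(t,x)=\al^{-2}(nu)(\al^{-2}t,\al^{-1}x)$, we have $\||\na|^s(n_\al u_\al)(t)\|_{L^2}=\al^{-2}\al^{-s+\frac32}\||\na|^s(nu)(\al^{-2}t)\|_{L^2}$. Hence the Duhamel contribution is bounded by $T\al^{-s+\frac12}\sup_{t\in I}\||\na|^s(nu)\|_{L^2}$, and the powers of $\al$ match the linear term exactly. It remains to use the Kato--Ponce inequality (Lemma \ref{lem:kato-Ponce}) with Sobolev embedding $H^2\hookrightarrow L^\infty$:
\[
\||\na|^s(nu)\|_{L^2}\les\||\na|^sn\|_{L^2}\|u\|_{L^\infty}+\||\na|^su\|_{L^2}\|n\|_{L^\infty}\les\|(u,n)\|_{H^s\times H^s}\|(u,n)\|_{H^2\times H^2}.
\]
Taking the supremum over $I$ yields the claimed factor $1+T\|(u,n)\|_{L^\infty_t(I;H^2\times H^2)}$.

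The finiteness of the norms on $I$ is guaranteed by Lemma \ref{lem:local-theory} under the assumption $\vec u_0\in\HH^{\lceil s\rceil+1}$. The only delicate point is justifying the $L^1_tL^2_x\to U^2_\De$ inhomogeneous bound on the restricted interval $I^\al$ with $\inf I^\al=0$, which is where Lemma \ref{Lem:linearforup} is invoked. One must also check the homogeneous-norm scaling so that no stray power of $\al$ appears. The potentially large factor $\al^2T$ from the length of $I^\al$ is exactly cancelled by $\al^{-1}\cdot\al^{-2}\cdot\al^{1}$ coming from the coupling constant and the $W_\al$-type scaling of the product, so this bookkeeping is the main thing to verify.
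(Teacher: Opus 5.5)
Your proposal is correct and follows essentially the same route as the paper's proof: Duhamel's formula, then the linear $U^2_\Delta$ estimate together with the duality formula and $V^2_\Delta\hookrightarrow L^\infty_tL^2_x$, then Minkowski and Littlewood--Paley to reduce to $\alpha^{-1}\||\nabla|^s(n_\alpha u_\alpha)\|_{L^1_tL^2_x(I^\alpha)}$, and finally a Leibniz-type bound with $L^\infty$ control plus scaling; your scaling bookkeeping, including the cancellation of the $\alpha^2T$ from $|I^\alpha|$, checks out. The only cosmetic difference is that you rescale the product $n_\alpha u_\alpha$ first and apply Kato--Ponce to the unscaled functions, whereas the paper applies the product estimate to the rescaled factors and scales each factor separately.
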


\begin{proof}
Using Duhamel's formula of $u_\al$, Lemma \ref{Lem:linearforup}, the embedding $V_\De^2(\R;L^2_x)\hookrightarrow L^\infty_tL^2_x(\R),$  H\"older's inequality, Minkowski's inequality, and Lemma \ref{lem:Little},
\begin{align}
\label{est:upu_al}
&\quad\||\na|^sP_Nu_\al\|_{\ell^2_NU_{\De}^2(I^\al;L^2_x)}\notag\\
&\les\|e^{-\frac{i}{2}t\De}|\na|^{s}P_NS_\al u_0\|_{\ell^2_NU_{\De}^2(I^\al;L^2_x)}
+\al^{-1}\left\|\big\|\int_{0}^{t}e^{-\frac{i}{2}(t-s)\De}|\na|^{s}P_N(n_\al u_\al)\ds \big\|_{U_{\De}^2(I^\al;L^2_x)}\right\|_{\ell^2_N}\notag\\
&\les \|P_NS_\al u_0\|_{\ell^2_N\dot{H}^s}+\al^{-1}\Big\|\sup_{\|h\|_{V^2_{\De}(I^\al;L^2_x)}=1}\big|\int_{I^\al}\int|\na|^{s}P_N(n_\al u_\al)\bar{h}(t) \dx\dt\big|\Big\|_{\ell^2_N}
\notag\\
&\les  \|S_\al u_0\|_{\dot{H}^s}+\al^{-1}\||\na|^{s}P_N(n_\al u_\al)\|_{\ell^2_NL^1_tL^2_x(I^\al)}
\notag\\
&\les \|S_\al u_0\|_{\dot{H}^s}+\al^{-1}\||\na|^{s}P_N(n_\al u_\al)\|_{L^1_tL^2_x\ell^2_N(I^\al)}
\notag\\
&\les \|S_\al u_0\|_{\dot{H}^s}+\al^{-1}\||\na|^{s}(n_\al u_\al)\|_{L^1_tL^2_x(I^\al)}.
\end{align}
Combining \eqref{rel:Salf-f} and H\"{o}lder's inequality, we have
\begin{align*}
&\quad\al^{-1}\||\na|^{s}(n_\al u_\al)\|_{L^1_tL^2_x(I^\al)}\\
&\les \al^{-1}|I^\al|\big[\||\na|^sn_\al \|_{L^\infty_tL^2_x(I^\al)}\|u_\al \|_{L^\infty_{t,x}(I^\al)}+\||\na|^su_\al \|_{L^\infty_tL^2_x(I^\al)}\|n_\al \|_{L^\infty_{t,x}(I^\al)}\big]
\\
&\les  \al^{-s+\frac{1}{2}}T\|(u,n)\|_{L^\infty_t(I;H^s\times H^s)}\|(u,n)\|_{L^\infty_t(I;H^2\times H^2)}.
\end{align*}
Thus we complete the proof. 
\end{proof}

We also need the following estimate for the solution $v_\al$ of \eqref{equ-v_al}:
\begin{lem}
Let $ s\geq0$, $v_0\in H^s$, and let $v_\al$ and $v$ be the solutions of \eqref{equ-v_al} and \eqref{equ-NLS}, respectively. Then
\begin{align}
\label{lem:est-v_al}
&\|v_\al\|_{\mS^s (I^\al)} =\al^{-s+\frac{1}{2}}\|v\|_{L^\infty_t\dot{H}^s_x\cap L^2_t\dot{W}^{s,6}_x (I)} \leq C \al^{-s+\frac{1}{2}}\|v_0 \|_{H^s\cap H^1},
\end{align}
where $I^\al,I,v_\al$ are given as above.
\end{lem}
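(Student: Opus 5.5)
The identity in \eqref{lem:est-v_al} is a pure scaling computation, and the inequality then follows from Lemma \ref{lem-NLS}, part (3) of Lemma \ref{lem:local-theory}, and the definition of the $X^s$ norm.

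\textbf{Step 1: the scaling identity.} I would first check the two pieces of the $\mS^s$ norm under $S_\al$ on $I^\al=[0,\al^2T]$, which is exactly \eqref{rel:Salf-f} with $\lambda=s$.

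For fixed $t$, set $t'=\al^{-2}t$. Since $v_\al(t,x)=\al^{-1}v(t',\al^{-1}x)$, Fourier scaling gives
\[
\||\na|^s v_\al(t)\|_{L^2_x}=\al^{-1}\al^{-s}\al^{3/2}\||\na|^s v(t')\|_{L^2_x}=\al^{-s+\frac12}\||\na|^s v(t')\|_{L^2_x}.
\]
Taking the supremum over $t\in I^\al$, which is the same as $t'\in I$, handles the $L^\infty_tL^2_x$ part.

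For the $L^2_tL^6_x$ part, the $L^6_x$ norm produces the factor $\al^{-1}\al^{-s}\al^{3/6}=\al^{-s-\frac12}$. The time integral $\big(\int_0^{\al^2T}\cdot\,dt\big)^{1/2}$ becomes $\al\big(\int_0^T\cdot\,dt'\big)^{1/2}$. The total factor is again $\al^{-s+\frac12}$.

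This gives
\[
\|v_\al\|_{\mS^s(I^\al)}=\al^{-s+\frac12}\|v\|_{L^\infty_t\dot H^s_x\cap L^2_t\dot W^{s,6}_x(I)}.
\]
I also need that $S_\al v$ solves \eqref{equ-v_al}. This follows because the cubic NLS is invariant under $\lambda^{-1}v(\lambda^{-2}t,\lambda^{-1}x)$, and uniqueness then identifies $v_\al$ with $S_\al v$.

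\textbf{Step 2: the bound.} The homogeneous norm $\|v\|_{L^\infty_t\dot H^s\cap L^2_t\dot W^{s,6}(I)}$ is controlled by $\|v\|_{X^s(I)}$. When $s\ge1$, Lemma \ref{lem-NLS} with $v_0\in H^s$ gives $\|v\|_{X^s(I)}\les\|v_0\|_{H^s}$.

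When $0\le s<1$, I bound $|\na|^s$ by $\jb{\na}$ through the multiplier $|\xi|^s\jb{\xi}^{-1}$, which is bounded on $L^2$ and on $L^6$. Then Lemma \ref{lem-NLS} at regularity $1$ gives
\[
\|v\|_{L^\infty_t\dot H^s\cap L^2_t\dot W^{s,6}(I)}\les\|v\|_{X^1(I)}\les\|v_0\|_{H^1}.
\]

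In both cases the bound is $C\|v_0\|_{H^s\cap H^1}$. The time interval $I$ is the one fixed in Lemma \ref{lem:local-theory}, whose length depends only on $\|v_0\|_{H^1}$. The constant $C$ depends on $\|v_0\|_{H^1}$, and through the persistence of regularity also on $\|v_0\|_{H^s}$.

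\textbf{Main obstacle.} There is no real obstacle. The one place that needs care is the bookkeeping of the exponents, especially the $\al$ factor from the time rescaling, which the $L^\infty_t$ part does not produce but the $L^2_t$ part does.
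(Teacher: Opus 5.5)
Your proposal is correct and is essentially the paper's argument, which consists only of the scaling relation \eqref{rel:Salf-f} (your Step 1) and Lemma \ref{lem-NLS} (your Step 2). Handling $0\le s<1$ through the bounded multiplier $|\xi|^s\jb{\xi}^{-1}$ and the $X^1$ bound correctly accounts for the $H^s\cap H^1$ on the right-hand side. One small imprecision: $|I|$ also depends on $\|\vec u_0\|_{\HH^3}$, not only on $\|v_0\|_{H^1}$, but this is harmless because the estimate from Lemma \ref{lem-NLS} restricts to subintervals.
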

\begin{proof}
The proof of \eqref{lem:est-v_al} can be obtained by \eqref{rel:Salf-f} and Lemma \ref{lem-NLS}.
\end{proof}

			We define $w_{\al,\pm}(t,x)\coloneqq w_\al \pm i\p_t |\na|^{-1} w_\al$, then 
			\begin{align}
				\label{deco:w_al}
				w_\al=\frac{1}{2}\big(w_{\al,+}+w_{\al,-}\big),
			\end{align}
			and $w_{\al,\pm}$ solve the following equation:
			\begin{align}
				\label{equ-wal_pm}
				& 	\left\{
				\begin{aligned}
					&(i\p_t\mp|\na|)w_{\al,\pm}=0, \\
					&w_{\al,\pm}(0)=S_\al\big( w_0 \pm i |\al\na|^{-1}w_1\big). 
				\end{aligned}
				\right.	
			\end{align}
			Then the estimate of the solution $w_\al$ in \eqref{equ-Ial} can be obtained from Lemma \ref{lem:stri} as follows:
			\begin{lem}
				\label{lem-wave}
				Let $s\geq 0$, $(w_0,|\al\na|^{-1}w_1) \in \dot{H}^s\times \dot{H}^s$, and $w_{\al,\pm}$ be the solution to \eqref{equ-wal_pm}. Then 
\begin{align}
\label{lem:est:w_al(in,2)}
&\|w_{\al,\pm}\|_{L^\infty_t \dot{H}^{s}_x(\R)} \leq C\al^{-s+\frac{1}{2}} \norm{(w_0,|\al\na|^{-1}w_1)}_{\dot{H}^s\times \dot{H}^s},	
		\\
\label{lem:est:w_al(2,in)}
&	\|w_{\al,\pm}\|_{
	L^\frac{2}{1-\varepsilon}_t \dot{W}^{s-1+\varepsilon,\frac{2}{\varepsilon}}_x(\R)} \leq C\al^{-s+\frac{1}{2}} \norm{(w_0,|\al\na|^{-1}w_1)}_{\dot{H}^s\times \dot{H}^s},
		\\
\label{lem:est:U2w_al}
&\||\na|^sP_Nw_{\al,\pm}\|_{\ell^2_NU_{\pm|\na|}^2(\R;L^2_x)}
\leq C\al^{-s+\frac{1}{2}} \norm{(w_0,|\al\na|^{-1}w_1)}_{\dot{H}^s\times \dot{H}^s}.	
\end{align}
\end{lem}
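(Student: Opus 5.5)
The three estimates follow from the free evolution of $w_{\al,\pm}$ together with the scaling identity for $S_\al$, which supplies the factor $\al^{-s+\frac12}$. By \eqref{equ-wal_pm}, $w_{\al,\pm}(t)=e^{\mp it|\na|}w_{\al,\pm}(0)$ with $w_{\al,\pm}(0)=S_\al(w_0\pm i|\al\na|^{-1}w_1)$. The time-independent version of $S_\al$ gives $\|S_\al f\|_{\dot H^s}=\al^{-1}\al^{-s}\al^{3/2}\|f\|_{\dot H^s}=\al^{-s+\frac12}\|f\|_{\dot H^s}$ in $\R^3$, which is consistent with \eqref{rel:Salf-f}. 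Hence
\[
\|w_{\al,\pm}(0)\|_{\dot H^s}\le \al^{-s+\frac12}\norm{(w_0,|\al\na|^{-1}w_1)}_{\dot H^s\times\dot H^s}.
\]
The estimate \eqref{lem:est:w_al(in,2)} then follows because $e^{\mp it|\na|}$ is unitary on $\dot H^s$ for every $t\in\R$.

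For \eqref{lem:est:w_al(2,in)}, apply the homogeneous wave Strichartz estimate \eqref{hom-wave-str} in $d=3$ to $|\na|^sw_{\al,\pm}(0)$. We need $(q,r)=(\frac{2}{1-\varepsilon},\frac2\varepsilon)$ to be sharp $1$-admissible, which holds since $\frac1q+\frac1r=\frac{1-\varepsilon}{2}+\frac\varepsilon2=\frac12$. The regularity index is
\[
\frac1q+\frac3r-\frac32=\frac{1-\varepsilon}{2}+\frac{3\varepsilon}{2}-\frac32=-1+\varepsilon,
\]
so \eqref{hom-wave-str} gives $\|e^{\mp it|\na|}|\na|^s f\|_{L^q_t\dot W^{-1+\varepsilon,r}_x}\lesssim\|f\|_{\dot H^s}$, i.e. $\|w_{\al,\pm}\|_{L^q_t\dot W^{s-1+\varepsilon,r}_x(\R)}\lesssim\|w_{\al,\pm}(0)\|_{\dot H^s}$. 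Combining this with the scaling bound above yields \eqref{lem:est:w_al(2,in)}. The only point to check is that $\varepsilon>0$ keeps us away from the forbidden endpoint $(2,\I)$.

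For \eqref{lem:est:U2w_al}, fix $N$ and note that $P_N$ and $|\na|^s$ commute with the propagator. Lemma \ref{Lem:linearforup} (its proof is translation invariant in time, so it applies on $\R$ or on any interval) gives
\[
\||\na|^sP_Nw_{\al,\pm}\|_{U^2_{\pm|\na|}}\lesssim\|P_N|\na|^sw_{\al,\pm}(0)\|_{L^2}.
\]
Note that $e^{\pm it|\na|}w_{\al,\pm}$ is constant in $t$, so on a bounded-below interval it is a single $U^2$ atom times $\|P_N|\na|^sw_{\al,\pm}(0)\|_{L^2}$. On all of $\R$, one restricts to $[-\I,\I)$ with partition $\{-\I,\I\}$, or uses the standard convention of the restricted spaces; this bookkeeping is the only mildly delicate point. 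Summing in $\ell^2_N$ and using Plancherel (almost orthogonality of the $P_N$) gives $\big(\sum_N\|P_N|\na|^sw_{\al,\pm}(0)\|_{L^2}^2\big)^{1/2}\sim\|w_{\al,\pm}(0)\|_{\dot H^s}$. The scaling bound then finishes the proof. No real obstacle is expected; the main care is in verifying the Strichartz exponent arithmetic.
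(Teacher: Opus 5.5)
Your proposal is correct and is essentially the paper's argument, which likewise just combines the free evolution $w_{\al,\pm}(t)=e^{\mp it|\na|}w_{\al,\pm}(0)$, the scaling identity $\|S_\al f\|_{\dot H^s}=\al^{-s+\frac12}\|f\|_{\dot H^s}$, the wave Strichartz estimate \eqref{hom-wave-str} with the sharp $1$-admissible pair $(\frac{2}{1-\varepsilon},\frac{2}{\varepsilon})$, and Lemma \ref{Lem:linearforup} summed in $\ell^2_N$. Your caveat about the $U^2$ estimate is apt: since $e^{\pm it|\na|}w_{\al,\pm}$ is constant in time, the bound is really a single-atom bound on a bounded-below interval such as $I^\al=[0,\al^2T]$, and that is the only setting in which the paper uses \eqref{lem:est:U2w_al}.
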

\begin{proof}
The proof can be obtained directly by Lemma \ref{Lem:linearforup}, Lemma \ref{lem:stri},  and scaling.

\end{proof}

			\begin{lem}
				\label{lem:part}
				For any  $f(t,x)$ such that $f\in L_t^\I \dot H_x^{-1}$ and $\pd_tf \in L_t^1 \dot H_x^{-1}$, then
				\begin{align}
					\label{part-inter}
					\Big\|\int^{t}_{0}e^{- i(t-s)|\na|}f(s)\ds\Big\|_{L_{t}^\infty L^2_x(\R)}
					&\lesssim\||\na|^{-1}f\|_{L_{t}^\infty L^2_x(\R)}+\||\na|^{-1}\partial_tf\|_{L_{t}^1 L^2_x(\R)}.
				\end{align}
			\end{lem}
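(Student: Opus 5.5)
The plan is to integrate by parts in $s$ inside the Duhamel integral, using that $e^{-i(t-s)|\na|}$ is, up to the factor $i|\na|^{-1}$, an exact $s$-derivative. Concretely, for each fixed $t$,
\[
e^{-i(t-s)|\na|}f(s)=\partial_s\Big(-i|\na|^{-1}e^{-i(t-s)|\na|}\Big)f(s),
\]
since $\partial_s e^{-i(t-s)|\na|}=i|\na|e^{-i(t-s)|\na|}$. Integrating by parts on $[0,t]$ then gives
\[
\int_0^t e^{-i(t-s)|\na|}f(s)\ds
=-i|\na|^{-1}f(t)+i e^{-it|\na|}|\na|^{-1}f(0)+i\int_0^t e^{-i(t-s)|\na|}|\na|^{-1}\partial_s f(s)\ds .
\]

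Each term is then bounded in $L^2_x$ uniformly in $t$, using that $e^{-i\tau|\na|}$ is unitary on $L^2$ for every real $\tau$.
\begin{enumerate}
\item The boundary term at $s=t$ is bounded by $\||\na|^{-1}f\|_{L^\infty_tL^2_x}$.
\item The boundary term at $s=0$ is bounded by $\||\na|^{-1}f(0)\|_{L^2_x}$, which is again at most $\||\na|^{-1}f\|_{L^\infty_tL^2_x}$.
\item For the remaining integral, Minkowski's inequality gives the bound $\int_0^t\||\na|^{-1}\partial_sf(s)\|_{L^2_x}\ds\le\||\na|^{-1}\partial_tf\|_{L^1_tL^2_x}$.
\end{enumerate}
Taking the supremum over $t$ yields \eqref{part-inter}, and the same argument works for negative $t$.

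The only delicate point is justifying the integration by parts at the stated regularity, since the computation needs $|\na|^{-1}f\in C_tL^2_x$ with $|\na|^{-1}\partial_tf\in L^1_tL^2_x$, and $f(0)$ must make sense. I would handle this in three steps.
\begin{enumerate}
\item Work at the Fourier side pointwise in $\xi\neq0$. For a.e.\ $\xi$, the function $s\mapsto |\xi|^{-1}\hat f(s,\xi)$ is absolutely continuous, because its derivative is integrable, so the identity holds frequency by frequency.
\item Alternatively, first prove the identity for $f$ smooth in $t$ and Schwartz in $x$ with Fourier support away from $0$, where the integration by parts is classical.
\item Conclude by density in the norm on the right-hand side. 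Here $\||\na|^{-1}f\|_{L^\infty_tL^2_x}$ controls the value $f(0)$, because $W^{1,1}_t$-type control makes $|\na|^{-1}f$ continuous in time.
\end{enumerate}
I expect this approximation step to be the only (mild) obstacle; the estimate itself is immediate.
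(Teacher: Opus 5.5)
Your proposal is correct and is essentially the paper's argument. The paper likewise integrates by parts in $s$: it does this on the Fourier side after factoring out the unitary $e^{-it|\na|}$, which produces the same two boundary terms and the same integral of $|\na|^{-1}\partial_s f$, and then bounds each piece by unitarity of the half-wave propagator. Your remarks on justifying the integration by parts (absolute continuity of $|\na|^{-1}f$ in time) supply a detail the paper leaves implicit.
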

			
			\begin{proof}
				The proof can be obtained by Lemma \ref{lem:stri} and integration-by-parts. Since $e^{it|\na|}$ is a unitary operator, we only need to consider $\|\int^{t}_{0}e^{is|\na|}f(s,x)\ds\|_{L^\infty_tL^2_x(\R)}$.
				Taking the Fourier transform of $\int^{t}_{0}e^{is|\na|}f(s,x)\ds$, we have 
				\begin{align}
					\label{lem-F1}
					\F \big(\int^{t}_{0} e^{is|\na|}f(s,x)\ds\big)(\xi)
					&=	\int_{0}^{t}
					e^{is|\xi|} 	\hat{f}(s,\xi)\ds.
				\end{align}
				Then using integration-by-parts, we further get that 
				\begin{align*}
					\eqref{lem-F1}
					=-i
					e^{is|\xi|} |\xi|^{-1}	\hat{f}(s,\xi)\Big|_{s=0}^{s=t}
					+i\int_{0}^{t}
					e^{is|\xi|} |\xi|^{-1}	\partial_s\hat{f}(s,\xi)\ds.
				\end{align*}
				We now take the inverse Fourier transform of above identity to obtain
				\begin{align*}
					\int^{t}_{0} e^{is|\na|}f(s,x)\ds
					=&-ie^{is|\na|}|\na|^{-1}f(s,x)\Big|_{s=0}^{s=t}
					+i\int_{0}^{t}e^{is|\na|}|\na|^{-1}\partial_sf(s,x)\ds.
				\end{align*}
				Then using Lemma \ref{lem:stri}, we complete the proof. 
			\end{proof}
			
\subsection{Nonlinear estimates I: the wave remainder}\label{sec:estq}
			
	\begin{lem}
	\label{lem:nonlinear-estimates-psi}
Under the assumptions of Proposition \ref{prop:estPsiPhi}, for any $\al\ges1$,
\begin{align*}
	\big\|\Psi_\al\big\|_{\mW^{\gamma}(I^\al)}
	&\leq   C_1\al^{-\gamma+\frac{1}{2}}\Big\{
			 \al^{-2+\kappa_1}\|\vec{u}\|_{L^\infty_t(I;\HH^{3+\gamma-\kappa_1}) }\big[1+\|(w_0, |\al\na|^{-1}w_1)\|_{H^{1+\gamma}\times H^{1+\gamma}}\big]\\
			 &\qquad\qquad\qquad+
			 \al^{-1-\kappa_2}\|\vec{u}\|_{L^\infty_t(I;\HH^{3+\gamma+\kappa_2})}\Big\},
	\end{align*}
	where $C_1$ is a constant only depending on $\|\vec u_0\|_{\HH^3}$.
\end{lem}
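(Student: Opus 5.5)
We would work directly with the Duhamel representation of \eqref{equ-Psial}. Since $\Psi_\al(0)=0$ and $\p_t\Psi_\al(0)=0$, diagonalizing the wave operator gives
\[
\Psi_\al(t)=\al\int_0^t \frac{\sin((t-s)|\na|)}{|\na|}\,\p_{ss}|u_\al|^2(s)\ds .
\]
We would then integrate by parts in $s$ once. This moves one time derivative onto the kernel, at the cost of boundary terms at $s=t$ and $s=0$. The $s=t$ boundary term vanishes because $\sin 0=0$. The $s=0$ term carries $\p_t|u_\al|^2(0)$, and this is exactly what the choice of $w_1$ removes: the initial velocity of $\Psi_\al$ is zero. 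We are therefore left with
\[
\Psi_\al(t)=-\al\int_0^t \cos((t-s)|\na|)\,\p_s|u_\al|^2(s)\ds,
\]
which we split into $e^{\pm i(t-s)|\na|}$ pieces. From \eqref{equ-u,n_al} we compute
\[
\p_s|u_\al|^2=\im(\bar u_\al\De u_\al),
\]
since the term $\al^{-1}n_\al|u_\al|^2$ is real and drops out. This is a divergence, namely $\tfrac12\na\cdot\im(\bar u_\al\na u_\al)$, so it supplies a derivative that the Strichartz-type bounds can absorb.

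\textbf{First term, rate $\al^{-2+\kappa_1}$.} Here we would apply Lemma \ref{lem:part} to the half-wave Duhamel integral, which trades one inverse derivative for a time integration by parts. This gives
\[
\al\,\||\na|^{\gamma-1}\im(\bar u_\al\De u_\al)\|_{L^\infty_tL^2_x(I^\al)}
+\al\,\||\na|^{\gamma-1}\p_t\im(\bar u_\al\De u_\al)\|_{L^1_tL^2_x(I^\al)} .
\]
For the second piece, $\p_t$ falls on $u_\al$, and the equation converts it into $\De u_\al$ plus $\al^{-1}n_\al u_\al$. We would estimate everything with Kato--Ponce (Lemma \ref{lem:kato-Ponce}) and Sobolev embedding, then rescale via \eqref{rel:Salf-f}. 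The time integral over $I^\al$ has length $\al^2T$. It is compensated by the gain in derivative count: counting $|\na|^{\gamma-1}\De\De$, each extra spatial derivative gives $\al^{-1}$.

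In the resonant regime $|\xi|\gtrsim \al^{-1}$ in original variables (order one after rescaling), we would keep the $L^\infty_t$ boundary term with $3+\gamma-\kappa_1$ derivatives on $u$. The interpolation parameter $\kappa_1$ measures how many of the derivatives we spend. The $\al^{-1}n_\al u_\al$ contribution is where the factor $[1+\|(w_0,|\al\na|^{-1}w_1)\|]$ enters, because $n=q+w-|u|^2$. We would bound it crudely using Proposition \ref{prop:lwpZ}.

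\textbf{Second term, rate $\al^{-1-\kappa_2}$.} This term comes from a non-integrated-by-parts treatment of the region where integrating by parts does not pay off. That is the region of high frequency relative to $\al$, i.e.\ $|\xi|\gtrsim 1$ after rescaling. There we would instead estimate the Duhamel integral directly, bounding the $L^\infty_t$ norm by $\al\|\cdot\|_{L^1_t\dot H^{\gamma}}$ on the frequency-localized piece. Bernstein's inequality (Lemma \ref{lem:Bernstein}) then converts $\kappa_2$ extra derivatives into decay $\al^{-\kappa_2}$, with $-\frac12\le\kappa_2\le1$ handled uniformly by Littlewood--Paley summation (Lemma \ref{lem:Little}).

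\textbf{Main obstacle.} We expect the hardest part to be the intermediate frequencies and the time-length factor $\al^2T$ in the $L^1_t$ norm. A crude Hölder bound in time loses $\al^{2}$. To avoid this we would use the $U^2_{\pm|\na|}$ duality of Lemma \ref{Lem:linearforup} together with the $U^2_\De$ bound of Lemma \ref{lem:estU2-u_al}, placing $u_\al$ in $L^2_tL^6_x$ rather than $L^\infty_t$ so that only a factor $\al T^{1/2}$ is lost. If the exponents do not close, we would accept a less sharp power of $T$, since only the $\al$-dependence matters. All constants would depend on $\|\vec u_0\|_{\HH^3}$ through Proposition \ref{prop:lwpZ}.
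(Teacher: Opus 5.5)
\textbf{The gap: at low rescaled frequencies your scheme only gives the first-order bound.} At rescaled frequencies $\ll1$ (original frequencies $\ll\alpha$), your plan cannot reach the stated rates. By the scaling \eqref{rel:Salf-f}, the two quantities in your ``first term'' are exactly $\alpha^{-\frac12-\gamma}$ times $\alpha$-independent norms of $u$. For example,
\[
\alpha\big\||\nabla|^{\gamma-1}\partial_t\im(\bar u_\alpha\Delta u_\alpha)\big\|_{L^1_tL^2_x(I^\alpha)}=\alpha^{-\frac12-\gamma}\big\||\nabla|^{\gamma-1}\partial_t\im(\bar u\Delta u)\big\|_{L^1_tL^2_x(I)} .
\]
At rescaled frequencies $\ll1$, Bernstein lets you remove derivatives (at the cost of powers of $\alpha$) but not add them. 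So you obtain $\alpha^{\frac12-\gamma}\alpha^{-1}$, not $\alpha^{\frac12-\gamma}\alpha^{-2+\kappa_1}$. That is consistent with the lemma only when $\kappa_2\le0$. It fails for $\kappa_2=1$, which is exactly the case behind \eqref{thm:qconvergencerate2}.

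The reason is structural. Your first integration by parts moves $\partial_s$ from $\partial_{ss}|u_\alpha|^2$ onto the kernel, and Lemma~\ref{lem:part} moves it back. The $s=t$ boundary terms cancel between $e^{\pm i(t-s)|\nabla|}$, and the two $s=0$ terms cancel each other. What remains is the crude bound on the original Duhamel integral. Relatedly, the $s=0$ term $-\alpha\frac{\sin(t|\nabla|)}{|\nabla|}\partial_t|u_\alpha|^2(0)$ does not vanish because $q_1=0$. That condition was already used to write the zero-data Duhamel formula, and this term is of size $\alpha^{-\frac12-\gamma}$.

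\textbf{What is missing: the paper's two low-frequency mechanisms.}

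\emph{Quadratic high--low part} ($N_1\gg N_2$, $N_1\ll1$). Write $u_\alpha=e^{-\frac i2 s\Delta}\rho_\alpha$ and use that the full phase $\phi_1=|\xi|+\frac12|\xi_1|^2-\frac12|\xi-\xi_1|^2\sim|\xi|$ is non-degenerate there. Integrating by parts against $e^{is\phi_1}$ puts $\partial_s$ on the profile, $\partial_s\rho_\alpha=\frac i2\alpha^{-1}e^{\frac i2 s\Delta}(n_\alpha u_\alpha)$. This costs no derivatives and gains $\alpha^{-1}$, giving $\alpha^{-\frac32-\gamma+\kappa_1}$ at regularity $\HH^{3+\gamma-\kappa_1}$. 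For the quadratic term, the paper uses Lemma~\ref{lem:part} only when $N_1\lesssim N_2$. It applies it to the $\Gamma_\alpha$-form with forcing $\alpha|\nabla|^{-1}\partial_{ss}|u_\alpha|^2$, whose boundary term $\alpha|\nabla|^{-2}\partial_{tt}|u_\alpha|^2$ is already $O(\alpha^{-\frac32-\gamma})$.

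\emph{Cubic part.} A crude bound via Proposition~\ref{prop:lwpZ} is again only $O(\alpha^{-\frac12-\gamma})$. The paper replaces $P_{\ll1}n_\alpha$ by $-\alpha|u_\alpha|^2+w_\alpha+\Psi_\alpha$:
\begin{itemize}
\item The first piece goes through Lemma~\ref{lem:part}.
\item The $\Psi_\alpha$ piece produces $CT\|\Psi_\alpha\|_{\mW^\gamma}+CT\alpha^{-\frac12-\gamma}\|\Psi_\alpha\|_{\mW^0}$. This must be absorbed for small $T$, first at $\gamma=0$ and then for general $\gamma$; your plan has no such step.
\item The free-wave piece $P_{\ll1}w_\alpha|P_{\ll1}u_\alpha|^2$ is resonant, so time integration by parts gains nothing, since $\partial_tw_\alpha\sim|\nabla|w_\alpha$.
\end{itemize}
Your $L^2_t$ placement of $u_\alpha$ does not help, since $\|u_\alpha\|_{L^2_tL^\infty_x(I^\alpha)}=\|u\|_{L^2_tL^\infty_x(I)}$. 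The wave Strichartz bound \eqref{lem:est:w_al(2,in)} leaves an $\alpha^{\varepsilon}$ loss, which is incompatible with the case $\kappa_1=0$, $\kappa_2=1$. The sharp $\alpha^{-\frac32-\gamma}$ comes from the transversal Schr\"odinger--wave bilinear estimate, Corollary~\ref{lem:bi-linear-W}. It is used twice after $U^2$--$V^2$ duality and puts an extra factor $N_2N_3$ on the two low-frequency Schr\"odinger factors.

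\textbf{What does work.} Your direct $L^1_t$ estimate of the $\cos$-form at rescaled output frequencies $\gtrsim1$ is sound. Only $2+\gamma$ derivatives fall on $u_\alpha$, so Bernstein gives the $\alpha^{-\frac12-\gamma-\kappa_2}$ term for every $\kappa_2\ge-1$, and the $s=0$ term is harmless there. This is a simpler substitute for the paper's normal form at $N_1\gg1$.
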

			
			Recalling the equation of $\Psi_{\al}$ in \eqref{equ-Psial}, 
			and denoting $\Gamma_\al\coloneqq|\nabla|^{-1}(\partial_t-i|\nabla|)\Psi_{\al}$, we have 
			\begin{align}
				\label{exp:ptPsi}
				\Psi_{\al}=-\im\Gamma_\al,\quad  \partial_t\Psi_{\al}=\re|\na|\Gamma_\al.
			\end{align}
			$\Gamma_\al$ satisfies the equation:
			\begin{align}
				\label{equ-Gamma_al}
				\left\{
				\begin{aligned}
					&\big(\partial_{t}+i|\nabla|\big) \Gamma_{\al}=\al|\nabla|^{-1}\partial_{tt}|u_\al|^2, \\
					&\Gamma_{\al}(0)=-iS_\al\big[q_0+i|\al\nabla|^{-1}q_1\big]=0.
				\end{aligned}
				\right.
			\end{align}
			By \eqref{equ-u,n_al}, we have
			\begin{align}
				\label{equ-u_al}
				2i\partial_{t}u_\al-\De u_\al=-\al^{-1}n_\al u_\al. 
			\end{align}
			Multiplying \eqref{equ-u_al} by $i\bar{u}_\alpha$ and taking the real part, we have
			\begin{align}
				\label{ptu2}
				\partial_{t}|u_\al|^2=-\re (\De u_{\al}i\bar{u}_\alpha)=-\re \nabla\cdot  (\nabla u_{\al} i\bar{u}_\alpha).
			\end{align}
Differentiating  with respect to $t$ yields
\begin{align}
\label{exp:ptu2}
\partial_{tt}|u_\al|^2=-\re \nabla\cdot  (\nabla\partial_t u_{\al} i\bar{u}_\alpha)-\re \nabla\cdot  (\nabla u_{\al} i\partial_t\bar{u}_\alpha).
\end{align}
			Using \eqref{equ-u_al}, we have
			\begin{equation}
				\label{pttu2}
				\begin{aligned}
					\partial_{tt}|u_\al|^2&=-\frac{1}{2}\re \nabla\cdot  (\nabla\De u_{\al} \bar{u}_\alpha)+\frac{1}{2}\re \nabla\cdot  (\De u_\alpha \nabla \bar{u}_{\al} )
					\\
					&\quad +\frac{1}{2}\al^{-1}\re \nabla\cdot  \big[\nabla (n_\al u_{\al}) \bar{u}_\alpha\big]-\frac{1}{2}\al^{-1}\re \nabla\cdot  (n_\al\bar{u}_\alpha \nabla  u_{\al} ).
				\end{aligned}
			\end{equation}
In addition, we rewrite the equation of $\Gamma_\al$ in integral form as
			\begin{subequations}
				\begin{align}	
					\Gamma_\al(t,x)	=
					&-	\frac{1}{2}\al \int_{0}^{t}e^{-i(t-s)|\na|}|\nabla|^{-1}\re \nabla\cdot  (\nabla\De u_{\al} \bar{u}_\alpha)\ds\label{2-1}
					\\
					&+\frac{1}{2}\al \int_{0}^{t}e^{-i(t-s)|\na|}|\nabla|^{-1}\re \nabla\cdot  (\De u_\alpha \nabla \bar{u}_{\al} )\ds
					\label{2-2}\\
					&+\frac{1}{2} \int_{0}^{t}e^{-i(t-s)|\na|}|\nabla|^{-1}\re \nabla\cdot  \big[\nabla (n_\al u_{\al}) \bar{u}_\alpha-n_\al\bar{u}_\alpha \nabla  u_{\al}\big]\ds \label{2-3}.
				\end{align}
			\end{subequations}

\subsubsection{ Quadratic term estimates.} 
We deal with the terms \eqref{2-1} and \eqref{2-2}.
First of all, we focus on  \eqref{2-1}. Since $$\re(\bar{f}g)=\frac{1}{2}\big(\bar{f}g+f\bar{g}\big).$$ In our proof, the term  $\nabla\De u_{\al} \bar{u}_\alpha $ makes no essential difference from $\nabla\De \bar{u}_{\al} u_\alpha. $ To simplify the analysis, we therefore only need to consider the following nonlinear term in our estimation:
\begin{align}
\label{main2-1}
\frac{1}{2}\al \int_{0}^{t}e^{-i(t-s)|\na|}|\nabla|^{-1} \nabla\cdot  (\nabla\De u_{\al} \bar{u}_\alpha)\ds.
\end{align}
Using Littlewood-Paley dyadic projection operator, we have
	\begin{subequations}
\begin{align}\label{N1gesN2}
||\na|^{\gamma}\eqref{main2-1}|\les &
\al \Big|\sum_{N_1\les N_2}\int_{0}^{t}e^{-i(t-s)|\na|}|\na|^{-1+\gamma}\nabla\cdot  (P_{N_1}\nabla\De u_{\al} \,P_{N_2}\bar{u}_\alpha) \ds\Big|
	\\
\label{N1llN2low}
&+	\al \Big|\sum_{N_1\gg N_2:N_1\ll1}\int_{0}^{t}e^{-i(t-s)|\na|}|\na|^{-1+\gamma}\nabla\cdot  (P_{N_1}P_{\ll1}\nabla\De u_{\al}\, P_{N_2}\bar{u}_\alpha) \ds\Big|
\\
\label{N1llN21}
&+\al \Big|\sum_{N_1\gg N_2: N_1\sim1}\int_{0}^{t}e^{-i(t-s)|\na|}|\na|^{-1+\gamma}\nabla\cdot (P_{N_1}P_{\sim1}\nabla\De u_{\al}\, P_{N_2}\bar{u}_\alpha) \ds \Big|
		\\
\label{N1llN2high}
&+\al \Big| \sum_{N_1\gg N_2:N_1\gg1}\int_{0}^{t}e^{-i(t-s)|\na|}|\na|^{-1+\gamma}\nabla\cdot (P_{N_1}P_{\gg1}\nabla\De u_{\al}\, P_{N_2}\bar{u}_\alpha)\ds\Big|.
\end{align}
\end{subequations}
Then we consider \eqref{N1gesN2}, \eqref{N1llN2low}, \eqref{N1llN21}, and \eqref{N1llN2high} case by case.
			
$\bullet$ Estimate for  $\eqref{N1gesN2}$. 
Using Lemma \ref{lem:part}, we have
\begin{subequations}
\begin{align}
\label{N1gesN2-boud}
\|\eqref{N1gesN2}\|_{L^\infty_tL^{2}_x(I^\al)}
&\les
\al\big\|\sum_{N_1\les N_2}|\na|^{-1+\gamma} \big(P_{N_1}|\na|^3 u_{\al} \,P_{N_2}\bar{u}_\alpha  \big)\big\|_{L^\infty_tL^{2}_x(I^\al)}
\\
\label{N1gesN2-inte1}
&\quad
+ \al\big\|\sum_{N_1\les N_2}|\na|^{-1+\gamma} \big(P_{N_1}|\na|^3 \partial_t u_{\al} \,P_{N_2}\bar{u}_\alpha  \big)\big\|_{L^1_tL^{2}_x(I^\al)}
\\
	\label{N1gesN2-inte2}
&\quad
+ \al\big\|\sum_{N_1\les N_2}|\na|^{-1+\gamma} \big(P_{N_1}|\na|^3  u_{\al} \, P_{N_2}\partial_t\bar{u}_\alpha \big)\big\|_{L^1_tL^{2}_x(I^\al)}.
\end{align}
\end{subequations}
Since the treatments of \eqref{N1gesN2-inte1} and \eqref{N1gesN2-inte2}  are completely analogous, we provide the detailed proof only for the former two terms. Using Sobolev's embedding $\dot{W}^{1,\frac{6}{5}}\hookrightarrow L^2$ in $x$, Bernstein's inequality and Lemma \ref{lem:Schur}, we have
\begin{align}
\label{step1ofN1gesN2}
\eqref{N1gesN2-boud}
&\les \al\big\|\sum_{N_1\les N_2}|\na|^{-1+\gamma} \big(P_{N_1}|\na|^3 u_{\al}\, P_{N_2}\bar{u}_\alpha  \big)\big\|_{L^\infty_tL^{2}_x(I^\al)}
\notag\\
&\les \al\big\|\sum_{N_1\les N_2}|\na|^{\gamma} \big(P_{N_1}|\na|^3 u_{\al}\, P_{N_2}\bar{u}_\alpha  \big)\big\|_{L^\infty_tL^{\frac{6}{5}}_x(I^\al)}
	\notag \displaybreak[3] \\
&\les \al\big\|\sum_{N_1\les N_2}\frac{N_1^{\frac{3}{2}}}{N_2^{\frac{3}{2}}} \big\| N_1^{-\frac{3}{2}} P_{N_1}|\na|^3 u_{\al}\, N_2^{\frac{3}{2}+\gamma}P_{N_2}\bar{u}_\alpha \big\|_{L^{\frac{6}{5}}_x}\big\|_{L^\infty_t(I^\al)}	\notag\\
&\les \al\big\|\sum_{N_1\les N_2}\frac{N_1^{\frac{3}{2}}}{N_2^{\frac{3}{2}}} \big\| N_1^{-\frac{3}{2}} P_{N_1}|\na|^3 u_{\al}\big\|_{L^{3}_x}\, \big\|N_2^{\frac{3}{2}+\gamma}P_{N_2}\bar{u}_\alpha \big\|_{L^{2}_x}\big\|_{L^\infty_t(I^\al)}
	\notag\\
&\les \al\big\| N_1^{-\frac{3}{2}} P_{N_1}|\na|^{\frac{7}{2}} u_{\al}\big\|_{L^\infty_tL^{2}_x\ell_{N_1}^2(I^\al)}\big\|N_2^{\frac{3}{2}+\gamma} P_{N_2}\bar{u}_\alpha \big\|_{L^\infty_tL^2_x\ell_{N_2}^2(I^\al)},
 \end{align}
then by Lemma \ref{lem:Little} and \eqref{rel:Salf-f}, we further have
\begin{align}
\label{est:N1gesN2-boud}
\eqref{N1gesN2-boud}
&\les \al\|u_\al\|_{L^\infty_t\dot{F}^{2,2}_{2}(I^\al)}\|u_\al\|_{L^\infty_t\dot{F}^{\frac{3}{2}+\gamma,2}_{2}(I^\al)}	
\notag\\
&\lesssim
\al\|u_\al\|_{L^\infty_t\dot{H}^{2}_x(I^\al)}\|u_\al\|_{L^\infty_t\dot{H}^{\frac{3}{2}+\gamma}_x(I^\al)}
	\notag\\
&\lesssim
\al^{-\frac{3}{2}-\gamma}\|u\|_{L^\infty_t\dot{H}^{2}_x(I)}\|u\|_{L^\infty_t\dot{H}^{\frac{3}{2}+\gamma}_x(I)}.
\end{align}
From the equation of $u_\al$ in  \eqref{equ-u_al} and Sobolev's embedding $\dot{W}^{1,\frac{6}{5}}\hookrightarrow L^2$ in $x$, we have
\begin{subequations}
\begin{align}
\label{N1gesN2-inte1-1}
\eqref{N1gesN2-inte1}
&\les
\al\big\|\sum_{N_1\les N_2}|\na|^{\gamma} \big(P_{N_1}|\na |^5u_{\al}\, P_{N_2}\bar{u}_\alpha  \big)\big\|_{L^1_tL^{\frac{6}{5}}_x(I^\al)}
\\
\label{N1gesN2-inte1-2}
&\quad
+\big\|\sum_{N_1\les N_2}|\na|^{\gamma} \big(P_{N_1}|\na|^3(n_\al u_{\al})\,  P_{N_2}\bar{u}_\alpha  \big)\big\|_{L^1_tL^{\frac{6}{5}}_x(I^\al)}.
\end{align}
\end{subequations}
Using the same argument as above, we obtain
\begin{align}
\label{est:N1gesN2-inte1-1}
\eqref{N1gesN2-inte1-1}
&\les \al\big\|\sum_{N_1\les N_2}\frac{N_1^{\frac{5}{2}}}{N_2^{\frac{5}{2}}}  N_1^{-\frac{5}{2}} \big(P_{N_1}|\na|^5 u_{\al} \,N_2^{\frac{5}{2}+\gamma}P_{N_2}\bar{u}_\alpha  \big)\big\|_{L^1_tL^{\frac{6}{5}}_x(I^\al)}
\notag\\
&\les\al|I^\al|\big\| N_1^{-\frac{5}{2}} P_{N_1}|\na|^{\frac{11}{2}} u_{\al}\big\|_{L^\infty_tL^{2}_x\ell_{N_1}^2(I^\al)}\big\|N_2^{\frac{5}{2}+\gamma}P_{N_2}\bar{u}_\alpha \big\|_{L^\infty_tL^2_x\ell_{N_2}^2(I^\al)}
\notag\\
&\les
\al^{-\frac{3}{2}-\gamma}T\|u\|_{L^\infty_t\dot{H}^{3}_x(I)}\|u\|_{L^\infty_t\dot{H}^{\frac{5}{2}+\gamma}_x(I)},
\end{align}
similarly,
\begin{align}
\label{est:N1gesN2-inte1-2}
\eqref{N1gesN2-inte1-2}
&\les \big\|\sum_{N_1\les N_2}\frac{N_1^{2}}{N_2^{2}}  N_1^{-2} \big(P_{N_1}|\na|^3(n_ {\al}u_\al) \, N_2^{2+\gamma} P_{N_2}\bar{u}_\alpha \big)\big\|_{L^1_tL^{\frac{6}{5}}_x(I^\al)}
\notag\\
&\les|I^\al|\big\| N_1^{-2} P_{N_1}|\na|^{\frac72}(n_\al u_{\al}) \big\|_{L^\infty_tL^{2}_x\ell_{N_1}^2(I^\al)}\big\|N_2^{2+\gamma}P_{N_2}\bar{u}_\alpha \big\|_{L^\infty_tL^2_x\ell_{N_2}^2(I^\al)}
\notag\\
&\les
\al^{-\frac{3}{2}-\gamma}T\|u\|_{L^\infty_t\dot{H}^{2+\gamma}_x(I)}\|(u,n)\|^2_{L^\infty_t(I;H^{\frac{5}{2}}\times H^{\frac{3}{2}})}.
\end{align}
Combining \eqref{est:N1gesN2-boud}, \eqref{est:N1gesN2-inte1-1},
and  \eqref{est:N1gesN2-inte1-2},
we have
\begin{align}
\label{est:I1}
\|\eqref{N1gesN2}\|_{L^\infty_tL^{2}_x(I^{\al})}
\les \al^{-\frac{3}{2}-\gamma}\|u\|_{L^\infty_tH^{\frac{5}{2}+\gamma}_x(I)}\big[\|u\|_{L^\infty_tH^{3}_x(I)}+T\|\vec{u}\|^2_{L^\infty_t(I;\HH^{3})}\big].
\end{align}

	$\bullet$ Estimate for  $\eqref{N1llN2low}$. We have
\begin{align*}
\|\eqref{N1llN2low}\|_{L^\infty_tL^{2}_x(I^{\al})}
&\les \al \Big\|\sum_{N_1\gg N_2:N_1\ll1} \int_{0}^{t} e^{is|\na|}|\na|^{\gamma}P_{N_1}|\na|^3u_\al \, P_{N_2}\bar{u}_{\al}\ds\Big\|_{L^\infty_tL^{2}_x(I^{\al})}.
\end{align*}
Recalling the equation of  $u_\al$ in  \eqref{equ-u_al}, by Duhamel's formula, we have
\begin{align}
\label{dum:v_al}
u_{\al}(s)&=e^{-\frac{i}{2}s\De}u_{\al}(0)+\al^{-1}\frac{i}{2}\int_{0}^{s}	e^{-\frac{i}{2}(s-y)\De} n_\al u_\al(y) \dy
\notag\\
&\coloneqq e^{-\frac{i}{2}s\De}\rho_{\al}(s,x)
\end{align} 
and 
\begin{align}
	\label{psrho}
\partial_s\rho_{\al}(s,x)=\al^{-1}\frac{i}{2}e^{\frac{i}{2}s\De}n_\al u_\al(s).
\end{align}
Taking the Fourier transform, we have that
\begin{align}
\label{foueier2-1}
&\quad\F \Big(\sum_{N_1\gg N_2:N_1\ll1} \int_{0}^{t} e^{is|\na|}|\na|^{\gamma}P_{N_1}|\na|^3u_\al P_{N_2}\bar{u}_{\al}\ds\Big)	(\xi)
\notag\\
&= \sum_{N_1\gg N_2:N_1\ll1} \int_{0}^{t}\int e^{is\phi_1}|\xi|^{\gamma}|\xi_1|^3\big(\psi_{N_1}\widehat{P_{\ll1}\rho_\al}\big)(\xi_1)\big(\psi_{N_2}\hat{\bar{\rho}}_{\al}\big)(\xi-\xi_1)\dxio\ds,
\end{align}
where $\phi_1$ is the phase function denoted by $|\xi|+\frac{|\xi_1|^2}{2}-\frac{|\xi-\xi_1|^2}{2}.$
We observe that 
$$\phi_1\sim |\xi|\sim |\xi_1|,\mbox{ when }1\gg|\xi_1|\sim|\xi|\gg|\xi-\xi_1|.$$  
Noting that \begin{align*} e^{is\phi_1}=\frac{1}{i\phi_1}\frac{d}{ds}\big(e^{is\phi_1}\big), \end{align*} then using integration-by-parts, we further get that \begin{equation} \begin{aligned} &\eqref{foueier2-1} \notag\\ =& \sum_{N_1\gg N_2:N_1\ll1}\Big[e^{is\phi_1}\int \frac{|\xi|^{\gamma}|\xi_1|^{1-\gamma}}{i\phi_1}|\xi_1|^{2+\gamma}\big(\psi_{N_1}\widehat{P_{\ll1}\rho_\al}\big)(\xi_1)\big(\psi_{N_2}\hat{\bar{\rho}}_{\al}\big)(\xi-\xi_1) \dxio\Big] \Big|_{s=0}^{s=t} \\ & -\sum_{N_1\gg N_2:N_1\ll1} \int_{0}^{t}\int e^{is\phi_1}\frac{|\xi|^{\gamma}|\xi_1|^{1-\gamma}}{i\phi_1} |\xi_1|^{2+\gamma}\big(\psi_{N_1}\partial_s\widehat{P_{\ll1}\rho_\al}\big)(\xi_1) \big(\psi_{N_2}\hat{\bar{\rho}}_{\al}\big)(\xi-\xi_1)\dxio\ds \\ & -\sum_{N_1\gg N_2:N_1\ll1} \int_{0}^{t}\int e^{is\phi_1}\frac{|\xi|^{\gamma}|\xi_1|^{1-\gamma}}{i\phi_1}\big(\psi_{N_1}\widehat{P_{\ll1}\rho_\al}\big)(\xi_1) \big(\psi_{N_2}\partial_s\hat{\bar{\rho}}_{\al}\big)(\xi-\xi_1)\dxio\ds. 
\end{aligned} 
\end{equation}
Denote the bi-linear multiplier  $T_{m_1}(f,g)(x)$ by 
\begin{align}
\label{T_m_3}
\F\big(T_{m_1}(f,g)\big)(\xi)\coloneqq \sum_{N_1\gg N_2:N_1\ll1}  
\int\frac{|\xi|^{\gamma}|\xi_1|^{1-\gamma}}{i\phi_1}
\psi_{N_1}(\xi_1)\psi_{N_2}(\xi-\xi_1)  \hat{f}(\xi_1)\hat{g}(\xi-\xi_1)\dxio
\end{align}
with the symbol
\begin{align}
\label{def:m_1}
m_1(\xi_1,\xi-\xi_1)= \sum_{N_1\gg N_2:N_1\ll1} \frac{|\xi|^{\gamma}|\xi_1|^{1-\gamma}}{i\phi_1}	\psi_{N_1}(\xi_1)\psi_{N_2}(\xi-\xi_1).  
\end{align}
Note that for any $|\xi_1|\sim N_1$ and $|\xi-\xi_1|\sim N_2$ with $N_1\gg N_2$ and $N_1\ll 1$,
\EQ{
\absb{\frac{|\xi|^{\gamma}|\xi_1|^{1-\gamma}}{i\phi_1}} \lesssim \frac{|\xi|}{|\phi_1|} \lesssim 1.}
Then we have
\begin{align*}
|m_1(\xi_1,\xi-\xi_1)|\les \sum_{N_1\gg N_2:N_1\ll1}  |\psi_{N_1}(\xi_1)\psi_{N_2}(\xi-\xi_1)|\les \psi(\xi_1)\psi(\xi-\xi_1)\les 1,
\end{align*} 
and $	m_1(\xi_1,\xi_2)$ satisfies the condition in Lemma \ref{lem:Coif}.
Moreover, by $e^{is\phi_1} = e^{is|\xi|} e^{\frac{i}{2}s|\xi_1|^2} e^{-\frac{i}{2}s|\xi-\xi_1|^2},$ and \eqref{dum:v_al}, it allows us to write 
\begin{subequations}
\begin{align}	
&\al\sum_{N_1\gg N_2:N_1\ll1} \int_{0}^{t} e^{is|\na|}|\na|^{\gamma}P_{N_1}|\na|^3u_\al P_{N_2}\bar{u}_{\al}\ds\notag\\
=&\al\Big[e^{is|\na|} T_{m_1}\big(P_{\ll1}|\na|^{2+\gamma}u_{\al}, \bar{u}_{\al}\big)(s,x)\Big] \Big|_{s=0}^{s=t}
	\label{F2-1intparts1b}		\\
\label{F2-1intparts1-in-1}	&-\int_{0}^{t} e^{is|\na|}T_{m_1}\big(P_{\ll1}|\na|^{2+\gamma}(n_\al u_{\al}), \bar{u}_{\al}\big)(s,x)\ds
\\
\label{F2-1intparts1-in-2}	&-\int_{0}^{t} e^{is|\na|}T_{m_1}\big(P_{\ll1}|\na|^{2+\gamma} u_{\al},n_\al\bar{u}_{\al}\big)(s,x)\ds.
\end{align}
\end{subequations}
Using Lemma \ref{lem:stri}, we have
\begin{subequations}
\begin{align}
\label{est:I2-b}	\|\eqref{N1llN2low}\|_{L^{\infty}_tL^2_x(I^\al)}
\les
&\al \big\|T_{m_1}\big(P_{\ll1}|\na|^{2+\gamma}u_{\al}, \bar{u}_{\al}\big)\big\|_{L^{\infty}_tL^2_x(I^\al)}\\
\label{est:I2-in1}	&+\big\|T_{m_1}\big(P_{\ll1}|\na|^{2+\gamma}(n_\al u_{\al}), \bar{u}_{\al}\big)\big\|_{L^{1}_tL^2_x(I^\al)}
\\
\label{est:I2-in2}
&+\big\|T_{m_1}\big(P_{\ll1}|\na|^{2+\gamma}u_{\al},n_\al\bar{u}_{\al}\big)\big\|_{L^{1}_tL^2_x(I^\al)}.
\end{align}
\end{subequations}
Using Lemma \ref{lem:Coif}, Sobolev's embedding $H^{\frac{3}{2}+\varepsilon}\hookrightarrow L^\infty$ in $x,$ and \eqref{rel:Salf-f}, for the boundary term $\eqref{est:I2-b}$, we have 
\begin{align}
\label{est-I2-b}
\eqref{est:I2-b}
\les
&\al \big\||\na|^{2+\gamma}u_\al\big\|_{L^{\infty}_tL^{2}_x(I^\al)}\big\| u_\al\big\|_{L^{\infty}_{t,x}(I^\al)}
	\notag
\\
\leq &C\al^{-\frac{3}{2}-\gamma}\|u\|_{L^{\infty}_tH^{2+\gamma}_x(I)}\| u\big\|_{L^{\infty}_tH^{\frac{3}{2}+\varepsilon}_x(I)}.
\end{align}
From Lemma \ref{lem:Bernstein}, we see that low frequencies lower the derivative, for $\kappa_1 \in [0,\frac{1}{2}]$, 
\begin{align*}
\eqref{est:I2-in1}+\eqref{est:I2-in2}
\lesssim
&\big\|T_{m_1}\big(|\na|^{2+\gamma-\kappa_1}(n_\al u_{\al}), \bar{u}_{\al}\big)\big\|_{L^{1}_tL^2_x(I^\al)}
\notag
\\
&+\big\|T_{m_1}\big(|\na|^{2+\gamma-\kappa_1}u_{\al},n_\al\bar{u}_{\al}\big)\big\|_{L^{1}_tL^2_x(I^\al)},
\end{align*}
By Lemma \ref{lem:Coif}, Sobolev's embedding $H^{2}\hookrightarrow L^\infty$ in $x,$ and \eqref{rel:Salf-f}, we have
\begin{align}
\label{est-I2-in}
\eqref{est:I2-in1}+\eqref{est:I2-in2}
\les
&|I^\al|\big\||\na|^{2+\gamma-\kappa_1}n_\al\big\|_{L^{\infty}_tL^{2}_x(I^\al)}\big\|u_\al\big\|^2_{L^{\infty}_{t,x}(I^\al)}
\notag\\
&+|I^\al|\big\||\na|^{2+\gamma-\kappa_1}u_\al\big\|_{L^{\infty}_tL^{2}_x(I^\al)}\big\|n_\al\big\|_{L^{\infty}_{t,x}(I^\al)}\big\| u_\al \big\|_{L^{\infty}_{t,x}(I^\al)}\notag\\
\leq & C\al^{-\frac{3}{2}-\gamma+\kappa_1}T\|(u,n)\|_{L^\infty_t(I;H^{2+\gamma-\kappa_1}\times H^{2+\gamma-\kappa_1})}\|(u,n)\|^2_{L^\infty_t(I;H^{2}\times H^{2})}.
\end{align}
Combining \eqref{est-I2-b} and \eqref{est-I2-in}, for $\kappa_1 \in [0,\frac{1}{2}]$ and $\alpha\ges1$, we obtain that
\begin{align}
	\label{est:I2}
\|\eqref{N1llN2low}\|_{L^\infty_tL^{2}_x(I^\al)}
\leq & C\al^{-\frac{3}{2}-\gamma+\kappa_1}\|\vec{u}\|_{L^\infty_t(I;\HH^{3+\gamma-\kappa_1}) }\big[\|\vec{u}\|_{L^\infty_t(I;\HH^2)}
+T\|\vec{u}\|^2_{L^\infty_t(I;\HH^{3}) }\big].
\end{align}

$\bullet$ Estimate for  $\eqref{N1llN21}$.
Using Lemma \ref{lem:Bernstein}, \ref{lem:stri}, and H\"{o}lder's inequality, for $\kappa_2\in[-\frac{1}{2},1]$,
\begin{align}
\label{est:I31}
\|\eqref{N1llN21}\|_{L^{\infty}_tL^{2}_x(I^\al)}
&\les \al \big\||\na|^{\gamma}P_{\sim1}\big(|\na|^3P_{\sim1}u_{\al} \bar{u}_{\al}\big)\big\|_{L^{1}_tL^2_x(I^\al)}
\notag\\
&\les \al |I^\al|\big\||\na|^{3+\gamma}P_{\sim1}u_{\al}\big\|_{L^{\infty}_tL^2_x(I^\al)} \big\|\bar{u}_{\al}\big\|_{L^{\infty}_{t,x}(I^\al)}\notag\\
&\les \al^{-\frac{1}{2}-\gamma-\kappa_2}T\|\vec{u}\|_{L^{\infty}_t(I;\HH^{3+\gamma+\kappa_2})}\|\vec{u}\|_{L^{\infty}_t(I;\HH^{2})}.
\end{align}

$\bullet$ Estimate for  $\eqref{N1llN2high}$.
Similarly as \eqref{N1llN2low}, we have
\begin{align*}
\|\eqref{N1llN2high}\|_{L^\infty_tL^{2}_x(I^{\al})}
&\les \al \left\|\sum_{N_1\gg N_2:N_1\gg1} \int_{0}^{t} e^{is|\na|}|\na|^{\gamma}P_{N_1}P_{\gg1}|\na|^3u_\al P_{N_2}\bar{u}_{\al}\ds\right\|_{L^\infty_tL^{2}_x(I^{\al})}.
\end{align*}
 Taking the Fourier transform, we have 
\begin{equation}
\label{foueier2-4}
\begin{aligned}
	& \F  \Big(\sum_{N_1\gg N_2:N_1\gg1} \int_{0}^{t} e^{is|\na|}|\na|^{\gamma}P_{N_1}P_{\gg1}|\na|^3u_\al P_{N_2}\bar{u}_{\al}\ds\Big)	(\xi) \\
	= &  \sum_{N_1\gg N_2:N_1\gg1} \int_{0}^{t}\int e^{is\phi_1}|\xi|^{\gamma}|\xi_1|^3\psi_{N_1}(\xi_1)\psi_{N_2}(\xi-\xi_1)
\widehat{(P_{\gg1}\rho_\al)}(\xi_1)\hat{\bar{\rho}}_{\al}(\xi-\xi_1)\dxio\ds.
\end{aligned}
\end{equation}
It is worth noting that when $1\ll|\xi_1|\sim|\xi|, $ the phase function $\phi_1=|\xi|+\frac{|\xi_1|^2}{2}-\frac{|\xi-\xi_1|^2}{2}\sim |\xi_1|^2.$
Similarly to \eqref{N1llN2low}, using integration-by-parts together with Lemma \ref{lem:Coif}, we have
\begin{equation}
	\label{F2-4intparts1}
	\begin{aligned}
	&\sum_{N_1\gg N_2:N_1\gg1}\alpha\int_{0}^{t} e^{is|\na|}|\na|^{\gamma}P_{N_1}P_{\gg1}|\na|^3u_\al P_{N_2}\bar{u}_{\al}\ds
	\\
	=&\al \Big[e^{is|\na|} T_{m_2}(|\na|^{1+\gamma}P_{\gg1}u_{\al}, \bar{u}_{\al})(s,x)\Big]\Big|_{s=0}^{s=t}
	 \\
	&-\int_{0}^{t} e^{is|\na|}T_{m_2}(|\na|^{1+\gamma}P_{\gg1}\big(n_\al u_{\al}\big), \bar{u}_{\al})(s,x)\ds
	\\
	&-\int_{0}^{t} e^{is|\na|}T_{m_2}\big(|\na|^{1+\gamma}P_{\gg1} u_{\al},n_\al\bar{u}_{\al}\big)(s,x)\ds,
	\end{aligned}
\end{equation}
where $T_{m_2}(f,g)$ is a bilinear operator whose symbol $m_2$ is given by
\begin{align}
\label{def:m_2}
m_2(\xi_1,\xi-\xi_1)=\sum_{N_1\gg N_2:N_1\gg1}\frac{|\xi|^{\gamma}|\xi_1|^2}{i|\xi_1|^{\gamma}\phi_1}\psi_{N_1}(\xi_1)\psi_{N_2}(\xi-\xi_1).
\end{align}
This symbol satisfies the condition required by Lemma \ref{lem:Coif}.
Then, using high-frequency components can enhance the derivative in Lemma \ref{lem:Bernstein}, for $\kappa_1\in[0,\frac{1}{2}]$, we have 
\begin{subequations}
	\begin{align}
\label{F2-4intparts1b}
\|\eqref{F2-4intparts1}\|_{L^\infty_tL^2_x(I^\al)}
\les &\al \big\| T_{m_2}\big(|\na|^{2+\gamma}u_{\al}, \bar{u}_{\al}\big)(t,x)\big\|_{L^\infty_tL^2_x(I^\al)}
	\\
\label{F2-4intparts1-in-1}
&+\big\|T_{m_2}\big(|\na|^{2+\gamma-\kappa_1}(n_\al u_{\al}), \bar{u}_{\al}\big)(t,x)\big\|_{L^1_tL^2_x(I^\al)}
		\\
\label{F2-4intparts1-in-2}
&+\big\|T_{m_2}\big(|\na|^{2+\gamma-\kappa_1}u_{\al},n_\al\bar{u}_{\al}\big)(t,x)\big\|_{L^1_tL^2_x(I^\al)},
\end{align}
\end{subequations}
where \eqref{F2-4intparts1b} enjoys the same estimates as \eqref{est-I2-b}, and \eqref{F2-4intparts1-in-1}--\eqref{F2-4intparts1-in-2} enjoy the same estimates as \eqref{est-I2-in}. Combining \eqref{est:I1}, \eqref{est:I2}, and \eqref{est:I31}, for $\alpha\ges1$, $\kappa_1\in[0,\frac{1}{2}]$ and $\kappa_2\in[-\frac{1}{2},1]$, 
\begin{align}
\label{est:Psi2}
&
\al\big\| \int_{0}^{t}e^{-i(t-s)|\na|}|\nabla|^{-1}\re \nabla\cdot  (\nabla\De u_{\al} \bar{u}_\alpha)\ds\big\|_{\mW^{\gamma}(I^\al)}
\notag\\
\leq & C \al^{-\frac{3}{2}-\gamma}\|u\|_{L^\infty_tH^{\frac{5}{2}+\gamma}_x(I)}\big[\|u\|_{L^\infty_tH^{3}_x(I)}+T\|\vec{u}\|^2_{L^\infty_t(I;\HH^{3})}\big]
\notag\\
&+C\al^{-\frac{3}{2}-\gamma+\kappa_1}\|\vec{u}\|_{L^\infty_t(I;\HH^{3+\gamma-\kappa_1}) }\big[\|\vec{u}\|_{L^\infty_t(I;\HH^2)}
+T\|\vec{u}\|^2_{L^\infty_t(I;\HH^{3}) }\big]
\notag\\
&+C\al^{-\frac{1}{2}-\gamma-\kappa_2}T\|\vec{u}\|_{L^\infty_t(I;\HH^{3+\gamma+\kappa_2})}\|\vec{u}\|_{L^\infty_t(I;\HH^{2})}
\notag\\
\leq & C(\|\vec{u}_0\|_{\HH^3})\big[\al^{-\frac{3}{2}-\gamma+\kappa_1}\|\vec{u}\|_{L^\infty_t(I;\HH^{3+\gamma-\kappa_1})}+\al^{-\frac{1}{2}-\gamma-\kappa_2}\|\vec{u}\|_{L^\infty_t(I;\HH^{3+\gamma+\kappa_2})}\big],
\end{align} 
 here we used  Lemma \ref{lem:local-theory} in the last step.		
						
Clearly, the estimate for \eqref{2-2} is simpler and follows similarly to \eqref{2-1}. For brevity, we only sketch the key steps here. Using the Littlewood-Paley dyadic projection operator, we have
\begin{subequations}
\begin{align}
&\quad\|\eqref{2-2}\|_{\mW^{\gamma}(I^\al)}\notag\\
&\les \al\Big\| \sum_{N_1\les N_2}\int_{0}^{t}e^{-i(t-s)|\na|}|\na|^{\gamma}\big(P_{N_1}\De u_\alpha P_{N_2} \nabla \bar{u}_{\al}\big) \ds\Big\|_{L^\infty_tL^2_x(I^\al)}
\label{2N1gesN2}\\
&\quad+\al\Big\| \sum_{N_1\gg N_2:N_1\ll1}\int_{0}^{t}e^{-i(t-s)|\na|}|\na|^{\gamma}\big(P_{N_1}P_{\ll1}\De u_\alpha P_{N_2} \nabla \bar{u}_{\al}\big) \ds\Big\|_{L^\infty_tL^2_x(I^\al)}
\label{2N1llN2low}
\\
\label{2N1llN21}
	&\quad+\al \Big\| \sum_{N_1\gg N_2:N_1\ges 1}\int_{0}^{t}e^{-i(t-s)|\na|}|\na|^{\gamma}\big(P_{N_1}P_{\ges 1}\De u_\alpha P_{N_2} \nabla \bar{u}_{\al}\big) \ds\Big\|_{L^\infty_tL^2_x(I^\al)}.
\end{align}
\end{subequations}
The estimate for $\eqref{2N1gesN2}$ follows analogously to that for $\eqref{N1gesN2}$, yielding a result identical to \eqref{est:I1}, we therefore omit the details. Proceeding similarly as \eqref{N1llN2low} to the estimate for $\eqref{2N1llN2low}$, we obtain the following estimates, which are subsequently absorbed into \eqref{est:Psi2}.
\begin{align*}
\eqref{2N1llN2low}
\lesssim&
\al\big\|T_{m_1}\big(P_{\ll1}|\na|^{1+\gamma}u_{\al},\na \bar{u}_{\al} \big)\big\|_{L^\infty_tL^2_x(I^\al)}
\\
&+\big\|T_{m_1}\big(P_{\ll1}|\na|^{1+\gamma}(n_\al u_{\al}),\na \bar{u}_{\al}\big)\big\|_{L^1_tL^2_x(I^\al)}
\\
&+\big\|T_{m_1}\big(P_{\ll1}|\na|^{1+\gamma}u_{\al}, \na(n_\al \bar{u}_{\al})\big)\big\|_{L^1_tL^2_x(I^\al)}
\\
\les& \al^{-\frac{3}{2}-\gamma}\|\vec{u}\|_{L^\infty_t(I;\HH^{\frac{5}{2}+\gamma} )}\big(\|\vec{u}\|_{L^\infty_t(I;\HH^2)}+T\|\vec{u}\|^2_{L^\infty_t(I;\HH^2)}\big),
\end{align*}
where $m_1$ is given in \eqref{def:m_1}.
By Lemma \ref{lem:Bernstein}, \ref{lem:Little}, and \eqref{rel:Salf-f}, for $\kappa_1\in [0,\frac{1}{2}]$,
\begin{align*}
\eqref{2N1llN21}
\lesssim&
\al\Big\| \big\| \sum_{N_1:N_1\ges 1}|\na|^{\gamma} P_{\sim N_1}\big(P_{N_1}\De u_\alpha  P_{\ll N_1}\nabla \bar{u}_{\al}\big)\big\|_{L^2_x} \Big\|_{L^1_t(I^\al)}
\\
\lesssim&
\al\Big\| \sum_{N_1:N_1\ges 1}\big\|N_1^{\gamma}P_{N_1}\De u_\alpha  \big\|_{L^2_x}\big\| \na \bar{u}_{\al}\big\|_{L^\infty_{x}} \Big\|_{L^1_t(I^\al)}
\\
\lesssim&
\al\Big\| \sum_{N_1:N_1\ges 1}N_1^{-1+\kappa_1}\big\|N_1^{1+\gamma-\kappa_1}P_{N_1}\De u_\alpha  \big\|_{L^2_x}\big\| \na \bar{u}_{\al}\big\|_{L^\infty_{x}} \Big\|_{L^1_t(I^\al)}
\\
\lesssim&
\al \Big\| \big\|N_1^{1+\gamma-\kappa_1}P_{N_1}\De u_\alpha  \big\|_{l^2_{N_1}L^2_x}\big\| \na \bar{u}_{\al}\big\|_{L^\infty_{x}} \Big\|_{L^1_t(I^\al)}
\\
\lesssim&
\al^{-\frac{3}{2}-\gamma+\kappa_1}T\big\|\vec{u}\big\|_{L^\infty_t(I;\HH^{3+\gamma-\kappa_1})}\big\|\vec{u}\big\|_{L^\infty_t(I;\HH^{3})},
\end{align*}
which can also be contained in \eqref{est:Psi2}.

\subsubsection{ Cubic term estimates.} 

We now turn to the treatment of \eqref{2-3}, which can be divided into the following three parts:  
\begin{subequations}
\begin{align}
\|\eqref{2-3}\|_{\mW^{\gamma}(I^\al)}\les 
\label{4-1}
&\Big\|\int_{0}^t e^{-i(t-s)|\na|}|\na|^{1+\gamma}
\big(P_{\ges1}n_\al \,|u_\al|^2\big)\ds\Big\|_{L^\infty_tL^2_x(I^\al)}
\\
\label{4-2}
&+\Big\|\int_{0}^t e^{-i(t-s)|\na|}
 |\na|^{1+\gamma}
\big[n_\al \big(P_{\ges1}u_\al\,\bar{u}_\al+u_\al\, P_{\ges1}\bar{u}_\al\big) \big]\ds\Big\|_{L^\infty_tL^2_x(I^\al)}
\\
\label{4-3}
&+\Big\|\int_{0}^t e^{-i(t-s)|\na|}|\na|^{1+\gamma}\big(P_{\ll1}n_{\al}\,|P_{\ll1}u_\al|^2\big)\ds\|_{L^\infty_tL^2_x(I^\al)}.
\end{align}
\end{subequations}
Due to the presence of high-frequency parts in the first two terms, using  Lemma \ref{lem:Bernstein}, Lemma \ref{lem:kato-Ponce}, \eqref{rel:Salf-f}, Sobolev's embedding $\dot{H}^{\kappa_1}\hookrightarrow L^{\frac{6}{3-2\kappa_1}}$, $H^{\frac{3}{2}-\kappa_1+\varepsilon}\hookrightarrow L^{\frac{3}{\kappa_1}}$, $\dot{H}^{1-\kappa_1}\hookrightarrow L^{\frac{6}{1+2\kappa_1}}$, and $\dot{H}^{\frac{1}{2}+\kappa_1}\hookrightarrow L^{\frac{3}{1-\kappa_1}}$ in $x$ for $\kappa_1\in [0,\frac{1}{2}]$, we have
\begin{align}
\label{est:4-1+4-2}
\eqref{4-1}+\eqref{4-2}
&\les |I^\al|\norm{|\na|^{2+\gamma-\kappa_1}P_{\ges1}n_\al}_{L^\infty_tL^2_x(I^\al)}\norm{u_\al}^2_{L^\infty_{t,x}(I^\al)}
\notag\\
&\quad+|I^\al|\norm{|\na|^{1-\kappa_1}P_{\ges1}n_\al}_{L^\infty_tL^\frac{6}{3-2\kappa_1}_x(I^\al)}\norm{|\na|^{1+\gamma}u_\al}_{L^\infty_tL^\frac{3}{\kappa_1}_x(I^\al)}\norm{u_\al}_{L^\infty_{t,x}(I^\al)}
\notag\\
&\quad+ |I^\al|\norm{|\na|^{2+\gamma-\kappa_1}P_{\ges1}u_\al}_{L^\infty_tL^6_x(I^\al)}\norm{n_\al}_{L^\infty_tL^6_x(I^\al)}\norm{u_\al}_{L^\infty_tL^6_x(I^\al)}
\notag\\
&\quad+|I^\al|\norm{|\na|^{1+\gamma}n_\al}_{L^\infty_tL^\frac{6}{1+2\kappa_1}_x(I^\al)}\norm{|\na|^{1-\kappa_1}P_{\ges1}u_\al}_{L^\infty_tL^{\frac{3}{1-\kappa_1}}_x(I^\al)}\norm{u_\al}_{L^\infty_{t,x}(I^\al)}
\notag\\
&\les \al^{-\frac{3}{2}-\gamma+\kappa_1}T\|(u,n)\|_{L^\infty_t(I;H^{3+\gamma-\kappa_1}\times H^{2+\gamma-\kappa_1})}\|(u,n)\|^2_{L^\infty_t(I;H^{2}\times H^{1})}
\notag\\
&\les \al^{-\frac{3}{2}-\gamma+\kappa_1}T\|\vec{u}\|_{L^\infty_t(I;\HH^{3+\gamma-\kappa_1})}\|\vec{u}\|^2_{L^\infty_t(I;\HH^{2})},
\end{align}
which can be absorbed into \eqref{est:Psi2} by Lemma \ref{lem:local-theory}. Using the expression of 
$n_\al=-\al|u_\al|^2+w_\al+\Psi_\al$,
 we can further decompose 
\eqref{4-3} into the following three terms:
\begin{subequations}
\begin{align}
\eqref{4-3}\les 
\label{5-1}
&\al \Big\|\int_{0}^t e^{-i(t-s)|\na|}|\na|^{1+\gamma}
\big(P_{\ll1}|u_\al|^2\,|P_{\ll1}u_\al|^2\big)\ds\Big\|_{L^\infty_tL^2_x(I^\al)}
\\
\label{5-2}
&+\Big\|\int_{0}^t e^{-i(t-s)|\na|}
 |\na|^{1+\gamma}
\big(P_{\ll1}w_\al\, |P_{\ll1}u_\al|^2\big) \ds\Big\|_{L^\infty_tL^2_x(I^\al)}
\\
\label{5-3}
&+\Big\|\int_{0}^t e^{-i(t-s)|\na|}|\na|^{1+\gamma}\big(P_{\ll1}\Psi_{\al}\,|P_{\ll1}u_\al|^2\big)\ds\|_{L^\infty_tL^2_x(I^\al)}.
\end{align}
\end{subequations}
By Lemma \ref{lem:part},  \eqref{rel:Salf-f}, and $\partial_t |u_\al|^2=-\re(\Delta u_\al i\bar{u}_\alpha)$ in \eqref{ptu2}, we have
\begin{align}
\label{est:5-1}
\eqref{5-1}
\lesssim
&\al\norm{|\na|^{\gamma}|u_\al|^4}_{L^\infty_tL^2_x(I^\al)}+\al\norm{|\na|^{\gamma}\big(\partial_t |u_\al|^2 |u_\al|^2\big)}_{L^1_t L^2_x(I^\al)}
\notag\\
\les&\al\norm{|\na|^{\gamma}|u_\al|^4}_{L^\infty_tL^2_x(I^\al)}
+\al\norm{|\na|^{\gamma}\big(\De u_\al |u_\al|^3\big)}_{L^1_t L^2_x(I^\al)}
\notag\\
\les&
\al^{-\frac{3}{2}-\gamma}(1+T)\|u\|_{L^\infty_tH^{2+\gamma}_x(I)}\|u\|^3_{L^\infty_tH^{2}_x(I)},
\end{align}
We note that \eqref{est:5-1} can be absorbed into \eqref{est:Psi2} by Lemma \ref{lem:local-theory}. By Lemma \ref{lem:Bernstein}, we have
\begin{align}
\label{est:5-3}
\eqref{5-3}
&\les 
|I^\al|\norm{|\na|^{1+\gamma}P_{\ll1}\Psi_{\al}}_{L^\infty_tL^2_x(I^\alpha)}
\norm{u_{\al}}^2_{L^\infty_{t,x}(I^\alpha)}
\notag\\
&\quad
+|I^\al|\norm{P_{\ll1}\Psi_{\al}}_{L^\infty_tL^3_x(I^\alpha)}
\norm{|\na|^{1+\gamma}u_{\al}}_{L^\infty_tL^6_x(I^\alpha)}\norm{u_{\al}}_{L^\infty_{t,x}(I^\alpha)}
\notag\\
&\les T\|\Psi_\al\|_{\mW^{
\gamma}(I^\alpha)}\|u\|^2_{L^\infty_tH^{2}_x(I)} +\al^{-\frac{1}{2}-\gamma}T\|\Psi_\al\|_{\mW^{
0}(I^\alpha)}\|u\|_{L^\infty_tH^{2+\gamma}_x(I)}\|u\|_{L^\infty_tH^{2}_x(I)}.
\end{align}
Now, only the estimate for \eqref{5-2} remains to be established.
Using the Littlewood-Paley square function estimate, it suffices to
estimate the output dyadic pieces in
\(\ell_N^2L_t^\infty L_x^2\). By symmetry, we may assume $N_3\leq N_2$, thus we have
\begin{subequations}
\begin{align}
&\quad\eqref{5-2}\notag\\
&\lesssim
\Big\|
\sum_{\substack{N_1:1\gg N_1\gtrsim N}}
\sum_{\substack{N_3\leq N_2\\ N_2\ll1}}
\frac{N^{1+\gamma}}{N_1^{1+\gamma}}
\big\|
\int_0^t e^{is|\nabla|}P_N
\big(
N_1^{1+\gamma}P_{N_1}w_\alpha\,
P_{N_2}u_\alpha\,
P_{N_3}\bar u_\alpha
\big)\,ds
\big\|_{L_t^\infty L_x^2(I^\alpha)}
\Big\|_{\ell_N^2}
\label{5-3-1}
\\
&\quad+
\Big\|
\sum_{\substack{N_1:N_1\ll N\ll1}}
\sum_{\substack{N_3\leq N_2\\ N\les N_2\ll1}}
\frac{N^{1+\gamma}}{N_2^{1+\gamma}}
\big\|
\int_0^t e^{is|\nabla|}P_N
\big(
P_{N_1}w_\alpha\,
N_2^{1+\gamma}P_{N_2}u_\alpha\,
P_{N_3}\bar u_\alpha
\big)\,ds
\big\|_{L_t^\infty L_x^2(I^\alpha)}
\Big\|_{\ell_N^2}.
\label{5-3-2}
\end{align}
\end{subequations}
The argument for \eqref{5-3-2} is similar to that for \eqref{5-3-1};
we therefore omit it and present details only for \eqref{5-3-1}.
Since the two half-wave components \(w_{\alpha,+}\) and \(w_{\alpha,-}\)
are treated in the same way, we only consider \(w_{\alpha,+}\).
By the embedding
\(V^2(\mathbb R;L^2)\hookrightarrow L_t^\infty L_x^2(\mathbb R)\),
Lemma \ref{Lem:linearforup}, and \(\ell^2\)-duality in the output
frequency \(N\), we have
\begin{align*}
&\quad\eqref{5-3-1}\\
&\lesssim
\Big\|
\sum_{\substack{N_1:1\gg N_1\gtrsim N}}
\sum_{\substack{N_3\leq N_2\\ N_2\ll1}}
\frac{N^{1+\gamma}}{N_1^{1+\gamma}}
\big\|
\int_0^t e^{is|\nabla|}P_N
\big(
N_1^{1+\gamma}P_{N_1}w_{\alpha,+}\,
P_{N_2}u_\alpha\,
P_{N_3}\bar u_\alpha
\big)\,ds
\big\|_{L_t^\infty L_x^2(I^\alpha)}
\Big\|_{\ell_N^2}
\\
&\lesssim
\Big\|
\sum_{\substack{N_1:1\gg N_1\gtrsim N}}
\sum_{\substack{N_3\leq N_2\\ N_2\ll1}}
\frac{N^{1+\gamma}}{N_1^{1+\gamma}}
\big\|
\int_0^t e^{-i(t-s)|\nabla|}P_N
\big(
N_1^{1+\gamma}P_{N_1}w_{\alpha,+}\,
P_{N_2}u_\alpha\,
P_{N_3}\bar u_\alpha
\big)\,ds
\big\|_{V^2_{+|\nabla|}(I^\alpha;L_x^2)}
\Big\|_{\ell_N^2}
\\
&\lesssim
\sup_{\|P_N g\|_{\ell_N^2 U_{+|\nabla|}^2(I^\alpha;L_x^2)}=1}
\sum_{\substack{N_1\gtrsim N \\ N_1\ll1}}
\sum_{\substack{N_3\leq N_2\\ N_2\ll1}}
\frac{N^{1+\gamma}}{N_1^{1+\gamma}}
\Big|\int_{I^\alpha}\int N_1^{1+\gamma}P_{N_1}w_{\alpha,+}\,
P_{N_2}u_\alpha\,
P_{N_3}\bar u_\alpha\,
P_N\bar{g}
\,dx\,dt
\Big|,
\end{align*}
where $\|P_N g\|_{\ell_N^2 U_{+|\nabla|}^2(I^\alpha;L_x^2)}:=
\Big(
\sum_{N\ll1}
\|P_N g\|_{U_{+|\nabla|}^2(I^\alpha;L_x^2)}^2
\Big)^{1/2}$. By H\"older's inequality, we have 
\begin{align*}
	&\quad \left|\int_{I^\al}\int N_1^{1+\gamma}P_{N_1}w_{\al,+}\,
			P_{N_2}u_\al\, P_{N_3}\bar{u}_\al \,P_N\bar{g}\dx\dt\right|\notag\\
	&\lesssim
	\big\|N_1^{1+\gamma}P_{N_1}w_{\al,+}\,
	P_{N_2}u_\al\big\|_{L^2_{t,x}(I^\al)} 	\big\|P_{N_3}u_\al \,P_{N}g\big\|_{L^2_{t,x}(I^\al)}.
\end{align*}
From Corollary \ref{lem:bi-linear-W}, for $N_1,N_2,N_3\ll1$, we have
\begin{align*}
	\|N_1^{1+\gamma}P_{N_1}w_{\al,+}\,P_{N_2}u_\al\|_{L^2_{t,x}(I^\al)} &\les \min\{N_1,N_2\}
	\|N_1^{1+\gamma}P_{N_1}w_{\al,+}\|_{U_{+|\na|}^2(I^\al;L^2_x)}\|P_{N_2}u_\al\|_{U_{\De}^2(I^\al;L^2_x)}
	\notag\\
	&\les 
		\|N_1^{1+\gamma}P_{N_1}w_{\al,+}\|_{U_{+|\na|}^2(I^\al;L^2_x)}\|N_2P_{N_2}u_\al\|_{U_{\De}^2(I^\al;L^2_x)},
		\\
	\|P_{N_3}u_\al\, P_{N}g\|_{L^2_{t,x}(I^\al)} &\les \|N_3P_{N_3}u_\al\|_{U_{\De}^2(I^\al;L^2_x)}\|P_{N}g\|_{U_{+|\na|}^2(I^\al;L^2_x)}.
\end{align*}
Combining the above estimates, 
\begin{align*}
\eqref{5-3-1}&\les 
\sum_{N_3\leq N_2:N_2\ll1 }\|N_2P_{N_2}u_\al\|_{U_{\De}^2(I^\al;L^2_x)}\|N_3P_{N_3}u_\al\|_{U_{\De}^2(I^\al;L^2_x)}
\\
&\quad\quad\cdot \sup_{\|P_Ng\|_{\ell^2_N U_{+|\na|}^2(I^\al;L^2_x)}=1}\sum_{\substack{N_1\gtrsim N \\ N_1\ll1}}\frac{N^{1+\gamma}}{N_1^{1+\gamma}}
\|P_{N_1}|\na|^{1+\gamma}w_{\al,+}\|_{U_{+|\na|}^2(I^\al;L^2_x)}\|P_{N}g\|_{U_{+|\na|}^2(I^\al;L^2_x)}.
\end{align*}
Using Lemma \ref{lem:Schur} and \eqref{lem:est:U2w_al}, we have
\begin{align*}
	&\quad\sup_{\|P_Ng\|_{\ell^2_N U_{+|\na|}^2(I^\al;L^2_x)}=1}\sum_{\substack{N_1\gtrsim N \\ N_1\ll1}}\frac{N^{1+\gamma}}{N_1^{1+\gamma}}
	\|P_{N_1}|\na|^{1+\gamma}w_{\al,+}\|_{U_{+|\na|}^2(I^\al;L^2_x)}\|P_{N}g\|_{U_{+|\na|}^2(I^\al;L^2_x)}
	\notag\\
	&\les \sup_{\|P_Ng\|_{\ell^2_N U_{+|\na|}^2(I^\al;L^2_x)}=1}
	\||\na|^{1+\gamma}P_{N}w_{\al,+}\|_{\ell^2_NU_{+|\na|}^2(I^\al;L^2_x)}\|P_{N}g\|_{\ell^2_NU_{+|\na|}^2(I^\al;L^2_x)}\\
	&\les\al^{-\gamma-\frac{1}{2}}\|(w_0, |\al\na|^{-1}w_1)\|_{H^{1+\gamma}\times H^{1+\gamma}}. 
\end{align*}
Using Lemma \ref{lem:Schur}, Bernstein's inequality, and Lemma  \ref{lem:estU2-u_al}, we have
\begin{align*}
	&\quad \sum_{N_3\leq N_2:N_2\ll1}N_3^{\varepsilon}N_2^{-\varepsilon}\||\na|^{1+\varepsilon}P_{N_2}u_\al\|_{U_{\De}^2(I^\al;L^2_x)}\||\na|^{1-\varepsilon}P_{N_3}u_\al\|_{U_{\De}^2(I^\al;L^2_x)}
\notag\\
	&\les  \||\na|^{1+\varepsilon}P_{N_2}u_\al\|_{\ell^2_{N_2}U_{\De}^2(I^\al;L^2_x)}
	\||\na|^{1-\varepsilon}P_{N_3}u_\al\|_{\ell^2_{N_3}U_{\De}^2(I^\al;L^2_x)}\\
	&\les \al^{-1}\|(u,n)\|_{L^\infty_t(I;H^{1+\varepsilon}_x\times H^{1+\varepsilon})}\big[1+T\|(u,n)\|_{L^\infty_t(I;H^{2}_x\times H^{2}_x)}\big]\notag\\
	&\qquad \cdot\|(u,n)\|_{L^\infty_t(I;H^{1-\varepsilon}_x\times H^{1-\varepsilon}_x)}\big[1+T\|(u,n)\|_{L^\infty_t(I;H^{2}_x\times H^{2}_x)}\big]
	\notag\\
  &\les \al^{-1}\|\vec{u}\|^2_{L^\infty_t(I;\HH^{2+\varepsilon})}\big[1+T^2\|\vec{u}\|^2_{L^\infty_t(I;\HH^{3})}\big].
\end{align*}
Then we further obtain
\begin{align}
	\label{est:5-3-1}
		\eqref{5-3-1}&\les \al^{-\frac{3}{2}-\gamma}\|(w_0, |\al\na|^{-1}w_1)\|_{H^{1+\gamma}\times H^{1+\gamma}}\|\vec{u}\|^2_{L^\infty_t(I;\HH^{2+\varepsilon})}\big[1+T^2\|\vec{u}\|^2_{L^\infty_t(I;\HH^{3})}\big],
		\\
		\label{est:5-3-2}
	\eqref{5-3-2}&\les \al^{-\frac{3}{2}-\gamma}\|(w_0, |\al\na|^{-1}w_1)\|_{H^{1}\times H^{1}}\|\vec{u}\|_{L^\infty_t(I;\HH^{2+\gamma+\varepsilon})}\|\vec{u}\|_{L^\infty_t(I;\HH^{2})}\big[1+T^2\|\vec{u}\|^2_{L^\infty_t(I;\HH^{3})}\big].
\end{align}
  Combining \eqref{est:4-1+4-2}, \eqref{est:5-1}, \eqref{est:5-3}, \eqref{est:5-3-1}, \eqref{est:5-3-2}, and Lemma \ref{lem:local-theory}, for $\alpha\ges1$ and $\kappa_1\in[0,\frac{1}{2}]$, we obtain the estimate of cubic term \eqref{2-3} as follows
  \begin{equation}
  \label{est:Psi3}
\begin{aligned}
    &\quad\big\|\int_{0}^{t}e^{-i(t-s)|\na|}|\nabla|^{-1}\re \nabla\cdot  \big[\nabla (n_\al u_{\al})\, \bar{u}_\alpha-n_\al\bar{u}_\alpha \nabla  u_{\al}\big]\ds\big\|_{\mW^{\gamma}(I^\al)}
\\
     \leq& 
   \al^{-\frac{3}{2}-\gamma}(1+T)\|\vec{u}\|_{L^\infty_t(I;\HH^{2+\gamma})}\|u\|^3_{L^\infty_t(I;\HH^{2})}
+T\al^{-\frac{3}{2}-\gamma+\kappa_1}\|\vec{u}\|_{L^\infty_t(I;\HH^{3+\gamma-\kappa_1})}\|\vec{u}\|^2_{L^\infty_t(I;\HH^{2})}
\\
    &+T\|\Psi_\al\|_{\mW^{
    \gamma}(I^\alpha)}\|u\|^2_{L^\infty_tH^{2}_x(I)} +\al^{-\frac{1}{2}-\gamma}T\|\Psi_\al\|_{\mW^{
    0}(I^\alpha)}\|u\|_{L^\infty_tH^{2+\gamma}_x(I)}\|u\|_{L^\infty_tH^{2}_x(I)}
\\
    &+
    \al^{-\frac{3}{2}-\gamma}\|(w_0, |\al\na|^{-1}w_1)\|_{H^{1+\gamma}\times H^{1+\gamma}}\big[1+T^2\|\vec{u}\|^2_{L^\infty_t(I;\HH^{3})}\big]
 \\
 &\qquad\qquad \cdot \big[\|\vec{u}\|^2_{L^\infty_t(I;\HH^{2+\varepsilon})}+\|\vec{u}\|_{L^\infty_t(I;\HH^{2+\gamma+\varepsilon})}\|\vec{u}\|_{L^\infty_t(I;\HH^{2})}\big]
\\
         \leq& C(\|\vec u_0\|_{\HH^3})\al^{-\frac{3}{2}-\gamma+\kappa_1}\|\vec{u}\|_{L^\infty_t(I;\HH^{3+\gamma-\kappa_1})}\big[1+\|(w_0, |\al\na|^{-1}w_1)\|_{H^{1+\gamma}\times H^{1+\gamma}}\big]
         \\
         &+ C(\|\vec u_0\|_{\HH^3})T\big[\|\Psi_\al\|_{\mW^{\gamma}(I^\alpha)}+\al^{-\frac{1}{2}-\gamma}\|\Psi_\al\|_{\mW^{
             0}(I^\alpha)}\big].
    \end{aligned}
  \end{equation} 
 Combining \eqref{est:Psi2}, \eqref{est:Psi3}, and Lemma \ref{lem:local-theory}, for $\al\ges1$,$\gamma\in[0,\frac{1}{2}]$,  $\kappa_1\in[0,\frac{1}{2}]$, and $\kappa_2\in[-\frac{1}{2},1]$,
 \begin{equation}
\label{est:Psigamma}
\begin{aligned}
 \big\|\Gamma_\al\big\|_{\mW^{\gamma}(I^\al)}
	&\leq  
 	C\al^{-\frac{3}{2}-\gamma+\kappa_1}\|\vec{u}\|_{L^\infty_t(I;\HH^{3+\gamma-\kappa_1})}\big[1+\|(w_0, |\al\na|^{-1}w_1)\|_{H^{1+\gamma}\times H^{1+\gamma}}\big]\\
 	&\quad+C\al^{-\frac{1}{2}-\gamma-\kappa_2}\|\vec{u}\|_{L^\infty_t(I;\HH^{3+\gamma+\kappa_2})}
 	\\
     &\quad+C\al^{-\frac{1}{2}-\gamma}T\|\Psi_\al\|_{\mW^{
     0}(I^\alpha)}+CT\|\Psi_\al\|_{\mW^{\gamma}(I^\alpha)},
 \end{aligned}
 \end{equation}where $C$ is a constant only depending on $\|\vec u_0\|_{\HH^3}$.
 
 With the above estimates, we can now provide the proof of Lemma \ref{lem:nonlinear-estimates-psi}.
 \begin{proof}[Proof of Lemma \ref{lem:nonlinear-estimates-psi}]
 First, we choose $T$ suitably such that $C(\|\vec u_0\|_{\HH^3})T \leq \frac{1}{4}$, by \eqref{est:Psigamma} and  $\Psi_\al=-\im \Gamma_\al$ in \eqref{exp:ptPsi}, we obtain 
   \begin{equation}
  \label{est:Psigamma1}
  \begin{aligned}
   \big\|\Gamma_\al\big\|_{\mW^{\gamma}(I^\al)}
   	&\leq 
C(\|\vec u_0\|_{\HH^3})\al^{-\frac{3}{2}-\gamma+\kappa_1}\|\vec{u}\|_{L^\infty_t(I;\HH^{3+\gamma-\kappa_1}) }\big[1+\|(w_0, |\al\na|^{-1}w_1)\|_{H^{1+\gamma}\times H^{1+\gamma}}\big]\\
   	&\quad+C(\|\vec u_0\|_{\HH^3})\al^{-\frac{1}{2}-\gamma-\kappa_2}\|\vec{u}\|_{L^\infty_t(I;\HH^{3+\gamma+\kappa_2})}
 +C(\|\vec u_0\|_{\HH^3})T\al^{-\frac{1}{2}-\gamma}\|\Gamma_\al\|_{\mW^{
       0}(I^\alpha)}.
   \end{aligned}
   \end{equation}
  Let $\gamma=0$ in \eqref{est:Psigamma1}. Then we have
 \begin{equation}
 \label{est-Psi0}
 \begin{aligned}
\big\|\Gamma_\al\big\|_{\mW^{0}(I^\al)}
	&\leq  
C(\|\vec u_0\|_{\HH^3})\al^{-\frac{3}{2}+\kappa_1}\|\vec{u}\|_{L^\infty_t(I;\HH^{3-\kappa_1}) }\big[1+\|(w_0, |\al\na|^{-1}w_1)\|_{H^{1}\times H^{1}}\big]\\
  &\quad+C(\|\vec u_0\|_{\HH^3})\al^{-\frac{1}{2}-\kappa_2}\|\vec{u}\|_{L^\infty_t(I;\HH^{3+\kappa_2})}
+C(\|\vec u_0\|_{\HH^3})\al^{-\frac{1}{2}}T\|\Gamma_\al\|_{\mW^{0}(I^\alpha)}.
  \end{aligned}
  \end{equation}
  Choosing suitable $T$ such that for any $\al\ges1 $, $C(\|\vec u_0\|_{\HH^3})\al^{-\frac{1}{2}}T\leq C(\|\vec u_0\|_{\HH^3})T \leq\frac{1}{2},$ then we have
\begin{equation}
 \label{est:Psi0}
 \begin{aligned}
  \big\|\Gamma_\al\big\|_{\mW^{0}(I^\al)}
  	&\leq  
  	2C(\|\vec u_0\|_{\HH^3})\al^{-\frac{3}{2}+\kappa_1}\|\vec{u}\|_{L^\infty_t(I;\HH^{3-\kappa_1}) }\big[1+\|(w_0, |\al\na|^{-1}w_1)\|_{H^{1}\times H^{1}}\big]\\
  &\quad+2C(\|\vec u_0\|_{\HH^3})
\al^{-\frac{1}{2}-\kappa_2}\|\vec{u}\|_{L^\infty_t(I;\HH^{3+\kappa_2})},
  \end{aligned}
  \end{equation}
  then inserting \eqref{est:Psi0} into \eqref{est:Psigamma1},
  \begin{align*}
 \big\|\Gamma_\al\big\|_{\mW^{\gamma}(I^\al)}
    	&\leq 
    	C(\|\vec u_0\|_{\HH^3})\al^{-\gamma-\frac{3}{2}+\kappa_1}\big\|\vec{u}\|_{L^\infty_t(I;\HH^{3+\gamma-\kappa_1}) }\big[1+\|(w_0, |\al\na|^{-1}w_1)\|_{H^{1+\gamma}\times H^{1+\gamma}}\big]
    	\\
    	&\quad
    	+C(\|\vec u_0\|_{\HH^3})\al^{-\gamma-\frac{1}{2}-\kappa_2}\big[\|\vec{u}\|_{L^\infty_t(I;\HH^{3+\gamma+\kappa_2})}+\|\vec{u}\|_{L^\infty_t(I;\HH^{3+\kappa_2})}\big].
  \end{align*}
 Finally, by Lemma \ref{lem:local-theory}, we complete the proof of Lemma \ref{lem:nonlinear-estimates-psi}.
 \end{proof}

\subsection{Nonlinear estimates II: the Schr\"odinger remainder}\label{sec:estr}
			In the present subsection, we focus on deriving an estimate for 
			$\Phi_\al.$ The main result is as follows:
			
			\begin{lem}[Nonlinear estimate for $\Phi_\al$]
				\label{lem:nonlinear-estimates-phi}
				Let the assumptions in Proposition  \ref{prop:estPsiPhi} hold.  Then 
\begin{align*}
\left\|\Phi_{\al}\right\|_{\mS^{\gamma}(I^\al)} 
&\leq 
C_2 \|(w_0,|\al\na|^{-1}w_1)\|_{H^{1+\gamma}\times H^{1+\gamma}}\big[\al^{-\gamma-\frac{1}{2}}+\al^{-\gamma-1+\frac{\varepsilon}{2}}\left\|v \right\|_{X^{2+\gamma}(I)}\big]
					\notag\\
&\quad +C_2\big\|\big(\Phi_{\al},\Psi_\al\big)\big\|_{\mS^{\gamma}\times \mW^{\gamma}(I^{\al})}\Big(\|\Phi_{\al}\|^2_{\mS^{\frac{1}{2}}(I^\al)}
+T^{\frac{1}{2}}\big\|\big(\Phi_{\al},\Psi_\al\big)\big\|_{\mS^{\frac{1}{2}}\times \mW^{\frac{1}{2}}(I^{\al})}+T^{\frac{1+2\gamma}{4}}\Big),
\end{align*}
where  $C_2$ is the same as that in Proposition \ref{prop:estPsiPhi}.
			\end{lem}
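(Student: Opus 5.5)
I will write the equation \eqref{equ-Phial} in Duhamel form with zero initial data,
\[
\Phi_\al(t)=-\tfrac{i}{2}\int_0^t e^{-\frac i2(t-s)\De}\Big[|u_\al|^2u_\al-|v_\al|^2v_\al-\al^{-1}(w_\al+\Psi_\al)(v_\al+\Phi_\al)\Big](s)\ds ,
\]
and estimate the $\mS^\gamma(I^\al)$ norm term by term. I split the nonlinearity into four groups.

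\textbf{Cubic difference.} The first group is the cubic difference $|u_\al|^2u_\al-|v_\al|^2v_\al$, which is a sum of products of the form $\Phi_\al\cdot(u_\al\text{ or }v_\al)^2$ and $\Phi_\al^2\cdot(\cdot)$ and $\Phi_\al^3$. These are handled by the inhomogeneous Strichartz estimate \eqref{inhom-Nls-str} in the dual space $L^2_t\dot W^{\gamma,6/5}_x$, or $L^1_tL^2_x$ when a time factor is needed. Lemma \ref{lem:kato-Ponce} puts $|\na|^\gamma$ on a single factor, and Sobolev embedding applies $\dot H^{1/2}\hookrightarrow L^3$ and $\dot W^{1/2,6}\hookrightarrow L^{\infty}$ only loosely. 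The purely cubic $\Phi_\al^3$ piece gives $\|\Phi_\al\|_{\mS^\gamma}\|\Phi_\al\|_{\mS^{1/2}}^2$. The pieces containing $v_\al$ give a factor $T^{(1+2\gamma)/4}$ after Hölder in time on $I^\al$ combined with the scaling \eqref{lem:est-v_al}: the $\al$-powers produced by scaling cancel exactly against $|I^\al|=\al^2T$ at the critical $\dot H^{1/2}$ level, leaving a positive power of $T$. In the same way, $\Psi_\al\Phi_\al$ and $\Psi_\al v_\al$-type products of $\Phi_\al$ with $\al^{-1}\Psi_\al$ produce $T^{1/2}\|(\Phi_\al,\Psi_\al)\|_{\mS^{1/2}\times\mW^{1/2}}$, and $\al^{-1}w_\al\Phi_\al$ is bounded using Lemma \ref{lem-wave}. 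All of these are absorbed into the last line of the claimed estimate.

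\textbf{Term $\al^{-1}\Psi_\al v_\al$.} For this term I estimate in $L^1_t\dot H^\gamma_x$ with the $L^\infty_x$ norm of $v_\al$. This yields $T^{\cdot}\|\Psi_\al\|_{\mW^\gamma}$ times a $C_2$ constant, so it also belongs to the absorbable part.

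\textbf{Main term $\al^{-1}\int_0^te^{-\frac i2(t-s)\De}w_\al v_\al\ds$.} This term is the real obstacle, since it is linear in the data $w$. Here I use the decomposition \eqref{deco:w_al} and treat $w_{\al,-}$, the other sign being symmetric. I also write $v_\al=e^{-\frac i2 s\De}\rho$, where $\partial_s\rho=e^{\frac i2 s\De}|v_\al|^2v_\al$. On the Fourier side the phase is $\phi=\phi_1+\phi_2+\phi_3$, as in \eqref{intro:fourier}. I split the frequencies dyadically into the regions (a) $|\xi_1|\lesssim 1$ with the wave low and (b) $|\xi_1|\gg$ the Schrödinger frequency.

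In region (a) I use no normal form. I apply the Strichartz estimate with the wave in $L^{2/(1-\varepsilon)}_tL^{2/\varepsilon}_x$ from \eqref{lem:est:w_al(2,in)}, together with the Bernstein gain at low frequency. This gives the $\al^{-\gamma-1/2}\|(w_0,|\al\na|^{-1}w_1)\|_{H^{1+\gamma}}$ contribution.

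In the remaining region I perform the splitting normal form: I write $e^{is\phi}=(i\phi_2)^{-1}\partial_s(e^{is\phi_2})e^{is\phi_1}e^{is\phi_3}$ and integrate by parts in $s$. This produces the following terms.

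\begin{enumerate}
\item A boundary term of the form $\al^{-1}|\na|^{-1}w_{\al}\cdot v_\al$, estimated in $L^\infty_tL^2$ via Lemma \ref{lem:Coif}.
\item A term in which $\partial_s$ falls on $e^{is\phi_1}$ and $\hat v$. This is $\al^{-1}\int e^{-\frac i2(t-s)\De}\big(\De(|\na|^{-1}w_\al\, v_\al)-|\na|^{-1}w_\al\,\De v_\al\big)$, together with the nonlinear piece $|\na|^{-1}w_\al\,|v_\al|^2v_\al$.
\end{enumerate}

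The commutator structure leaves at most two derivatives on $v_\al$ and one inverse derivative on $w_\al$. By the scaling \eqref{lem:est-v_al}, each derivative on $v_\al$ costs $\al^{-1}$, and the Strichartz pairing with $w_\al$ in $L^{2/(1-\varepsilon)}_tL^{2/\varepsilon}$ then gives $\al^{-\gamma-1+\varepsilon/2}\|w\|\,\|v\|_{X^{2+\gamma}(I)}$. The $\varepsilon/2$ loss arises from the nonendpoint wave Strichartz estimate.

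I expect the delicate points to be two. The first is checking that the symbol $|\xi_1|^{-1}$ times the cutoffs satisfies the Coifman–Meyer bounds of Lemma \ref{lem:Coif} in this region. The second is that the time integration over $I^\al$ does not destroy the gain, which is why Strichartz in time rather than $L^1_t$ must be used for the linear-in-$w$ pieces.
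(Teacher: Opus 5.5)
Your overall architecture is the paper's. You write Duhamel with zero data and use Strichartz/Kato--Ponce bounds with positive powers of $T$ for the cubic difference and for $\al^{-1}w_\al\Phi_\al$, $\al^{-1}\Psi_\al v_\al$, $\al^{-1}\Psi_\al\Phi_\al$. On $\al^{-1}w_\al v_\al$ you apply the splitting normal form $e^{is\phi}=(i\phi_2)^{-1}\partial_s(e^{is\phi_2})e^{is\phi_1}e^{is\phi_3}$, with the wave in $L^{2/(1-\varepsilon)}_tL^{2/\varepsilon}_x$, producing $\al^{-\gamma-1+\varepsilon/2}\|v\|_{X^{2+\gamma}(I)}$.

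\textbf{The gap is your region (a)}, where you do \emph{not} integrate by parts in time. In the rescaled variables $\widehat{w_\al}(\xi_1)=\al^2\widehat{w}(\al\xi_1)$, so $w_\al$ (like $v_\al$) is concentrated at $|\xi_1|\sim\al^{-1}$. The region $|\xi_1|\les 1$ therefore contains essentially all of $w_\al$: it is the main term, not a benign piece. Without the normal form, the only $\al$-decay available is the wave Strichartz gain, and every other factor is scale-critical. For instance, with $\gamma=0$ and \eqref{lem:est:w_al(2,in)} at $s=1-\varepsilon$,
\[
\al^{-1}\|P_{\les1}w_\al\, v_\al\|_{L^1_tL^2_x(I^\al)}\le \al^{-1}\|w_\al\|_{L^{\frac{2}{1-\varepsilon}}_tL^{\frac{2}{\varepsilon}}_x}\|v_\al\|_{L^{\frac{2}{1+\varepsilon}}_tL^{\frac{2}{1-\varepsilon}}_x(I^\al)}\les \al^{-1}\cdot\al^{-\frac12+\varepsilon}\cdot\al^{\frac32-\frac{\varepsilon}{2}}=\al^{\frac{\varepsilon}{2}}.
\]
In general you get only $\al^{-\gamma+\varepsilon/2}$ instead of the required $\al^{-\gamma-\frac12}$. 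This is exactly the $\al^{-1/2+}$ obstruction (in unscaled variables) that the introduction describes.

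The ``Bernstein gain at low frequency'' cannot rescue this. On $P_{\les1}$, Bernstein only bounds a norm with more derivatives by one with fewer. Gaining powers of $\al^{-1}$ requires charging \emph{more} derivatives to $w_\al$ (each is worth $\al^{-1}$ by \eqref{lem:est:w_al(in,2)}), which is the reverse inequality and fails at low frequency. So the claimed $\al^{-\gamma-\frac12}$ contribution from region (a) is not justified.

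\textbf{The repair} is to drop the frequency splitting and perform the splitting normal form on all of $\al^{-1}w_{\al,\pm}v_\al$, as the paper does. The factor $|\xi_1|^{-1}$ is then just the linear multiplier $|\na|^{-1}$ on $w_{\al,\pm}$, so no Coifman--Meyer check is needed. It is also harmless at low frequency in $\R^3$:
\begin{itemize}
\item \emph{Boundary term.} It is controlled by $\al^{-1}\||\na|^{\gamma-1}w_\al\|_{L^\infty_tL^8_x}\les\al^{-1}\|w_\al\|_{L^\infty_t\dot H^{\gamma+1/8}}$ (via $\dot H^{9/8}\hookrightarrow L^8$) times $v_\al$ in $L^\infty_tL^{8/3}_x\cap L^2_tL^{24}_x$ (both $\dot H^{3/8}$-admissible). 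Add the analogous term with $|\na|^\gamma$ on $v_\al$; together these give $\al^{-\gamma-\frac12}$.
\item \emph{Integral terms.} $\al^{-1}\De(|\na|^{-1}w_\al v_\al)$ and $\al^{-1}|\na|^{-1}w_\al\,\partial_s v_\al$ are controlled by $\||\na|^{1+\gamma}w_\al\|_{L^\infty_tL^2_x}\|v_\al\|_{L^3_tL^\infty_x}$. They also use \eqref{lem:est:w_al(2,in)} with $s=0$ and $s=\gamma$ (i.e.\ $|\na|^{-1+\varepsilon}w_\al$ and $|\na|^{-1+\gamma+\varepsilon}w_\al$ in $L^{2/(1-\varepsilon)}_tL^{2/\varepsilon}_x$) and \eqref{est:ptval} for $\partial_s v_\al$.
\end{itemize}

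Two smaller points. First, the boundary term must also be bounded in the $L^2_tL^6_x$ component of $\mS^\gamma$, not only in $L^\infty_tL^2_x$. Second, $\dot W^{1/2,6}\hookrightarrow L^\infty$ fails in $\R^3$, so the cubic terms need non-endpoint pairs. The paper uses the $L^{4/3}_tL^{3/2}_x$ dual with $\Phi_\al,v_\al\in L^{8/3}_tL^{12}_x$ and $\Phi_\al\in L^8_tL^{12/(5-4\gamma)}_x$.
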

			
	\begin{proof}
		Now, we prove the above Lemma. Recall the equation for $\Phi_\al$ in \eqref{equ-Phial} and $\Phi_\alpha(0)=S_\alpha r_0=0$ under Assumption \ref{assu:main}, then 
			\begin{subequations}
				\begin{align}
					\label{I1-1}
					\Phi_\al(t,x)= &-\frac{i}{2}\int_{0}^{t}	e^{-\frac{i}{2}(t-s)\De} \big(|u_{\al}|^2u_{\al}-|v_{\al}|^2v_{\al}\big)(s) \ds \\
					\label{I1-2}
					&+\frac{i}{2}\al^{-1}\int_{0}^{t}	e^{-\frac{i}{2}(t-s)\De} w_{\al}v_{\al}(s)  \ds \\
					\label{I1-3}
					&+\frac{i}{2}\al^{-1}\int_{0}^{t}	e^{-\frac{i}{2}(t-s)\De} w_{\al}\Phi_{\al}(s)  \ds \\
					\label{I1-4}
					&+\frac{i}{2}\al^{-1}\int_{0}^{t}	e^{-\frac{i}{2}(t-s)\De} \Psi_{\al}v_{\al}(s)  \ds \\	
					\label{I1-5}
					&+\frac{i}{2}\al^{-1}\int_{0}^{t}	e^{-\frac{i}{2}(t-s)\De} \Psi_{\al}\Phi_{\al} (s) \ds.
				\end{align}
			\end{subequations}
			Next, we will consider each term individually.
			
			\textit{Estimate for \eqref{I1-1}.} By Lemma \ref{lem:stri}, we have
			\begin{align}
		\label{est-Phi-cubic1}
	\|\eqref{I1-1}\|_{\mathcal{S}^{\gamma}(I^{\al})}
		&\les \||\na|^{\gamma}\big[\Phi_{\al}\big(|\Phi_{\al}|^2+|v_{\al}|^2\big)\big]\|_{L^{\frac{4}{3}}_{t}L^{\frac{3}{2}}_x(I^{\al})}.
			\end{align}
			For $\gamma\in[0,\frac{1}{2}]$, we note that  $(\infty,\frac{6}{1+2\gamma})$ is $\dot{H}^{1-\gamma}$-admissible, $(8, \frac{12}{5-4\gamma})$ is $\dot{H}^{\gamma}$-admissible, and $(\frac{8}{3}, 12)$ is $\dot{H}^{\frac{1}{2}}$-admissible. By Lemma \ref{lem:stri}, \ref{lem:kato-Ponce}, and \eqref{rel:Salf-f}, we have
			\begin{align}
				\label{est-Phi-cubic term1}
			\eqref{est-Phi-cubic1}
				&\les \||\na|^{\gamma}\big[\Phi_{\al}\big(|\Phi_{\al}|^2+|v_{\al}|^2\big)\big]\|_{L^{\frac{4}{3}}_{t}L^{\frac{3}{2}}_x(I^{\al})}\notag\\
				&\les \||\na|^{\gamma}\Phi_{\al}\|_{L^{\infty}_{t}L^{2}_x(I^{\al})}\big[\|\Phi_{\al}\|^2_{L^{\frac{8}{3}}_{t}L^{12}_x(I^{\al})}+\|v_{\al}\|^2_{L^{\frac{8}{3}}_{t}L^{12}_x(I^{\al})}\big]\notag\\
				&\quad +|I^\al|^{\frac{1}{4}}\||\na|^{\gamma}v_{\al}\|_{L^{\infty}_{t}L^{\frac{6}{1+2\gamma}}_x(I^{\al})}\|v_{\al}\|_{L^{\frac{8}{3}}_{t}L^{12}_x(I^{\al})}\|\Phi_{\al}\|_{L^{8}_{t}L^{\frac{12}{5-4\gamma}}_x(I^{\al})}\notag\\
				&\les \|\Phi_{\al}\|_{\mS^{\gamma}(I^{\al})}\big[\|\Phi_{\al}\|^2_{\mS^{\frac{1}{2}}(I^{\al})}
				+\|v\|^2_{L^{\frac{8}{3}}_{t}L^{12}_x(I)}+T^{\frac{1}{4}}\|v\|_{L^{\frac{8}{3}}_{t}L^{12}_x(I)}\|v\|_{\mS^1(I)}\big].
			\end{align}
			We note that $(8,12)$ is $\dot{H}^{1}$-admissible, using H\"{o}lder's inequality and \eqref{lem:est-v_al}, we have  
			\begin{align*}
				\|v\|_{L^{\frac{8}{3}}_{t}L^{12}_x(I)}&\les T^{\frac{1}{4}}	\|v\|_{L^{8}_{t}L^{12}_x(I)}\les T^{\frac{1}{4}}\|v\|_{\mS^{1}(I)}\leq C(\|v_0\|_{H^1}) T^{\frac{1}{4}},
			\end{align*}
			then we have 
			\begin{align}
				\label{est-Phi-cubic term01}
				\|\eqref{I1-1}\|_{\mathcal{S}^{\gamma}(I^{\al})}
				\leq 
				C(\|v_0\|_{H^1})  \|\Phi_{\al}\|_{\mS^{\gamma}(I^{\al})}\Big(\|\Phi_{\al}\|^2_{\mS^{\frac{1}{2}}(I^{\al})}
				+T^{\frac{1}{2}}\Big).
			\end{align}
%
			\textit{Estimate for \eqref{I1-2}.}
			Note that $w_\al = \frac12 w_{\al,+} + \frac12 w_{\al,-}$. We only deal with the term $w_{\al,+}$ for brevity, and the method for $w_{\al,-}$ is similar. Taking the Fourier transform of $\eqref{I1-2},$ 
			\begin{align*}
				\F \big(e^{\frac{i}{2}t\De} \eqref{I1-2}\big)(\xi)
				&=	\frac{i}{2}\al^{-1}
				\int_{0}^{t}
				\int_{\R^3}	e^{is\frac{|\xi|^2}{2}} 	e^{-is|\xi_1|} \widehat{w_{\al,+}(0)}(\xi_1)\hat{v}_\al(\xi-\xi_1) \dxio\ds.
			\end{align*}
			Then using integration-by-parts, we further get that 
			\begin{equation}
				\label{F1}
				\begin{aligned}
					\F \big(e^{\frac{i}{2}t\De}\eqref{I1-2}\big)(\xi) =&-\frac{1}{2}\al^{-1}
					\int_{\R^3} e^{is\frac{|\xi|^2}{2}}\frac{1 }{|\xi_1|}  \wh{w}_{\al,+}(\xi_1)\wh{v}_\al(\xi-\xi_1)\dxio\Big|_{s=0}^{s=t} 
					\\
					&	+\frac{i}{2}\al^{-1}
					\int_{0}^{t}
					\int_{\R^3}	e^{is\frac{|\xi|^2}{2}}\frac{|\xi|^2 }{2|\xi_1|} 
					\wh{w}_{\al,+}(\xi_1)\wh{v}_\al(\xi-\xi_1)\dxio\ds
					\\
					&+\frac{1}{2}\al^{-1}
					\int_{0}^{t}
					\int_{\R^3}	e^{is\frac{|\xi|^2}{2}}\frac{1}{|\xi_1|} 
					\wh{w}_{\al,+}(\xi_1)\partial_s\wh{v}_\al(\xi-\xi_1)\dxio\ds.
				\end{aligned}
			\end{equation}
			We now take the inverse Fourier transform of \eqref{F1} to obtain
			\begin{align}
				\label{lowterm-B}
				\eqref{I1-2}=&-\frac{1}{2}\al^{-1}\Big[e^{-\frac{i}{2}(t-s)\Delta}|\nabla|^{-1}w_{\al,+}\, v_{\al} (s,x)\Big]\Big|_{s=0}^{s=t}
				\\
				\label{lowterm-IN1}
				&+\frac{i}{4}\al^{-1}\int_{0}^{t}e^{-\frac{i}{2}(t-s)\Delta}\De \big(|\nabla|^{-1}w_{\al,+} \, v_\al \big)(s,x)\ds
				\\
				\label{lowterm-IN2}
				&+\frac{1}{2}\al^{-1}\int_{0}^{t}e^{-\frac{i}{2}(t-s)\Delta} \big(|\nabla|^{-1}w_{\al,+}\, \partial_sv_\al \big) (s,x)\ds.
			\end{align}
			Then using Lemma \ref{lem:stri}, we have that 
			\begin{align*}
				\|\eqref{lowterm-B}\|_{\mS^{\gamma}(I^{\al})}
				\les	& \al^{-1}\left[\big\|e^{-\frac{i}{2}t\Delta }|\nabla|^{-1}w_\al v_{\al} (0)\big\|_{\mS^{\gamma}(I^\al)}
				+\left\||\nabla|^{-1}w_\al v_{\al}(t)\right\|_{\mS^{\gamma}(I^\al)}\right]
				\notag\\
				\les&\al^{-1} \left[\left\||\nabla|^{\gamma}\big(|\nabla|^{-1}w_\al v_{\al}\big)\right\|_{L^{\infty}_{t}L^2_x(I^\al)}+\left\||\nabla|^{\gamma}\big(|\nabla|^{-1}w_\al v_{\al}\big)\right\|_{ L^{2}_{t}L^6_x(I^\al)}\right]
				\notag\\
				\les&\al^{-1}\left\| |\nabla|^{\gamma-1}w_{\al}\right\|_{{L^{\infty}_{t}L^{8}_x}(I^\al)}\big[\left\|v_{\al} \right\|_{L^{\infty}_{t}L^{\frac{8}{3}}_x(I^\al)}+\left\|v_{\al} \right\|_{L^{2}_{t}L^{24}_x(I^\al)}\big]
				\notag\\
				&+\al^{-1}\left\| |\nabla|^{-1}w_{\al}\right\|_{{L^{\infty}_{t}L^{8}_x}(I^\al)}\big[\left\||\na|^{\gamma}v_{\al} \right\|_{L^{\infty}_{t}L^{\frac{8}{3}}_x(I^\al)}+\left\||\na|^{\gamma}v_{\al} \right\|_{L^{2}_{t}L^{24}_x(I^\al)}\big]
	,
			\end{align*}
		we note that  $(\infty,\frac{8}{3})$ is $\dot{H}^{\frac{3}{8}}$-admissible, and $(2,24)$ is $\dot{H}^{\frac{3}{8}}$-admissible,	using \eqref{lem:est-v_al}, \eqref{lem:est:w_al(in,2)}, and Sobolev's embedding $\dot{H}^{\frac{9}{8}}\hookrightarrow L^{8}$ in $x$, we have
			\begin{align}
				\label{estlowterm-B}
				\|\eqref{lowterm-B}\|_{\mS^{\gamma}(I_{\al})}
				\leq& C(\|v_0\|_{H^{1+\gamma}})
				\al^{-\frac{1}{2}-\gamma}\|(w_0,|\al\na|^{-1}w_1)\|_{H^{1+\gamma}\times H^{1+\gamma}}.
			\end{align}
Similarly,  we note that $(\frac{4}{\varepsilon},\frac{6}{3-\varepsilon})$ is $L^2$-admissible with $0<\varepsilon\ll 1$, we have
\begin{equation}
	\label{est:lowin1}
\begin{aligned}
&\quad\|\eqref{lowterm-IN1}+\eqref{lowterm-IN2}\|_{\mS^{\gamma}(I^{\al})}\\
&\les \al^{-1}\left\||\nabla|^{2+\gamma}\big(|\nabla|^{-1}w_\al v_{\al}\big)\right\|_{L^{1}_t L^{2}_x(I^\al)}
+\al^{-1}\left\||\nabla|^{\gamma}\big(|\nabla|^{-1}w_\al\p_tv_{\al}\big)\right\|_{L^{1}_t L^{2}_x(I^\al)}
	\\
&\les \al^{-1}|I^{\al}|^{\frac{2}{3}}\left\| |\nabla|^{1+\gamma}w_\al\right\|_{L^{\infty}_{t}L^{2}_x(I^\al)}\left\|v_{\al} \right\|_{L^{3}_{t}L^{\infty}_x(I^\al)}\\
&\quad+\al^{-1}|I^{\al}|^{\frac{2+\varepsilon}{4}}\left\| |\nabla|^{-1+\gamma}w_\al\right\|_{L^{\frac{2}{1-\varepsilon}}_{t}L^{\frac{6}{\varepsilon}}_x(I^\al)}\left\|\p_tv_{\al} \right\|_{L^{\frac{4}{\varepsilon}}_{t}L^{\frac{6}{3-\varepsilon}}_x(I^\al)}
		\\
&\quad+
\al^{-1}|I^{\al}|^{\frac{2+\varepsilon}{4}}\left\| |\nabla|^{-1}w_\al\right\|_{L^{\frac{2}{1-\varepsilon}}_{t}L^{\frac{6}{\varepsilon}}_x(I^\al)}\left\||\na|^{2+\gamma}v_{\al}+|\na|^{\gamma}\p_tv_{\al} \right\|_{L^{\frac{4}{\varepsilon}}_{t}L^{\frac{6}{3-\varepsilon}}_x(I^\al)}.
\end{aligned}
\end{equation}
	Using the equation of $v_\al$ in \eqref{equ-v_al}, we have
			\begin{align}
				\label{est:ptval}
				\|\partial_tv_\al\|_{\mS^{k}}\les \|v_\al\|_{\mS^{k+2}}+\|v_\al\|_{\mS^{k+2}}\|v_\al\|^2_{\mS^{\frac{1}{2}}}
				\leq C(\|v_0\|_{H^1}) \|v_\al\|_{\mS^{k+2}}.
			\end{align}
 We note that $(3,\frac{18}{5})$ is $L^2$-admissible, by Sobolev's embedding  $\dot{W}^{\varepsilon,\frac{2}{\varepsilon}}\hookrightarrow L^{\frac{6}{\varepsilon}}$, $W^{\frac{5}{6}+\varepsilon,\frac{18}{5}}\hookrightarrow L^{\infty}$ in $x$, \eqref{lem:est-v_al}, \eqref{lem:est:w_al(2,in)}, and \eqref{est:ptval}, we have
			\begin{align}
				\label{est-lowterm-IN}
				&\quad\|\eqref{lowterm-IN1}+\eqref{lowterm-IN2}\|_{\mS^{\gamma}(I^{\al})}\notag\\
				&\leq 	C\al^{-1}|I^{\al}|^{\frac{2}{3}}\left\| |\nabla|^{1+\gamma}w_\al\right\|_{L^{\infty}_{t}L^{2}_x(I^\al)}\left\|v_{\al} \right\|_{L^{3}_{t}L^{\infty}_x(I^\al)}
				\notag\\
				&\quad+C(\|v_0\|_{H^1}) \al^{-1}|I^{\al}|^{\frac{2+\varepsilon}{4}}\left\||\na|^{-1+\gamma+\varepsilon} w_\al\right\|_{L^\frac{2}{1-\varepsilon}_tL^{\frac{2}{\varepsilon}}_x(I^\al)}\left\|v_{\al} \right\|_{\mS^{2}(I^\al)}\notag\\
				&\quad+ C(\|v_0\|_{H^1}) \al^{-1}|I^{\al}|^{\frac{2+\varepsilon}{4}}\left\||\na|^{-1+\varepsilon} w_\al\right\|_{L^\frac{2}{1-\varepsilon}_tL^{\frac{2}{\varepsilon}}_x(I^\al)}\left\|v_{\al} \right\|_{\mS^{2+\gamma}(I^\al)}\notag\\
				&\leq C(\|v_0\|_{ H^1})\|(w_0,|\al\na|^{-1}w_1)\|_{H^{1+\gamma}\times H^{1+\gamma}}\big[\al^{-\frac{1}{2}-\gamma}+\al^{-1-\gamma+\frac{\varepsilon}{2}}\left\|v \right\|_{X^{2+\gamma}(I)}\big].
			\end{align}
			Combining \eqref{estlowterm-B} and  \eqref{est-lowterm-IN},  we obtain 
			\begin{align}
				\label{est-qua-inte}
				\|\eqref{I1-2}\|_{\mS^{\gamma}(I^\al)}
				\leq&C(\|v_0\|_{ H^{1+\gamma}})\|(w_0,|\al\na|^{-1}w_1)\|_{H^{1+\gamma}\times H^{1+\gamma}}\al^{-\frac{1}{2}-\gamma}\big[1+\al^{\frac{-1+\varepsilon}{2}}\left\|v \right\|_{X^{2+\gamma}(I)}\big].
			\end{align}
			
			\textit{Estimate for \eqref{I1-3}-\eqref{I1-5}}. Using Lemma \ref{lem:stri}, Lemma \ref{lem:kato-Ponce}, and \eqref{lem:est:w_al(in,2)}, we have
\begin{align}
\label{est-qua21}
\|\eqref{I1-3}\|_{\mS^{\gamma}(I^{\al})}
\lesssim
& \al^{-1}\big\||\na|^{\gamma}\big(w_{\al}\Phi_{\al}\big)\big\|_{L^{\frac{4}{3}}_t L^{\frac{3}{2}}_x(I^\al)}\notag\\
\les & \al^{-1}|I^{\al}|^{\frac{1+2\gamma}{4}}\left\||\na|^{\gamma}w_{\al}\right\|_{L^{\infty}_t L^{2}_x(I^\al)}
\left\|\Phi_{\al}\right\|_{L^{\frac{2}{1-\gamma}}_t L^{6}_x(I^\al)}
				\notag\\
				&
				+\al^{-1}|I^{\al}|^{\frac{1+2\gamma}{4}}\left\|w_{\al}\right\|_{L^{\infty}_t L^{\frac{6}{3-2\gamma}}_x(I^\al)}
				\left\||\na|^{\gamma}\Phi_{\al}\right\|_{L^{\frac{2}{1-\gamma}}_t L^{\frac{6}{1+2\gamma}}_x(I^\al)}
				\notag\\
				\leq  &CT^{\frac{1+2\gamma}{4}}\|(w_0,|\al\na|^{-1}w_1)\|_{H^\gamma\times H^\gamma}\left\|\Phi_{\al}\right\|_{\mS^{\gamma}(I^{\al})},
			\end{align}
			where we note that $(\frac{2}{1-\gamma},\frac{6}{1+2\gamma})$ is $L^2$-admissible.
 Similarly, by \eqref{lem:est-v_al}, we have
\begin{align}
\label{est-qua31}
\|\eqref{I1-4}\|_{\mS^{\gamma}(I^{\al})}
\les &\al^{-1}
	\big\||\na|^{\gamma}\big(v_{\al}\Psi_{\al}\big)\big\|_{L^{2}_t L^{\frac{6}{5}}_x(I^\al)}\notag\\
				\les&
				\al^{-1}|I^{\al}|^{\frac{3}{4}}\left\||\na|^{\gamma}v_{\al}\right\|_{L^{\infty}_t L^{\frac{6}{1+2\gamma}}_x(I^\al)}\left\|\Psi_{\al}\right\|_{L^{\infty}_t L^{\frac{6}{3-2\gamma}}_x(I^\al)}\notag\\
				&+\al^{-1}|I^{\al}|^{\frac{3}{4}}\left\|v_{\al}\right\|_{L^{\infty}_t L^{6}_x(I^\al)}\left\||\na|^{\gamma}\Psi_{\al}\right\|_{L^{\infty}_t L^{2}_x(I^\al)}
				\notag\\
				\leq &C(\|v_0\|_{H^1})T^{\frac{1}{2}}\left\|\Psi_{\al}\right\|_{\mW^{\gamma}(I^{\al})},
			\end{align}
			where we note that $(\infty,\frac{6}{1+2\gamma})$ is $\dot{H}^{1-\gamma}$-admissible.
By Lemma \ref{lem:stri}, we have 		
			\begin{align}
				\label{est-qua4}
				\|\eqref{I1-5}\|_{\mS^{\gamma}(I^{\al})}
				\les &\al^{-1}
				\left\||\na|^{\gamma}\big(\Phi_{\al}\Psi_{\al}\big)\right\|_{L^{2}_t L^{\frac{6}{5}}_x(I^\al)}
				\notag\\
				\les& \al^{-1}|I^{\al}|^{\frac{1}{2}}\big[\left\|\Phi_{\al}\right\|_{L^{\infty}_t \dot{H}^\gamma_x(I^\al)}\left\|\Psi_{\al}\right\|_{L^{\infty}_t L^{3}_x(I^\al)}+\left\|\Phi_{\al}\right\|_{L^{\infty}_t L^{3}_x(I^\al)}\left\|\Psi_{\al}\right\|_{L^{\infty}_t \dot{H}^\gamma_x(I^\al)}\big]
				\notag\\
				\leq&CT^{\frac{1}{2}}\big\|\big(\Phi_{\al},\Psi_\al\big)\big\|_{\mS^{\gamma}\times \mW^{\gamma}(I^{\al})}\big\|\big(\Phi_{\al},\Psi_\al\big)\big\|_{\mS^{\frac{1}{2}}\times \mW^{\frac{1}{2}}(I^{\al})}.
			\end{align}
Therefore, combining \eqref{est-Phi-cubic term01}, \eqref{est-qua-inte}, \eqref{est-qua21}, \eqref{est-qua31}, and \eqref{est-qua4}, we obtain
\begin{align*}
\left\|\Phi_{\al}\right\|_{\mS^{\gamma}(I^\al)} &\leq 
		C_2\|(w_0,|\al\na|^{-1}w_1)\|_{H^{1+\gamma}\times H^{1+\gamma}}\big[\al^{-\frac{1}{2}-\gamma}+\al^{-1-\gamma+\frac{\varepsilon}{2}}\left\|v \right\|_{X^{2+\gamma}(I)}\big]  
\\
				&\quad +C_2\|(\Phi_{\al},\Psi_\al)\|_{\mS^{\gamma}\times \mW^{\gamma} (I^{\al})}\Big(\|\Phi_{\al}\|^2_{\mS^{\frac{1}{2}}(I^\al)}
				+T^{\frac{1}{2}}\big\|\big(\Phi_{\al},\Psi_\al\big)\big\|_{\mS^{\frac{1}{2}}\times \mW^{\frac{1}{2}}(I^{\al})}+T^{\frac{1+2\ga}{4}}\Big),
			\end{align*}
where $C_2$ is a constant only depending on $\|(v_0,w_0,|\al\na|^{-1}w_1)\|_{H^{1+\gamma}\times H^{1+\gamma}\times H^{1+\gamma}}.$ 
Thus, we complete the proof of Lemma \ref{lem:nonlinear-estimates-phi}.	
	\end{proof}

Combining Lemma  \ref{lem:nonlinear-estimates-psi} and \ref{lem:nonlinear-estimates-phi}, we complete the proof of Proposition \ref{prop:estPsiPhi}.

\subsection{ Proof of Theorem \ref{mainthm}}	
We begin the proof by establishing the nonlinear estimates of $\|(r,q)\|_{\mS^{\frac{1}{2}}\times\mW^{\frac{1}{2}}(I)} = \|\big(\Phi_\al,\Psi_\al\big)\|_{\mS^{\frac12}\times\mW^{\frac12}(I^\al)}$, which is scaling invariant with respect to the transform $S_\al$ in \eqref{rel:Salf-f}.
\begin{lem}
\label{lem:estPhiPsi-12}
Let Assumption \ref{assu:main} hold.
Then, there exists $\al_0>0$ sufficiently large such that for $\al>\al_0, $
\begin{equation}
\label{est:12H3}
\begin{aligned}
& \|(r,q)\|_{\mS^{\frac{1}{2}}\times\mW^{\frac{1}{2}}(I)} 
\leq D\big(\|\vec{u}_0\|_{\HH^3},\|(v_0,w_0,|\al\na|^{-1}w_1)\|_{H^{\frac{5}{2}}\times H^{\frac{3}{2}}\times H^{\frac{3}{2}}}\big)\al^{-\frac{1}{2}},
\end{aligned} 
\end{equation}
 where $D$ is a constant depending on $\big\|\vec u_0\big\|_{\HH^{3}}$ and $\big\|(v_0, w_0, |\alpha\nabla|^{-1}w_1)\big\|_{H^{\frac{5}{2}} \times H^{\frac{3}{2}}\times H^{\frac{3}{2}} }$.
\end{lem}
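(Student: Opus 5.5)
The plan is to apply Proposition \ref{prop:estPsiPhi} at the scaling-critical level $\gamma=\frac12$ and then close a bootstrap in the time variable. First choose the parameters $\kappa_1=\frac12$ and $\kappa_2=-\frac12$. Then
\[
3+\gamma-\kappa_1=3+\gamma+\kappa_2=3,
\]
so the hypothesis $\vec u_0\in\HH^{\lceil 3+\gamma-\kappa_1\rceil}\cap\HH^{\lceil3+\gamma+\kappa_2\rceil}$ is exactly $\vec u_0\in\HH^3$. By Lemma \ref{lem:local-theory}, $\|\vec u\|_{L^\infty_t(I;\HH^3)}\le C\|\vec u_0\|_{\HH^3}$. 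Since $v_0=u_0\in H^3$, Lemma \ref{lem:local-theory}(3) also gives $\|v\|_{X^{5/2}(I)}\le C\|v_0\|_{H^{5/2}}$. Substituting these into \eqref{eq:nonlinear-estimate-psi}, the linear source terms are of size $\al^{-3/2}$, $\al^{-1/2}$, $\al^{-1}$ and $\al^{-3/2+\varepsilon/2}$. For $\al\ge1$ all of them are bounded by $D\al^{-1/2}$, with $D$ depending only on $\|\vec u_0\|_{\HH^3}$ and $\|(v_0,w_0,|\al\na|^{-1}w_1)\|_{H^{5/2}\times H^{3/2}\times H^{3/2}}$. The constant $C_2$ depends only on the $H^{3/2}$ norms, which are controlled by the same quantities.

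Next set, for $t\in[0,T]$,
\[
X(t):=\|(\Phi_\al,\Psi_\al)\|_{\mS^{\frac12}\times\mW^{\frac12}([0,\al^2 t])}=\|(r,q)\|_{\mS^{\frac12}\times\mW^{\frac12}([0,t])},
\]
where the equality is the scaling identity \eqref{rel:Salf-f} with $\lambda=\frac12$. The proposition, applied on every subinterval $[0,t]\subset I$, gives
\[
X\le D\al^{-\frac12}+C_2X\big(X^2+T^{\frac12}X+T^{\frac12}\big).
\]
Here I used $T^{(1+2\gamma)/4}=T^{1/2}$ at $\gamma=\frac12$. I would first shrink $T$, still depending only on the stated norms, so that $C_2T^{1/2}\le\frac14$. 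This is compatible with the smallness conditions on $T$ already imposed in Lemma \ref{lem:nonlinear-estimates-psi} and Lemma \ref{lem:local-theory}.

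Then run a continuity argument with the hypothesis $X(t)\le 4D\al^{-1/2}$. Under it, $C_2(X^2+T^{1/2}X)\le C_2(16D^2\al^{-1}+4DT^{1/2}\al^{-1/2})\le\frac14$ once $\al>\al_0(D,C_2,T)$. Together with $C_2T^{1/2}\le\frac14$ this yields $X\le D\al^{-1/2}+\frac12X$, hence $X\le 2D\al^{-1/2}$, which strictly improves the hypothesis. The argument starts because $X(0)=0$, as the initial errors vanish under Assumption \ref{assu:main}. It propagates because $t\mapsto X(t)$ is continuous: $(r,q)\in C([0,T];H^{1/2}\times H^{1/2})$ and the $L^2_tL^6_x$ part is absolutely continuous in the endpoint. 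This gives \eqref{est:12H3} with constant $2D$ on all of $I$.

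The main obstacle I expect is bookkeeping rather than analysis. The time $T$ must be fixed before $\al_0$, and it must depend only on the listed norms and not on $\al$. The constants $C_1,C_2$ must be uniform in $\al$; in particular $\|(w_0,|\al\na|^{-1}w_1)\|_{H^{3/2}\times H^{3/2}}$ has to be bounded independently of $\al$ via $w_0=n_0+|u_0|^2$ and the remark after Assumption \ref{assu:main}. I would also check that the cubic term $C_2X^3$ is genuinely absorbed, which works because the bootstrap hypothesis makes $X^2=O(\al^{-1})$.
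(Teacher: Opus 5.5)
Your proposal is correct and follows the paper's proof. It uses the same choice $(\gamma,\kappa_1,\kappa_2)=(\frac12,\frac12,-\frac12)$ in Proposition \ref{prop:estPsiPhi}, the same absorption with small $T$ and large $\al_0$ under a bootstrap hypothesis of size $O(\al^{-1/2})$, and the same scale invariance of the $\mS^{\frac12}\times\mW^{\frac12}$ norm. The only difference is that you make the continuity argument explicit (via $X(0)=0$ and continuity of $t\mapsto X(t)$), whereas the paper leaves it implicit by simply assuming $\|(\Phi_\al,\Psi_\al)\|_{\mS^{\frac12}\times\mW^{\frac12}(I^\al)}\le a_0$.
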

\begin{proof}
 Let $(\gamma,\kappa_1, \kappa_2)=(\frac{1}{2},\frac{1}{2},-\frac{1}{2})$ in Proposition \ref{prop:estPsiPhi}. From Lemma \ref{lem:local-theory}, we have 
\begin{equation}
	\label{1/2}
\begin{aligned}
&\quad\|\big(\Phi_\al,\Psi_\al\big)\|_{\mS^{\frac{1}{2}}\times\mW^{\frac{1}{2}}(I^\al)}\notag\\
&\leq 
D_1\big(\|\vec{u}_0\|_{\HH^3},\|(w_0,|\al\na|^{-1}w_1)\|_{ H^{\frac{3}{2}}\times H^{\frac{3}{2}}}\big)\al^{-\frac{1}{2}}\\
&\quad
+D_2\big(\|(v_0,w_0,|\al\na|^{-1}w_1)\|_{H^\frac{5}{2}\times H^{\frac{3}{2}}\times H^{\frac{3}{2}}}\big)\al^{-1}  
	\\
&\quad + D_3 \big(\|(v_0,w_0,|\al\na|^{-1}w_1)\|_{H^\frac{3}{2}\times H^{\frac{3}{2}}\times H^{\frac{3}{2}}}\big) \|(\Phi_{\al},\Psi_\al)\|_{\mS^{\frac{1}{2}}\times \mW^{\frac{1}{2}} (I^{\al})}\\
&\qquad\qquad\qquad\qquad\qquad\qquad\qquad\qquad
\cdot\Big[T^{\frac{1}{2}}+
T^{\frac{1}{2}}\big\|\big(\Phi_{\al},\Psi_\al\big)\big\|_{\mS^{\frac{1}{2}}\times \mW^{\frac{1}{2}}(I^{\al})}
+\|\Phi_{\al}\|^2_{\mS^{\frac{1}{2}}(I^\al)}\Big].
\end{aligned}
\end{equation}
Denote 
\begin{align*}
c_1\coloneqq & D_1\big(\|\vec{u}_0\|_{\HH^3},\|(w_0,|\al\na|^{-1}w_1)\|_{ H^{\frac{3}{2}}\times H^{\frac{3}{2}}}\big)\al^{-\frac{1}{2}},
\\
c_2\coloneqq &D_2\big(\|(v_0,w_0,|\al\na|^{-1}w_1)\|_{H^\frac{5}{2}\times H^{\frac{3}{2}}\times H^{\frac{3}{2}}} \big)\al^{-1} ,
\\
a_0\coloneqq  &2(c_1+c_2).
\end{align*}
Assume that $$\big\|\big(\Phi_{\al},\Psi_\al\big)\big\|_{\mS^{\frac{1}{2}}\times \mW^{\frac{1}{2}}(I^{\al})}\leq a_0.$$
Note that $\al_0\gg1$, then for any $\al>\al_0$,
\EQn{
\label{1a_0upper}
a_0\le  2D_1 \al_0^{-\frac12} + 2D_2 \al_0^{-1}\le   2(D_1+D_2)\al_0^{-\frac12}.
}
We may take $\al_0$ suitably such that $a_0\ll1$. Moreover, we can choose $T$ suitably small and large $\al_0$ such that
\EQn{
\label{a_0upper}
D_3T^{\frac{1}{2}}+D_3 T^{\frac{1}{2}}a_0+D_3a_0^2\le & 2D_3T^{\frac{1}{4}}+D_3a_0\\ 
\le & 2D_3T^{\frac{1}{4}}+2D_3 \mbrkb{ (D_1+D_2)\al_0^{-\frac12}}\\
\leq& \frac{1}{4}.
}
Therefore, by \eqref{a_0upper}, we have
\EQ{
		&\quad\|\big(\Phi_\al,\Psi_\al\big)\|_{\mS^{\frac{1}{2}}\times\mW^{\frac{1}{2}}(I^\al)}\notag\\
		&\leq 
		D_1\al^{-\frac{1}{2}} 
		+D_2 \al^{-1}  + \mbrkb{D_3 T^{\frac{1}{2}} + D_3T^{\frac12}a_0+D_3a_0^2} \|(\Phi_{\al},\Psi_\al)\|_{\mS^{\frac{1}{2}}\times \mW^{\frac{1}{2}} (I^{\al})},
}
which implies
\begin{align*}
\|\big(\Phi_\al,\Psi_\al\big)\|_{\mS^{\frac{1}{2}}\times\mW^{\frac{1}{2}}(I^\al)}
&\leq\frac{4}{3} c_1 + \frac43 c_2 < a_0.
\end{align*}
Then \eqref{est:12H3} follows by scaling $\|(r,q)\|_{\mS^{\frac{1}{2}}\times\mW^{\frac{1}{2}}(I)} = \|\big(\Phi_\al,\Psi_\al\big)\|_{\mS^{\frac12}\times\mW^{\frac12}(I^\al)}$.
\end{proof}

Next, we extend the above $\dot H^{\frac12}$-level estimate to $\|(r,q)\|_{\mS^{\gamma}\times\mW^{\gamma}(I)}$ for  $\gamma=0$.
\begin{prop}
\label{prop: est:gamma}
	Let Assumption \ref{assu:main} hold. Given  $\kappa\in\{0,1\}$, suppose that $\vec{u}_0\in\HH^{ 3+\kappa}$,
then there exists $\al_0$ suitably large such that when $\al>\al_0, $
\begin{align*}
\|q\bigr\|_{ \mW^{0}(I)}
&\leq  C
 \big(
\al^{-2}
 +  \al^{-1-\kappa}
\big) ,
\\
\|(r,q)\|_{\mS^{0}\times \mW^{0}(I)}&	\leq C
\big[\al^{-2}+\al^{-1-\kappa}+\al^{-1}\|(w_0,|\al\na|^{-1}w_1)\|_{H^{1}\times H^{1}}\big],
\end{align*}
where $C = C\big( \|\vec{u}_0\|_{\HH^{ 3+\kappa}},\ \|(v_0, w_0, |\alpha\nabla|^{-1}w_1)\|_{H^{2}\times H^{1}\times H^{1}} \big).$
\end{prop}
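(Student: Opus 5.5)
We apply Proposition \ref{prop:estPsiPhi} with $\gamma=0$ and then close the estimate using the $\dot H^{\frac12}$-level smallness from Lemma \ref{lem:estPhiPsi-12}. Since $\|(r,q)\|_{\mS^0\times\mW^0(I)}=\al^{-\frac12}\|(\Phi_\al,\Psi_\al)\|_{\mS^0\times\mW^0(I^\al)}$ by \eqref{rel:Salf-f}, it suffices to show $\|(\Phi_\al,\Psi_\al)\|_{\mS^0\times\mW^0(I^\al)}\lesssim \al^{-\frac32}+\al^{-\frac12-\kappa}+\al^{-\frac12}\|(w_0,|\al\na|^{-1}w_1)\|_{H^1\times H^1}$.

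\textbf{Wave component.} Take $\gamma=0$, $\kappa_1=\frac12$ and $\kappa_2=\kappa\in\{0,1\}$ in Lemma \ref{lem:nonlinear-estimates-psi}. The regularity required is then $\HH^{\lceil 5/2\rceil}\cap\HH^{3+\kappa}=\HH^{3+\kappa}$, which matches the hypothesis. The lemma, which is already closed in $\Psi_\al$, gives
\[
\|\Psi_\al\|_{\mW^0(I^\al)}\le C_1\al^{\frac12}\bigl\{\al^{-\frac32}\|\vec u\|_{L^\infty_t\HH^{5/2}}[1+\|(w_0,|\al\na|^{-1}w_1)\|_{H^1\times H^1}]+\al^{-1-\kappa}\|\vec u\|_{L^\infty_t\HH^{3+\kappa}}\bigr\}.
\]
We bound $\|\vec u\|_{L^\infty_t\HH^{3+\kappa}}$ by $C(\|\vec u_0\|_{\HH^{3+\kappa}})$ using Lemma \ref{lem:local-theory}. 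Undoing the scaling gives $\|q\|_{\mW^0(I)}\le C(\al^{-\frac32}\cdot\al^{\frac12}\cdot\al^{-\frac12}\cdots)$. More precisely, we get $\al^{-\frac12}\cdot\al^{\frac12}(\al^{-\frac32}+\al^{-1-\kappa})$. This seems to give $\al^{-\frac32}$ rather than $\al^{-2}$, so the exponent bookkeeping needs care. With $\gamma=0$ and $\kappa_1=\frac12$ the first term is $\al^{-1}$ after scaling, which is too weak. The statement's $\al^{-2}$ suggests instead taking $\kappa_1=0$, where $\HH^{3}$ suffices, giving $\al^{\frac12-2}$ scaled back to $\al^{-2}$; the second term gives $\al^{-1-\kappa}$. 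So we choose $\kappa_1=0$ and $\kappa_2=\kappa$. This yields the first claimed bound, with the $w$-dependence absorbed into $C$.

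\textbf{Schrödinger component.} Apply Lemma \ref{lem:nonlinear-estimates-phi} with $\gamma=0$. The source term is $C_2\|(w_0,|\al\na|^{-1}w_1)\|_{H^1\times H^1}(\al^{-\frac12}+\al^{-1+\frac\varepsilon2}\|v\|_{X^2})$, which after rescaling becomes $\al^{-1}\|(w_0,\cdot)\|_{H^1\times H^1}$ up to constants. Here Lemma \ref{lem:local-theory}(3) gives $\|v\|_{X^2}\le C(\|v_0\|_{H^2})$. The feedback factor is $\|\Phi_\al\|_{\mS^{1/2}}^2+T^{\frac12}\|(\Phi_\al,\Psi_\al)\|_{\mS^{1/2}\times\mW^{1/2}}+T^{\frac14}$. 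By Lemma \ref{lem:estPhiPsi-12}, together with scale invariance of the $\dot H^{\frac12}$ norms, the first two terms are $O(\al^{-\frac12})$. Hence for $\al>\al_0$ large and $T$ small (shrinking $T$ if needed, depending only on the stated quantities), the feedback factor is at most $\frac12$.

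\textbf{Closing the argument.} The feedback multiplies $\|(\Phi_\al,\Psi_\al)\|_{\mS^0\times\mW^0}$. We add the already-obtained bound for $\Psi_\al$ to the $\Phi_\al$ inequality and absorb the $\frac12$ factor into the left side. Scaling back then gives the second inequality. The main obstacle is this absorption step. It requires the a priori finiteness of $\|(\Phi_\al,\Psi_\al)\|_{\mS^0\times\mW^0(I^\al)}$, which follows from the qualitative regularity of $u,v,w$ on $I$. It also requires that the smallness constants depend only on the quantities listed in $C$, and not on $\al$. We handle this via a continuity argument on $[0,t]$, exactly as in the proof of Lemma \ref{lem:estPhiPsi-12}.
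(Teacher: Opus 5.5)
Your proposal is correct and matches the paper's proof. The paper takes $(\gamma,\kappa_1,\kappa_2)=(0,0,\kappa)$ in Proposition \ref{prop:estPsiPhi}, makes the feedback factor at most $\frac14$ via Lemma \ref{lem:estPhiPsi-12} and small $T$, absorbs and rescales, and gets the $q$ bound by rescaling Lemma \ref{lem:nonlinear-estimates-psi} alone with the same parameters; your split into the $\Psi_\alpha$ lemma followed by the $\Phi_\alpha$ lemma is just the unpacked form of that step, and the exploratory $\kappa_1=\frac12$ detour can simply be deleted.
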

\begin{proof}		
Setting $(\gamma,\kappa_1,\kappa_2)=(0,0,\kappa)$ in Proposition \ref{prop:estPsiPhi}, using Lemma \ref{lem:local-theory}, we have
 \begin{align*}
&\quad
\bigl\|\big(\Phi_\al,\Psi_\al\big)\bigr\|_{\mS^{0}\times \mW^{0}(I^\al)}\\
&\leq
C_1\big(\|\vec{u}_0\|_{\HH^{ 3 +\kappa}},\|(w_0,|\al\na|^{-1}w_1)\|_{H^{1}\times H^{1}}\big)\big[\al^{-\frac{3}{2}}+\al^{-\frac{1}{2}-\kappa}\big]
\\
&\quad+ C_2\big(\|(v_0,w_0,|\al\na|^{-1}w_1)\|_{H^{2}\times H^{1}\times H^{1}}\big)\|(w_0,|\al\na|^{-1}w_1)\|_{ H^{1}\times H^{1}}\alpha^{  -\frac{1}{2}}\\
&\quad+C_3\big(\|(v_0,w_0,|\al\na|^{-1}w_1)\|_{H^{1}\times H^{1}\times H^{1}}\big)\|(\Phi_{\al},\Psi_\al)\|_{\mS^{0}\times \mW^{0} (I^{\al})}
\\
&\qquad\qquad\qquad\qquad\qquad\qquad\qquad\qquad
\cdot\Big(\|\Phi_{\al}\|^2_{\mS^{\frac{1}{2}}(I^\al)}
+T^{\frac{1}{2}}\big\|\big(\Phi_{\al},\Psi_\al\big)\big\|_{\mS^{\frac{1}{2}}\times \mW^{\frac{1}{2}}(I^{\al})}+T^{\frac{1}{4}}\Big).
 \end{align*}
From Lemma \ref{lem:estPhiPsi-12}, we note that  $\al_0\gg1,$ then for any $\al>\al_0$,
\begin{align*}
&\quad\|\Phi_{\al}\|^2_{\mS^{\frac{1}{2}}(I^\al)}+T^{\frac{1}{2}}\big\|\big(\Phi_{\al},\Psi_\al\big)\big\|_{\mS^{\frac{1}{2}}\times \mW^{\frac{1}{2}}(I^{\al})}+T^{\frac{1}{4}}
\\
&\leq D\big(\|\vec{u}_0\|_{\HH^3},\|(v_0,w_0,|\al\na|^{-1}w_1)\|_{H^{\frac{5}{2}}\times H^{\frac{3}{2}}\times H^{\frac{3}{2}}}\big)\al_0^{-\frac{1}{2}}(1+T^{\frac{1}{2}})+T^{\frac{1}{4}},
\end{align*}
which means we can choose suitably $T$ and $\al_0$ such that
\begin{align}
\label{bootgamma}
&\quad C_3\big[\|\Phi_{\al}\|^2_{\mS^{\frac{1}{2}}(I^\al)}+T^{\frac{1}{2}}\big\|\big(\Phi_{\al},\Psi_\al\big)\big\|_{\mS^{\frac{1}{2}}\times \mW^{\frac{1}{2}}(I^{\al})}+T^{\frac{1}{4}}\big]\notag\\
&\leq C_3\big[D\al_0^{-\frac{1}{2}}(1+T^{\frac{1}{2}})+T^{\frac{1}{4}}\big]\leq \frac{1}{4}.
\end{align}
By \eqref{bootgamma}, we obtain 
\begin{align*}
&\quad
\bigl\|\big(\Phi_\al,\Psi_\al\big)\bigr\|_{\mS^{0}\times \mW^{0}(I^\al)}\\
&\leq
\frac{4}{3}C_1\big(\|\vec{u}_0\|_{\HH^{3 +\kappa}},\|(w_0,|\al\na|^{-1}w_1)\|_{H^{1}\times H^{1}}\big)\big[\al^{-\frac{3}{2}}+\al^{-\frac{1}{2}-\kappa}\big]
\\
&\quad+\frac{4}{3} C_2\big(\|(v_0,w_0,|\al\na|^{-1}w_1)\|_{H^{2}\times H^{1}\times H^{1}}\big)\|(w_0,|\al\na|^{-1}w_1)\|_{ H^{1}\times H^{1}}\alpha^{-\frac{1}{2} }.
\end{align*}
Scaling back, we have 
 \begin{align}
 \label{estgamma0,1}
 \bigl\|(r,q)\bigr\|_{\mS^{0}\times \mW^{0}(I)} 
 &\leq 
C_1\big(\|\vec{u}_0\|_{\HH^{ 3 +\kappa}},\|(w_0,|\al\na|^{-1}w_1)\|_{H^{1}\times H^{1}}\big) \big[
 \al^{-2}+\al^{-1-\kappa}\big]
 \notag\\
 &\quad+ C_2\big(\|(v_0,w_0,|\al\na|^{-1}w_1)\|_{H^{2}\times H^{1}\times H^{1}}\big)\|(w_0,|\al\na|^{-1}w_1)\|_{ H^{1}\times H^{1}}\alpha^{-1}.
  \end{align}
We consolidate all constants into a single one, denoted by $C$, which depends on $\|\vec{u}_0\|_{\HH^{3+\kappa}}$ and $\|(v_0,w_0,|\al\na|^{-1}w_1)\|_{H^{2} \times H^{1}\times H^{1}}$. 
Then scaling back, we have
\begin{align*}
	\|(r,q)\|_{\mS^{0}\times \mW^{0}(I)}
	\leq& 
C\big[\al^{-2}+\al^{-1-\kappa}+\al^{-1}\|(w_0,|\al\na|^{-1}w_1)\|_{H^{1}\times H^{1}}\big].
\end{align*}
Finally, scaling back the estimates of $\Psi_\al$ in Lemma \ref{lem:nonlinear-estimates-psi} with $(\gamma,\kappa_1,\kappa_2)=(0,0,\kappa)$ and using Lemma \ref{lem:local-theory}, we have
\begin{align*}
\|q\|_{\mW^{0}(I)} 
&\leq
C\big(
\al^{-2}+\al^{-1-\kappa}\big),
\end{align*}
thus we complete the proof.
\end{proof}

\begin{proof}[Conclude the proof of the main theorem]
With the above proposition, the proof of Theorem \ref{mainthm} is straightforward. 
Recall the initial conditions 
$$ v_0 = u_0,  \quad  w_0=n_0+|u_0|^2, \quad  w_1=n_1+\partial_t|u|^2(0),$$
in Assumption \ref{assu:main}.
For the wave component, applying the first estimate in Proposition \ref{prop: est:gamma} with \(\kappa=0\) and \(\kappa=1\), respectively, yields  \eqref{thm:qconvergencerate1} and  \eqref{thm:qconvergencerate2}. For the Schr\"odinger component, applying the second estimate in Proposition \ref{prop: est:gamma} with \(\kappa=0\) yields \eqref{thm:rconvergencerate1}.
 Finally, if $n_0+|u_0|^2=0$ and $|\na|^{-1}n_1\in H^{ 1}$, then
 \begin{align*}
\|(w_0,|\al\na|^{-1}(n_1+\partial_t|u|^2(0)))\|_{H^{1}\times H^{1}}
&\les \al^{-1}\big(\||\na|^{-1}n_1\|_{H^{1}}+\|\na u_0 \bar{u}_0\|_{H^{1}}\big)\\
&\les \al^{-1}(\||\na|^{-1}n_1\|_{H^{1}}+\|u_0\|^2_{H^{2}}),
 \end{align*}
 where we use the unscaled form of\eqref{ptu2} in the last step. Applying the second estimate in
 Proposition \ref{prop: est:gamma}
with $\kappa=1$, we can obtain \eqref{thm:rconvergencerate2} in  Theorem \ref{mainthm}. 
\end{proof}

		\vspace{1cm}
			\section{Appendix}
			\label{sec:appendix}
			\subsection{Proof of Lemma \ref{lem:bi-W-N}}
		\begin{proof}
			Given a generic $F(t,x)\in L^2_{t,x}(\R\times \R^3),$ let 
			\begin{align*}
				\mathcal{I}\coloneqq \int\int_{\R\times \R^3} F(t,x) [e^{\pm it|\na|}f_M](x)[e^{-\frac{1}{2}it\De}g_N](x)\dx\dt.
			\end{align*}
		Using Plancherel's Theorem and the sifting property of Dirac function $\delta(x),$ we have
		\begin{align*}
			\mathcal{I}&= \int_{\R^3\times\R^3}\int_{\R\times \R} \hat{F}(\tau,\xi)e^{it\tau} e^{\pm it|\xi-\eta|}\hat{f}_M(\xi-\eta)e^{\frac{1}{2}it|\eta|^2}\hat{g}_N(\eta)\dta \dt\dxi\deta\\
			&= \int_{\R^3\times\R^3}\int_{\R} \hat{F}(\tau,\xi)\int_{\R} e^{it(\pm|\xi-\eta|+\frac{1}{2}|\eta|^2+\tau)}\dt\dta\hat{f}_M(\xi-\eta)\hat{g}_N(\eta)\dxi\deta
			\\
			&= \int_{\R^3\times\R^3}\int_{\R} \hat{F}(\tau,\xi)\delta_{\mp|\xi-\eta|-\frac{1}{2}|\eta|^2}(\tau)\dta\hat{f}_M(\xi-\eta)\hat{g}_N(\eta)\dxi\deta
			\\
			&=\int_{\R^3\times\R^3} \hat{F}(\mp|\xi|-\frac{1}{2}|\eta|^2,\xi+\eta)\hat{f}_M(\xi)\hat{g}_N(\eta)\dxi\deta.
		\end{align*}
		Changing variables according to 
		$$\xi=(\xi_1,\xi_2,\xi_3)=(\xi_1,\xi'), \quad \nu=\xi+\eta\in \R^3,\mbox{ and } a=\mp|\xi|-\frac{1}{2}|\eta|^2\in \R.$$
		The corresponding Jacobian denoted by $J$ satisfies
		\begin{align*}
			J^{-1}=\frac{\p(a,\xi',\nu)}{\p(\xi,\eta)}=
				\begin{pmatrix}
				\mp\frac{\xi_1}{|\xi|} & \mp\frac{\xi_2}{|\xi|} &\mp\frac{\xi_3}{|\xi|}&-\eta_1&-\eta_2&-\eta_3
				\\
				0&1&0&0&0&0  \\
				0&0&1&0&0&0  \\
					1&0&0&1&0&0  \\
						0&1&0&0&1&0  \\
						0&0&1&0&0&1  \\
			\end{pmatrix},
		\end{align*} 
		where the determinant of the Jacobian matrix satisfies $|J|^{-1}=\mp\frac{\xi_1}{|\xi|}+\eta_1.$

		 We now justify the lower bound for the Jacobian. Let
		$	A:=\mp\frac{\xi}{|\xi|}+\eta.$
		Since \(|\eta|\sim N\) and \(N\nsim1\), we have
		\[
		|A|
		\ge \bigl||\eta|-1\bigr|
		\gtrsim |1-N|.
		\]
		We decompose the frequency support into the three regions where
		\(|A_j|\) is maximal, \(j=1,2,3\). On each such region,
		\[
		|A_j|\gtrsim |A|\gtrsim |1-N|.
		\]
		It is enough to consider the region where \(j=1\), since the other two
		regions are treated in the same way after a permutation of the coordinate
		axes. Therefore, on this region,
		\[
		\left|\mp\frac{\xi_1}{|\xi|}+\eta_1\right|
		\gtrsim |1-N|,
		\]
		and hence
		\[
		|J|\lesssim |1-N|^{-1}.
		\]
		Then using H\"older's inequality, we have
		\begin{align}
			\label{appen-est:I}
			|\mathcal{I}|&=\left|\int_{\mathbb{R}^2\times\mathbb{R}^3\times  \mathbb{R}} \hat{F}(a,\nu)\hat{f}_M(\xi(a,\xi',\nu))\hat{g}_N(\eta(a,\xi',\nu))J \,da\,d\nu\,d\xi'\right|
			\notag\\
			&\lesssim \|F(a,\nu)\|_{L^{2}_{a,\nu}} \int_{\mathbb{R}^2} \left(\int_{\mathbb{R}\times \mathbb{R}^3} |\hat{f}_M(\xi(a,\xi',\nu))\hat{g}_N(\eta(a,\xi',\nu))J|^2 \,da\,d\nu\right)^{\frac{1}{2}} \,d\xi' \notag\\
			&\lesssim \|F(t,x)\|_{L^{2}_{t,x}} \left( \int_{\mathbb{R}\times \mathbb{R}^3\times \mathbb{R}^2} |\hat{f}_M(\xi(a,\xi',\nu))\hat{g}_N(\eta(a,\xi',\nu))J|^2 \,da\,d\nu\,d\xi' \right)^{\frac{1}{2}}
		 \left( \int_{\{\xi':|\xi'|\leq M\}} \,d\xi' \right)^{\frac{1}{2}} \notag\\
			&\lesssim \|F(t,x)\|_{L^{2}_{t,x}} \left( \int_{\mathbb{R}^3\times \mathbb{R}^3} |\hat{f}_M(\xi)\hat{g}_N(\eta)|^2|J| \,d\xi\,d\eta \right)^{\frac{1}{2}} 
			\left( \int_{\{\xi':|\xi'|\leq M\}} \,d\xi' \right)^{\frac{1}{2}} \notag\\
			&\lesssim M \|F(t,x)\|_{L^{2}_{t,x}} \|J\|_{L^{\infty}}^{\frac{1}{2}} \|f_M\|_{L^{2}_{x}} \|g_N\|_{L^{2}_{x}}.
		\end{align}
		By duality, we have
		\begin{align*}
			\|[e^{\pm it|\na|}f_M][e^{-\frac{1}{2}it\De}g_N]\|_{L^{2}_{t,x}}\les M  |1-N|^{-\frac{1}{2}} \|f_M\|_{L^{2}_{x}} \|g_N\|_{L^{2}_{x}}.
		\end{align*}
	The same argument, using the change of variables $(\xi,\eta)$ to  $(a,\eta',
		\nu)$, yields the bound with $N$ in place of $M$. Combining the two bounds gives the factor $\min\{M,N\}$, we thereby complete the proof.
		\end{proof}


\begin{thebibliography}{100}
			
			\bibitem{Added1988}
			H. Added and S. Added, 
			\newblock{\em Equations of Langmuir turbulence and nonlinear Schrödinger equation: smoothness and approximation}. 
			J. Funct. Anal., 79(1):183--210, 1988.
			
			
			\bibitem{Bai2025rough}
			R. Bai, Y. Lian, and Y. Wu, 
			\newblock{\em Regularization for the Schr\"odinger equation with rough potential: high-dimensional case}. 
		arXiv preprint arXiv:2510.25555, 2025.
			
			
			
			
			
			
			
   \bibitem{BoLi-KatoPonce}
   J. Bourgain and D. Li, 
   \newblock{\em On an endpoint Kato-Ponce inequality}. 
   Differential Integral Equations,
   27(11-12):1037--1072, 2014.
			
			
			
\bibitem{Bourgain1996lwp}
J. Bourgain and J. Colliander,
\newblock{\em On wellposedness of the Zakharov system}.
Int. Math. Res. Not., 1996(11):515--546, 1996.



\bibitem{upvpCandy} 
T. Candy, \newblock{\em Multi-scale bilinear restriction estimates for general phases}. Math. Ann. 375, 777--843,2019.

\bibitem{Candy2018Ann.PDE}
T. Candy, S. Herr,
\newblock{\em On the division problem for the wave maps equation}.
Ann. PDE, 4(2):Paper No. 17, 61 pp., 2018.	

\bibitem{BookNLS2003}
T. Cazenave, 
\newblock{\em Semilinear Schrödinger equations}. 
volume 10 of Courant Lecture Notes in Mathematics. New York University, Courant Institute of Mathematical Sciences, New York; American Mathematical Society, Providence, RI, 2003.
			
			\bibitem{ChenWu2021}
			Z. Chen and S. Wu, 
			\newblock{\em Local well-posedness for the Zakharov system in dimension $d = 2,3$}. 
			Commun. Pure Appl. Anal., 20(12):4307--4319, 2021.
			
			\bibitem{Coifman-Meyer}
			R. R. Coifman and Yves Meyer, 
			\newblock{\em Nonlinear Harmonic Analysis, Operator Theory and P.D.E.}. 
			Annals of Mathematics Studies, 1986.
			
			\bibitem{Gibbons_Thornhill_Wardrop_Haar_1977}
			J. Gibbons, S. G. Thornhill, M. J. Wardrop, and D. Ter Haar, 
			\newblock{\em  On the theory of Langmuir solitons}. 
			Journal of Plasma Physics, 17(2):153--170, 1977.
			
			\bibitem{Tsutsumi1997}
			J. Ginibre, Y. Tsutsumi, and G. Velo, 
			\newblock{\em On the Cauchy problem for the Zakharov system}. 
			J. Funct. Anal., 151(2):384--436, 1997.
			
			\bibitem{Gra-14}
			L. Grafakos, 
			\newblock{\em Modern fourier analysis}. 
			Graduate Texts in Mathematics, 3rd edn. Springer New York, 2014.

\bibitem{Hadac-Herr-Koch-Poincare}
M. Hadac, S. Herr, H. Koch,
\newblock{\em Well-posedness and scattering for the KP-II equation in a critical space}.
Ann. Inst. H. Poincar\'e Anal. Non Lin\'eaire, 26(3):917--941, 2009.
	
			\bibitem{Kato-Ponce}
			T. Kato and G. Ponce, 
			\newblock{\em Commutator estimates and the Euler and Navier-Stokes equations}. 
			Comm. Pure Appl. Math., 41(7):891--907, 1988.
			
			\bibitem{Koch-Tataru2005-CPAM}
			H. Koch, D. Tataru,
			\newblock{\em Dispersive estimates for principally normal pseudodifferential operators}.
			Commun. Pure Appl. Math., 58(2):217--284, 2005.
		
			\bibitem{Koch-Tataru2005-Duke}
			H. Koch, D. Tataru,
			\newblock{\em Conserved energies for the cubic nonlinear Schr{\"o}dinger equation in one dimension}.
			Duke Math. J., 167(17):3207--3313, 2018.
				
				
		\bibitem{Koch-Tataru-Visan:Book}
				H. Koch, D. Tataru, M. Visan,
				\newblock{\em Dispersive Equations and Nonlinear Waves: Generalized Korteweg-De Vries, Nonlinear Schr{\"o}dinger, Wave and Schr{\"o}dinger Maps}.
				Oberwolfach Seminars, Vol. 45, Birkh{\"a}user/Springer, Basel, 2014.
		
			\bibitem{Tao-stri}
			M. Keel and T. Tao, 
			\newblock{\em Endpoint Strichartz estimates}. 
			Amer. J. Math., 120(5):955--980, 1998.
			
			\bibitem{Kening1995}
			C. Kenig, G. Ponce, and L. Vega, 
			\newblock{\em On the Zakharov and Zakharov-Schulman systems}. 
			Journal of
			Functional Analysis, 127(1):204--234, 1995.
			


						
						
					



			
\bibitem{Li-KatoPonce}
D. Li, 
\newblock{\em On Kato-Ponce and fractional Leibniz}. 
Rev. Mat. Iberoam., 35(1):23--100, 2019.
	
	
	\bibitem{Lu2025}
				Y. Lu and Z. Zheng, 
				\newblock{\em Optimal convergence rates of the Klein-Gordon-Zakharov system in the non-relativistic limit}. 
				J. Differential Equations, 431:Paper No. 113195, 52, 2025.
				
						
\bibitem{machihara2001nonrelativistic}
S. Machihara, 
\newblock{\em The nonrelativistic limit of the nonlinear Klein-Gordon equation}. 
Funkcialaj Ekvacioj Serio Internacia, 44(2):243--252, 2001.
			
\bibitem{machihara2002nonrelativistic}
S. Machihara, K. Nakanishi, and T. Ozawa, 
\newblock{\em Nonrelativistic limit in the energy space for nonlinear Klein-Gordon equations}. 
Mathematische Annalen, 322(3):603--621, 2002.
			
\bibitem{masmoudi2002nonlinear}
N. Masmoudi and K. Nakanishi, 
\newblock{\em From nonlinear Klein-Gordon equation to a system of coupled nonlinear Schr\"odinger equations}. 
Mathematische Annalen, 324:359--389, 2002.
			
	\bibitem{masmoudi2003nonrelativistic}
	N. Masmoudi and K. Nakanishi, 
	\newblock{\em Nonrelativistic limit from Maxwell-Klein-Gordon and Maxwell-Dirac to Poisson-Schr\"odinger}. 
	International Mathematics Research Notices, 2003(13):697--734, 2003.
			
	\bibitem{Masmoudi2005}
	N. Masmoudi and K. Nakanishi, 
	\newblock{\em From the Klein-Gordon-Zakharov system to the nonlinear
	Schr\"odinger equation}. 
	J. Hyperbolic Differ. Equ., 2(4):975--1008, 2005.
			
			\bibitem{Masmoudi2008}
			N. Masmoudi and K. Nakanishi, 
			\newblock{\em Energy convergence for singular limits of Zakharov type systems}. 
			Invent. Math., 172(3):535--583, 2008.
			
			\bibitem{Masmoudi2010}
			N. Masmoudi and K. Nakanishi, 
			\newblock{\em From the Klein-Gordon-Zakharov system to a singular nonlinear
				Schr\"odinger system}. 
			Ann. Inst. H. Poincaré C Anal. Non Linéaire, 27(4):1073--1096, 2010.
			
			\bibitem{Ozawa1992}
			T. Ozawa and Y. Tsutsumi, 
			\newblock{\em The nonlinear Schr\"odinger limit and the initial layer of the Zakharov
				equations}. 
			Differential Integral Equations, 5(4):721--745, 1992.
			
			\bibitem{Sanwal2022}
			A. Sanwal, 
			\newblock{\em Local well-posedness for the Zakharov system in dimension $d \le 3$}. 
			Discrete Contin.
			Dyn. Syst., 42(3):1067--1103, 2022.
			
			
			
			
			\bibitem{Weinstein1986}
			S. Schochet and M. Weinstein, 
			\newblock{\em The nonlinear Schrödinger limit of the Zakharov equations governing Langmuir turbulence}. 
			Comm. Math. Phys., 106(4):569--580, 1986.
			
			\bibitem{zakharov1972}
			V. Zakharov, 
			\newblock{\em  Collapse of Langmuir waves}. 
			Sov. Phys. JETP, 35(5):908--914, 1972.
			
			
		\end{thebibliography}
	\end{document}